\documentclass[twoside, 11pt]{article}
\usepackage{mathrsfs}
\usepackage{amssymb, amsmath, mathrsfs, amsthm}%, wasysym}
\usepackage{graphicx}
\usepackage{amsmath}
\usepackage{color}
\usepackage[top=2cm, bottom=2cm, left=2cm, right=2cm]{geometry}
\usepackage{float, caption}%, subcaption

\input{epsf}

\newcommand{\bd}{\begin{description}}
\newcommand{\ed}{\end{description}}
\newcommand{\bi}{\begin{itemize}}
\newcommand{\ei}{\end{itemize}}
\newcommand{\be}{\begin{enumerate}}
\newcommand{\ee}{\end{enumerate}}
\newcommand{\beq}{\begin{equation}}
\newcommand{\eeq}{\end{equation}}
\newcommand{\beqs}{\begin{eqnarray*}}
\newcommand{\eeqs}{\end{eqnarray*}}

\numberwithin{equation}{section}

\definecolor{DarkGreen}{rgb}{0.2, 0.6, 0.3}

\newcommand{\BB}{\mathcal{B}_2}
\newcommand{\Bn}{\mathcal{B}_n}
\usepackage{amsthm}

\newtheorem{theorem}{Theorem}[section]

\newtheorem{lemma}{Lemma}[section]
\newtheorem{definition}{Definition}
\newtheorem{corollary}[theorem]{Corollary}
\newtheorem{case}{Case}
\newtheorem{ccase}{Case}
\newtheorem{subcase}{Subcase}[case]

\newtheorem{claim}{Claim}[section]
\newtheorem{remark}{Remark}[section]

\newtheorem{proposition}[theorem]{Proposition}

\newtheorem{question}{Question}
\newtheorem{observation}{Observation}[section]
\begin{document}
\title{\textbf{Boolean lattice without small rainbow subposets} \footnote{
The work was supported by the Hungarian National
Research, Development and Innovation Office
NKFIH (No.~SSN135643 and K132696); the National Science Foundation of China
(Nos. 12471329, ~and~12061059); JSPS KAKENHI (No.~22K19773, and 23K03195)}}

\author{Gyula O.H. Katona \footnote{Alfr\'{e}d R\'{e}nyi Institute of Mathematics, Hungarian Academy of Sciences, Budapest Reaaltanoda utca 13-15, 1053, Hungary. {\tt
katona.gyula.oh@renyi.hu}}, \  Yaping Mao\footnote{Faculty of Environment and Information Sciences, Yokohama
National University, 79-2 Tokiwadai, Hodogaya-ku, Yokohama 240-8501, 
Japan. {\tt maoyaping@ymail.com; ozeki-kenta-xr@ynu.ac.jp}},  \  Kenta Ozeki
\footnotemark[3], \  Zhao
Wang\footnote{College of Science, China Jiliang University, Hangzhou
310018, China. {\tt
wangzhao@mail.bnu.edu.cn}},  \  Gang Yang
\footnote{Graduate School of Environment and Information Sciences, Yokohama
National University, 79-2 Tokiwadai, Hodogaya-ku, Yokohama 240-8501, 
Japan. {\tt
yang-gang-hn@ynu.jp}}}
\date{}
\maketitle

\begin{abstract}
A Boolean lattice $\mathcal{B}_n=(2^X, \leq)$ is the
power set of an $n$-element ground set $X$ equipped with inclusion relation.
For two posets $\mathcal{P}$ and $\mathcal{Q}$, we say that $\mathcal{Q}$ 
contains an \emph{induced copy} of $\mathcal{P}$ if there exists an injection 
$f : \mathcal{P} \to \mathcal{Q}$ such that $f(X) \le f(Y)$ if and only if 
$X \le Y$ in $\mathcal{P}$.
A $k$-coloring is exact if all colors are used at least once. For posets $\mathcal{Q}$ and $\mathcal{P}$, the \emph{Boolean Gallai-Ramsey
number} $\operatorname{GR}_{k}(\mathcal{Q}:\mathcal{P})$ is defined as the smallest $n$ such that any 
exact $k$-coloring of the sets in $\mathcal{B}_n$ contains either a rainbow induced copy of $\mathcal{Q}$ or
a monochromatic induced copy of $\mathcal{P}$ and the \emph{Boolean rainbow Ramsey
number} $\operatorname{RR}(\mathcal{Q}:\mathcal{P})$ is defined as the smallest $n$ such that any coloring of the sets in $\mathcal{B}_n$ contains either a rainbow induced copy of $\mathcal{Q}$ or
a monochromatic induced copy of $\mathcal{P}$. 

In this paper, we first study the structural properties of exact $k$-colorings of the sets in Boolean lattice without rainbow induced copy of small posets.
As the application of these results, we give exact values and some bounds of Boolean Gallai-Ramsey
numbers and Boolean rainbow Ramsey numbers, which improve a result of Chen, Cheng, Li, and Liu in 2020 and give an answer of a question proposed by Chang, Gerbner, Li, Methuku, Nagy, 
Patk\'{o}s, and Vizer in 2022. 
\\[2mm]
{\bf Keywords:} Boolean Gallai-Ramsey number; Boolean rainbow Ramsey number; Poset; Boolean lattice \\[2mm]
{\bf AMS subject classification 2020:} 05C55; 05D05; 05D10; 06A07.
\end{abstract}

\section{Research Background}
A \emph{partially ordered set}, or \emph{poset}, is a pair $(\mathcal{P}, \leq)$, where $\mathcal{P}$ is a family of sets and $\leq$ is a relation that is reflexive, antisymmetric, and transitive. The size of a poset $\mathcal{P}$ is denoted by $|\mathcal{P}|$.
For integers $m \le n$, we write $[m,n] = \{m, m+1, \dots, n\}$; in particular, we abbreviate $[1,n]$ to $[n]$.
The \emph{Boolean lattice of dimension $n$}, denoted $\mathcal{B}_n = (2^{[n]}, \leq)$, is the power set of an $n$-element ground set $[n]$ ordered by inclusion; note that $\mathcal{B}_0 = \{\emptyset\}$. A pair $X, Y \in \mathcal{P}$ is said to be \emph{comparable}, denoted $X \sim Y$, if $X \leq Y$ or $Y \leq X$. We write $X < Y$ if $X \leq Y$ and $X \neq Y$. If $X \not< Y$ and $Y \not< X$, then $X$ and $Y$ are \emph{incomparable}, denoted $X \nsim Y$.
A poset $\mathcal{C}_t$, written as $(X_1,X_2,\ldots,X_t)$ when its sets are specified, is called a \emph{$t$-chain} if any two sets in $\mathcal{C}_t$ are comparable.
Similarly, a poset $\mathcal{A}_t$ with $|\mathcal{A}_t| = t$ is called a
\emph{$t$-antichain} if any two distinct sets of $\mathcal{A}_t$ are incomparable.

For two posets $\mathcal{P}$ and $\mathcal{Q}$, a subset $\mathcal{X} \subseteq \mathcal{Q}$ is called a \emph{weak copy} of $\mathcal{P}$ if there exists an injection $f : \mathcal{P} \to \mathcal{X}$ such that $f(X) \le f(Y)$ in $\mathcal{Q}$ whenever $X \le Y$ in $\mathcal{P}$. If, in addition, we have $f(X) \le f(Y)$ in $\mathcal{Q}$ if and only if $X \le Y$ in $\mathcal{P}$, then $\mathcal{X}$ is called an \emph{induced} copy of $\mathcal{P}$. We say that $\mathcal{Q}$ \emph{contains} a weak copy (respectively, an induced copy) of $\mathcal{P}$ if it has a subset which is a weak copy (respectively, an induced copy) of $\mathcal{P}$. A colored poset $\mathcal{P}$ is \textit{monochromatic} if all of its sets
share the same color. A colored poset $\mathcal{P}$ is \textit{rainbow} if all of its sets have distinct colors. A $k$-coloring of $\mathcal{P}$ is called \emph{exact} if every color in $[k]$ is used at least once on $\mathcal{P}$.

Ramsey theory on posets was initiated by Ne\v{s}et\v{r}il and R\"{o}dl~\cite{NesetrilRodl}, focusing on induced copies of posets. Kierstead and Trotter~\cite{KiersteadTrotter} extended this line of research by considering arbitrary host posets beyond the Boolean lattice, and investigated Ramsey-type problems in terms of cardinality, height, and width. For further details and developments, we refer the reader to~\cite{AW23, BP2021, DKT91, GMT23, KiersteadTrotter, LT22, McColm, TrotterRamsey, Walzer15}.

In the setting of Boolean lattices, Axenovich and Walzer~\cite{AW17} initiated the study of 
Ramsey-type problems on $\mathcal{B}_n$ and introduced the corresponding Ramsey parameters.
Given a poset $\mathcal{P}$ and an integer $k \ge 2$, the \emph{Boolean Ramsey number}
$\operatorname{R}_k(\mathcal{P})$ is defined as the smallest integer $n$
such that any $k$-coloring of the sets of $\mathcal{B}_n$ contains a monochromatic induced copy of $\mathcal{P}$.
There is also a natural weak version. Following Cox and Stolee~\cite{CS18}, the
\emph{weak Boolean Ramsey number} $\operatorname{R}_k^{\text{w}}(\mathcal{P})$
is defined as the smallest integer $n$ such that any $k$-coloring of $\mathcal{B}_n$ contains a
monochromatic weak copy of $\mathcal{P}$.

Chen, Cheng, Li, and Liu~\cite{CCLL20} introduced the Boolean rainbow Ramsey numbers 
for posets. For posets $\mathcal{Q}$ and $\mathcal{P}$, the \emph{Boolean rainbow Ramsey 
number} $\operatorname{RR}(\mathcal{Q}:\mathcal{P})$ is defined as the smallest 
integer $n$ such that any coloring of the sets of $\mathcal{B}_n$ contains either a rainbow induced copy of $\mathcal{Q}$ or a
monochromatic induced copy of $\mathcal{P}$.
They determined the exact values of Boolean rainbow 
Ramsey numbers when both $\mathcal{P}$ and $\mathcal{Q}$ are antichains, Boolean lattices, 
or chains.
In analogy with the weak Ramsey numbers above, one may also consider a weak rainbow
version: for posets $\mathcal{Q}$ and $\mathcal{P}$, the \emph{weak Boolean rainbow
Ramsey number} $\operatorname{RR}^{\text{w}}(\mathcal{Q}:\mathcal{P})$ is the smallest integer
$n$ such that any coloring of $\mathcal{B}_n$ contains either a rainbow copy of $\mathcal{Q}$ or a monochromatic copy of
$\mathcal{P}$.
Weak rainbow Ramsey numbers and rainbow Ramsey numbers for Boolean lattices were systematically studied by
Chang, Gerbner, Li, Methuku, Nagy, Patk\'os, and Vizer~\cite{CGLMNPV22}.

The parameters $\operatorname{RR}(\mathcal{Q}:\mathcal{P})$ and 
$\operatorname{RR}^{\text{w}}(\mathcal{Q}:\mathcal{P})$ measure which monochromatic or rainbow 
configurations must appear in arbitrary colorings of $\mathcal{B}_n$. 
However, they do not explicitly capture the additional structural restrictions on colorings 
that arise when the number of colors is fixed and rainbow copies of a given pattern are forbidden. 
This motivates the introduction of a parameter that is sensitive to these restrictions on colorings.

Precisely this phenomenon has been systematically investigated in the graph setting under the 
name of Gallai-Ramsey theory. 
The Gallai-Ramsey theory for graphs goes back to Gallai's work ~\cite{Gallai} in 1967, 
where he observed that colorings of complete graphs avoiding a rainbow triangle 
exhibit a remarkably structured and somewhat surprising behavior. 
Given two graphs $G$ and $H$, the \emph{Gallai-Ramsey number} 
$\operatorname{gr}_k(G:H)$ is defined as the smallest integer $n$ such that for every 
$N \ge n$, any exact $k$-coloring of the edges of the complete graph on $N$ vertices 
contains either a rainbow copy of $G$ or a monochromatic copy of $H$. 
For a comprehensive and up-to-date survey of Gallai-Ramsey numbers, we refer the reader to~\cite{FMO14}. More recently, 
variants such as the integer Gallai-Ramsey theory and the Euclidean Gallai-Ramsey theory 
have been introduced and studied in~\cite{CX24, MORW23, MOW22, MP24}.

Guided by the Boolean rainbow Ramsey theory and the graph Gallai-Ramsey theory, we introduce the notion of Boolean Gallai-Ramsey numbers for posets.
Before giving the formal definition of the Boolean Gallai-Ramsey number, we introduce a term that streamlines later exposition.
Accordingly, we say that the pair of posets \((\mathcal{Q},\mathcal{P})\) is \emph{\((\mathcal{Q}:\mathcal{P})_{k}\)-good} if for every positive integer \(N\), every exact \(k\)-coloring of \(\mathcal{B}_N\) either contains a rainbow induced copy of \(\mathcal{Q}\) or contains a monochromatic induced copy of \(\mathcal{P}\).
In general, this property need not hold for all \(N\). We therefore introduce the Boolean Gallai-Ramsey number.

\begin{definition}
For posets \(\mathcal{Q},\mathcal{P}\), which are not $(\mathcal{Q},\mathcal{P})_k$-good, the \emph{Boolean Gallai-Ramsey number}
\(\operatorname{GR}_{k}(\mathcal{Q}:\mathcal{P})\)
is defined to be the smallest integer \(n\) such that for every \(N \ge n\), every exact \(k\)-coloring of \(\mathcal{B}_N\) contains either a rainbow induced copy of \(\mathcal{Q}\) or a monochromatic induced copy of \(\mathcal{P}\).
\end{definition}

One can similarly define weak Boolean Gallai-Ramsey numbers by replacing induced copies with
weak copies; in this paper we restrict attention to the induced version.

\subsection{Our results}
In this paper we obtain structural characterizations of exact colorings of Boolean
lattices that avoid rainbow induced copies of small posets, and we apply these
characterizations to both Gallai-Ramsey type problems and rainbow Ramsey
type problems for posets.

\medskip\noindent\textbf{(1) Structural characterizations.}
In Section~\ref{sec_2} we prove three structural results on exact $k$-colorings
of Boolean lattices that avoid rainbow induced copies of small posets.
More precisely, we obtain characterizations for colorings without rainbow
induced copies of the chain $\mathcal{C}_3$, the fork
$\vee_2 = (\{X_0,X_1,X_2\},\le)$ with $X_0 < X_1$, $X_0 < X_2$, and
$X_1 \nsim X_2$, and the $2$-dimensional Boolean lattice $\mathcal{B}_2$;
see Theorems~\ref{Structural-C3-1}, \ref{Structural-V2-1}, and
\ref{Structural-B2-1}. These structural descriptions are the main tools
for all of our applications.

\medskip\noindent\textbf{(2) Applications to Boolean Gallai-Ramsey numbers.}
In Section~\ref{sec_3} we apply the above structural theorems to determine or
bound Boolean Gallai-Ramsey numbers for several pairs of posets.

Our first Gallai-Ramsey result concerns chains of length $3$ and $s$,
based on the structural characterization given in Theorem~\ref{Structural-C3-1}.

For chains of lengths $3$ and $s$, we use the structural characterization given in
Theorem~\ref{Structural-C3-1} to obtain the following theorem.

\begin{theorem}\label{cBGR-c3}
Let $k, s$ be integers with $k\geq 3$ and $s\geq 3$. Then 
\begin{itemize}
    \item[] $(1)$ $\mathrm{GR}_{k}(\mathcal{C}_3:\mathcal{C}_s)=s$ if $3\leq k\leq{s-1\choose \lceil (s-1)/2\rceil}+1$.
    
    \item[] $(2)$ The pair of posets $\mathcal{C}_3, \mathcal{C}_s$ is $(\mathcal{C}_3:\mathcal{C}_s)_k$-good if $k>{s-1\choose \lceil (s-1)/2\rceil}+1$.
\end{itemize}
\end{theorem}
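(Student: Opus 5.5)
The plan is to prove everything from two elementary facts about an exact $k$-coloring $c$ of $\mathcal{B}_N$, $k\ge 3$, with no rainbow induced $\mathcal{C}_3$. This is presumably the content of Theorem~\ref{Structural-C3-1}, but I would re-derive it directly.

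\emph{Fact A.} $c(\emptyset)=c([N])$; call this color $1$. Otherwise some set $X$ carries a third color, and then $\emptyset<X<[N]$ is a rainbow $\mathcal{C}_3$.

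\emph{Fact B.} Two comparable sets $X<Y$ whose colors are both different from $1$ have the same color. Otherwise $\emptyset,X,Y$ or $X,Y,[N]$ is a rainbow $\mathcal{C}_3$; at least one of these triples consists of three distinct sets, because $X,Y\notin\{\emptyset,[N]\}$ by Fact A.

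Consequently, the sets of colors $2,\dots,k$ within any chain are all of one color. Moreover, one representative of each non-$1$ color gives an antichain of size $k-1$ in $\mathcal{B}_N\setminus\{\emptyset,[N]\}$.

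\textbf{Upper bound, $N\ge s$.} Suppose there is no monochromatic $\mathcal{C}_s$ of color $1$. Then every full chain of $\mathcal{B}_N$ has $N+1$ sets, of which at most $s-1$ have color $1$. So it contains at least $N+2-s\ge 2$ sets of non-$1$ colors, and by Fact B they all share one color; call it the color of the chain. Two full chains that differ in a single set (an adjacent transposition of the underlying permutation) share at least one of their non-$1$ sets, so they have the same color. The graph on full chains under such transpositions is connected, since adjacent transpositions generate $S_N$. Hence every non-$1$ set lies on some full chain and has this common color. This leaves only two colors, contradicting $k\ge3$. So every such coloring has a monochromatic $\mathcal{C}_s$, for all $N\ge s$ and all $k\ge 3$.

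\textbf{Part (2), $k>M+1$ with $M={s-1\choose\lceil (s-1)/2\rceil}$.} By the upper bound only $N\le s-1$ remains. There the antichain from Fact B has size $k-1>M\ge$ the width of $\mathcal{B}_N$ (Sperner), which is impossible. So no exact $k$-coloring of $\mathcal{B}_N$ avoids a rainbow $\mathcal{C}_3$, and the pair is good.

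\textbf{Lower bound in part (1), $3\le k\le M+1$.} I would exhibit an exact $k$-coloring of $\mathcal{B}_{s-1}$ with neither a rainbow $\mathcal{C}_3$ nor a monochromatic $\mathcal{C}_s$. Color all sets outside the middle layer $\binom{[s-1]}{\lceil (s-1)/2\rceil}$ with color $1$. Distribute colors $2,\dots,k$ onto the $M$ middle-layer sets so that each appears at least once, using color $1$ for any remaining middle sets; this is possible since $k-1\le M$.

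Any chain meets the middle layer at most once, so it contains at most one non-$1$ color and cannot be rainbow. The only $s$-chains in $\mathcal{B}_{s-1}$ are full chains. Each full chain passes through the middle layer, so we must also make sure it hits a non-$1$ set there. To guarantee this, I would instead give every middle set a non-$1$ color: assign colors $2,\dots,k$ with each used at least once, which is possible since $k-1\le M$ and $k-1\ge 2$. The middle layer is still an antichain, so no rainbow $\mathcal{C}_3$ arises, and now every full chain contains a non-$1$ set, so no monochromatic $\mathcal{C}_s$ exists either. Hence $\mathrm{GR}_k>s-1$, which together with the upper bound gives $\mathrm{GR}_k=s$.

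\textbf{Main obstacle.} The only nontrivial step is the connectivity argument in the upper bound. Its key point is that $N\ge s$ forces at least two non-$1$ sets per full chain, so that chains differing in a single set still overlap in a non-$1$ set.
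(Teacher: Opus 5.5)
Your proof is correct, and the upper bound takes a genuinely different route from the paper's. The rest matches the paper:
\begin{itemize}
\item Facts A and B are exactly Theorem~\ref{Structural-C3-1}.
\item Your treatment of $N\le s-1$ in part (2) is the paper's appeal to Corollary~\ref{cor:upper-bound-k}.
\item Your corrected lower-bound coloring of $\mathcal{B}_{s-1}$ is precisely the paper's coloring $c'$: every middle-layer set gets a color from $[2,k]$, each such color is used at least once, and all other sets get color $1$.
\end{itemize}
Do discard your first version of that construction. It fails because a full chain through a middle set of color $1$ is a monochromatic $\mathcal{C}_s$.

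The upper bound is where you differ. The paper proves Claim~\ref{claim-1} (at least three colors and no rainbow $\mathcal{C}_3$ force a monochromatic $\mathcal{C}_N$ in $\mathcal{B}_N$) by induction on $N$:
\begin{itemize}
\item It splits $\mathcal{B}_N$ into the subcubes $\mathcal{B}_{[\emptyset,[N-1]]}$ and $\mathcal{B}_{[\{N\},[N]]}$.
\item If a half carries three colors, it applies the induction hypothesis and Theorem~\ref{Structural-C3-1} to that half, then extends the resulting color-$1$ chain by $[N]$ or $\emptyset$.
\item Otherwise the halves are colored from $\{1,2\}$ and $\{1,3\}$. It then builds a color-$1$ chain from a full chain of $\mathcal{B}_{[\emptyset,X_0)}$ and one of $\mathcal{B}_{[X_0\cup\{N\},[N]]}$, where $X_0$ is a smallest set of color $2$.
\end{itemize}

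You argue directly instead. The count gives every maximal chain at least two non-$1$ sets, and Fact B gives each chain a single non-$1$ color. Maximal chains that differ by an adjacent transposition share all but one set, so they share a non-$1$ set. Since adjacent transpositions generate $S_N$, that common color spreads to every set, leaving only two colors.

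Your route is shorter. It has no base case (the paper only asserts $N=3$ as easy) and no case split. Taking $s=N$, it yields the paper's Claim~\ref{claim-1} verbatim. The paper's induction, by contrast, works purely by reapplying its structural theorem to subcubes and does not need the permutation-graph viewpoint.
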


Using Theorem~\ref{Structural-V2-1}, we obtain analogous results for the fork
$\vee_2$ versus chains.

\begin{theorem}\label{cBGR-v2}
Let $k, s$ be two integers with $k\geq 3$ and $s\geq 2$. Then 
\begin{itemize}
    \item[] $(1)$ $\mathrm{GR_{3}}(\vee_2:\mathcal{C}_s)=2s-1$.
    \item[] $(2)$ The pair of \(\vee_2\) and \(\mathcal{C}_s\) is \((\vee_2:\mathcal{C}_s)_4\)-good for \(s\in\{2,3\}\), and \(\operatorname{GR}_{4}(\vee_2:\mathcal{C}_s)=s\) for all \(s\ge 4\).
    \item[] $(3)$ The pair of $\vee_2, \mathcal{C}_s$ is $(\vee_2:\mathcal{C}_s)_k$-good if $k\geq 5$.
\end{itemize}
\end{theorem}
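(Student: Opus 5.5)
The plan is to prove all three parts from one observation, which should be essentially the content of Theorem~\ref{Structural-V2-1}. Fix an exact $k$-coloring $c$ of $\mathcal{B}_N$ with no rainbow induced $\vee_2$, and say $c(\emptyset)=1$. If $A\nsim B$, then neither is $\emptyset$, so $\emptyset<A$ and $\emptyset<B$. Hence $\{\emptyset,A,B\}$ is an induced $\vee_2$, and it is rainbow unless $c(A)=c(B)$ or $1\in\{c(A),c(B)\}$. Consequently, for any two distinct colors $i,j\neq 1$, every set of color $i$ is comparable with every set of color $j$. The same argument applied at any set $X_0$ in place of $\emptyset$ gives the local version: above $X_0$, every two incomparable sets whose colors differ from each other and from $c(X_0)$ are impossible. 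All three parts should follow by exploiting these cross-comparabilities.

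\textbf{Lower bound for (1).} Color $\mathcal{B}_{2s-2}$ by rank. Ranks $0,\dots,s-2$ get color $1$, ranks $s-1,\dots,2s-3$ get color $2$, and the top set $[2s-2]$ gets color $3$. Each color occupies at most $s-1$ ranks, so there is no monochromatic $\mathcal{C}_s$. The only color-$3$ set is comparable to everything, so it cannot be one of the two incomparable sets in a $\vee_2$. The two sets of an incomparable pair therefore have colors in $\{1,2\}$; if they differ, the bottom set also has a color in $\{1,2\}$, so the copy cannot be rainbow. This gives $\mathrm{GR}_3(\vee_2:\mathcal{C}_s)\ge 2s-1$. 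For every $N\ge 2s-1$, the same idea with rank thresholds shows that smaller $N$ cannot be forced.

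\textbf{Upper bound for (1).} Let $N\ge 2s-1$. Let $\mathcal{F}_2$ and $\mathcal{F}_3$ be the color classes $2$ and $3$; every member of $\mathcal{F}_2$ is comparable to every member of $\mathcal{F}_3$.
\begin{itemize}
\item Take a minimal $M\in\mathcal{F}_2\cup\mathcal{F}_3$, say $M\in\mathcal{F}_2$.
\item Using the cross-comparability together with the local version of the observation at $M$, show that $\mathcal{F}_3$ is either contained in a chain or lies entirely above or entirely below large parts of $\mathcal{F}_2$.
\item Deduce that there is a maximal chain $\mathcal{C}$ of $\mathcal{B}_N$ on which the colors appear in at most three consecutive blocks, with the color-$3$ block having size at most $1$ whenever color~$3$ sits at the top.
\end{itemize}
Since $|\mathcal{C}|=N+1\ge 2s$ and one block has size at most $1$, the other two blocks contain at least $2s-1$ sets between them, so one of them contains at least $s$ sets. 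That block is a monochromatic $\mathcal{C}_s$. I expect this step, turning pairwise cross-comparability into such a "block chain", to be the main obstacle. The first attempt will be induction on $N$, restricting to the subcubes $[\emptyset,[N]\setminus\{x\}]$ and $[\{x\},[N]]$.

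\textbf{Parts (2) and (3).} For $k=4$, the classes $\mathcal{F}_2,\mathcal{F}_3,\mathcal{F}_4$ are pairwise cross-comparable. Applying the local observation at a minimal set of each class should force all but one of these three classes to consist of sets that are comparable to everything around them. This would make the coloring rank-like, with color $1$ filling everything except a few comparable sets near the top or bottom. From this, a chain in color $1$ of length $N-O(1)$ exists. Checking the small cases gives goodness for $s\in\{2,3\}$ and $\mathrm{GR}_4=s$ for $s\ge4$; the lower bound comes from coloring $\mathcal{B}_{s-1}$ with a full chain of distinct colors at the ends. For $k\ge 5$, the same argument leaves too few "free" sets: four pairwise cross-comparable non-$1$ classes together with the local conditions should be incompatible with exactness. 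So either a rainbow $\vee_2$ always exists, or the coloring degenerates and the pair is good for every $N$.
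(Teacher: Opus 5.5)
Your lower-bound construction for (1) is correct and is the one the paper uses. However, none of the upper bounds is actually established. The step you call the main obstacle is the entire content of (1), and parts (2) and (3) rest only on ``should force'' and ``should be incompatible''.

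The intermediate target is also false as stated. For $N\ge 2s-1$, color the proper subsets of $[N]$ by the parity of their size (colors $1,2$) and give $[N]$ color $3$. Since $[N]$ is comparable to every set, it lies in no induced $\vee_2$, so there is no rainbow $\vee_2$. Yet along every maximal chain colors $1$ and $2$ alternate, so no maximal chain splits into three consecutive blocks. Counting color classes instead of blocks would repair the pigeonhole, but you would still need a structural reason why some color occurs at most once on some chain.

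In (2), a color-$1$ chain of length $N-O(1)$ only yields $\mathrm{GR}_4(\vee_2:\mathcal{C}_s)\le s+O(1)$. The exact value $s$, and the goodness for $s\in\{2,3\}$, need a monochromatic chain with exactly $N$ sets. Your lower-bound example for (2) is too vague to check, and no coloring determined by rank can work. An exact $4$-coloring by rank puts three distinct colors on ranks $r_1<r_2<r_3<N$. Taking $|X|=r_2$, $|Y|=r_3$, $|X\cap Y|=r_2-1$ and $Z\subseteq X\cap Y$ with $|Z|=r_1$ then gives a rainbow $\vee_2$. The paper instead colors $\{1\}$ with color $2$, $\mathcal{B}_{(\{1\},[s-1])}$ with color $3$, all sets avoiding $1$ with color $1$, and $[s-1]$ with color $4$.

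The missing idea is the content of Theorem~\ref{Structural-V2-1}. You mention it but then replace it by the far weaker cross-comparability observation; citing the theorem itself finishes all three parts in a few lines. Its proof runs as follows.
\begin{enumerate}
\item Suppose at least three colors occur on $\mathcal{B}_{[\emptyset,[N])}$, and let $A$ be a set of minimum size with $c(A)\neq c(\emptyset)$.
\item Lemma~\ref{lem:TL1} gives, for every $W\notin\mathcal{B}_{[\emptyset,A]}\cup\{[N]\}$, a set $W_0$ with $A<W_0<[N]$, $|W_0|=|A|+1$ and $W_0\nsim W$.
\item Testing the triples $\{\emptyset,W,W_0\}$ and $\{A,W,W_0\}$, with their two different bottoms, pins the colors down.
\item Two further colors below $[N]$ would produce level-$(|A|+1)$ witnesses $X_3',X_4'$ of both colors, and hence a rainbow $\{A,X_3',X_4'\}$. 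So $k\le 4$, which is (3).
\item The same witnesses force $\mathcal{B}_{(A,[N])}$ to be monochromatic in a third color, and every set not containing $A$ to have color $c(\emptyset)$.
\end{enumerate}
Then for any $a\in A$ the subcube $\mathcal{B}_{[\emptyset,[N]\setminus\{a\}]}$ is monochromatic and contains a chain of exactly $N$ sets. This gives the upper bounds in (1) and (2). For $k=4$ it is the only possible alternative, since two colors below $[N]$ allow at most three colors in total. If only two colors occur below $[N]$, plain pigeonhole on the $N\ge 2s-1$ sets of a maximal chain below $[N]$ gives $s$ sets of one color, with no need for consecutive blocks.
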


Our third Gallai-Ramsey result concerns Boolean lattices $\mathcal{B}_2$ and
$\mathcal{B}_n$ and relies on the structural description in Theorem~\ref{Structural-B2-1}.

\begin{theorem}\label{gr2m}
Let $k, n$ be two integers with $k\geq 4$ and $n\geq 1$. 
\begin{itemize}
    \item[] $(1)$ If $4\leq k\leq 2^{\mathrm{R}_3\left(\mathcal{B}_n\right)+n}$, then $\mathrm{GR}_k(\mathcal{B}_2:\mathcal{B}_n) \leq \mathrm{R}_3\left(\mathcal{B}_n\right)+n.$
    \item[] $(2)$ If $k>2^{\mathrm{R}_3\left(\mathcal{B}_n\right)+n}$, then the pair of posets $\mathcal{B}_2, \mathcal{B}_n$ is $(\mathcal{B}_2:\mathcal{B}_n)_k$-good.
\end{itemize} 
\end{theorem}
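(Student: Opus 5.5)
The plan is to reduce both parts to a single statement: \emph{for every $N\ge m:=\mathrm{R}_3(\mathcal{B}_n)+n$ and every $k\ge 4$, every exact $k$-coloring of $\mathcal{B}_N$ with no rainbow induced copy of $\mathcal{B}_2$ contains a monochromatic induced copy of $\mathcal{B}_n$.}

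This reduction is formal. For part (1), it says exactly that every $N\ge m$ has the required property, so $\mathrm{GR}_k(\mathcal{B}_2:\mathcal{B}_n)\le m$. For part (2), an exact $k$-coloring of $\mathcal{B}_N$ needs $2^N\ge k>2^{m}$, which forces $N>m$. So for $N\le m$ there is no exact $k$-coloring at all, and the condition holds vacuously. For $N>m$ the claim applies, so the pair is $(\mathcal{B}_2:\mathcal{B}_n)_k$-good.

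To prove the claim, I would invoke Theorem~\ref{Structural-B2-1} on a given exact $k$-coloring $c$ of $\mathcal{B}_N$ ($k\ge 4$) with no rainbow induced $\mathcal{B}_2$. I expect it to say that the colors split into a few ``dominant'' colors and ``rare'' colors, and that the rare colors are confined to sets which are forced by elements of a small set. The picture I have in mind is a coloring determined, up to at most three colors, by how a set meets a bounded part of the ground set; this is the Boolean analogue of a Gallai partition. The concrete goal is to find a set $Z\subseteq[N]$ with $|Z|\le n$ (we may pad it to exactly $n$) and a set $A\subseteq Z$ such that, writing $Y=[N]\setminus Z$ (so $|Y|\ge \mathrm{R}_3(\mathcal{B}_n)$), the interval
\[
\{A\cup S : S\subseteq Y\}\cong \mathcal{B}_{|Y|}
\]
is colored with at most three colors. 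Since $|Y|\ge \mathrm{R}_3(\mathcal{B}_n)$, the definition of $\mathrm{R}_3(\mathcal{B}_n)$ gives a monochromatic induced copy of $\mathcal{B}_n$ inside this interval. An induced copy inside an interval of $\mathcal{B}_N$ is still induced in $\mathcal{B}_N$, which finishes the proof. The ``$+n$'' in the bound is exactly the price of fixing the coordinates in $Z$.

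The main obstacle is extracting this $3$-colored interval from the structure theorem, i.e.\ showing that the ``extra'' colors can be avoided by fixing at most $n$ coordinates. I would build $Z$ greedily. While the current interval $[A,A\cup Y]$ still sees at least four colors, use the absence of a rainbow $\mathcal{B}_2$ (applied to quadruples $B\subset B\cup\{x\},B\cup\{y\}\subset B\cup\{x,y\}$ inside the interval) to find an element $x\in Y$ whose inclusion or exclusion kills at least one color. Then move $x$ into $Z$ and into $A$ or not accordingly. In the degenerate case where this would take more than $n$ steps, I would argue instead that some color class already contains a full interval of dimension $n$; this is again the kind of rigidity Theorem~\ref{Structural-B2-1} should provide. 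Such an interval is itself a monochromatic induced $\mathcal{B}_n$. If the structure theorem already isolates at most three colors on a large subcube directly, this step collapses and only the counting $|Y|\ge \mathrm{R}_3(\mathcal{B}_n)$ remains.
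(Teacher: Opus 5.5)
Your reduction of both parts to a single claim matches the paper. That includes the observation $2^N\ge k>2^m\Rightarrow N>m$ for part (2). Your target also has the right shape: a subcube of dimension at least $\mathrm{R}_3(\mathcal{B}_n)$ that uses at most three colors, or else a large monochromatic block.

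The gap is that the claim itself is never proved. You do not use what Theorem~\ref{Structural-B2-1} actually says, and the greedy step you put in its place has two problems.
\begin{itemize}
\item It is unjustified. You give no argument that an element $x$ whose fixing removes a color always exists. Also, squares $B\subset B\cup\{x\},B\cup\{y\}\subset B\cup\{x,y\}$ are not the copies that drive the classification; the paper uses copies such as $\{\emptyset,X,Y,[N]\}$.
\item It cannot succeed in general. Removing one color per step for at most $n$ steps handles only about $n+3$ colors, but when $c(\emptyset)=c([N])$ the number of colors is unbounded in $N$.
\end{itemize}

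Here is a concrete obstruction. Give each set of size $\lfloor N/2\rfloor$ its own color and every other set color $1$.
\begin{itemize}
\item The non-$1$ colors lie on an antichain, and any three sets of an induced $\mathcal{B}_2$ contain a comparable pair. So there is no rainbow $\mathcal{B}_2$.
\item Now take $N\ge 4n$, which holds because $\mathrm{R}_3(\mathcal{B}_n)\ge 3n$. Every interval $\{A\cup S: S\subseteq Y\}$ with $A\subseteq Z$, $|Z|\le n$, $Y=[N]\setminus Z$ satisfies $|A|<\lfloor N/2\rfloor<|A|+|Y|$.
\item Hence the interval contains at least $|Y|\ge 3$ middle-level sets, each with its own color, plus $A$ of color $1$. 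That is at least four colors.
\end{itemize}
So your concrete goal is unattainable for this coloring. The whole argument then rests on your fallback, ``some color class contains an interval of dimension $n$,'' which you only assert.

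What is needed is the case analysis the paper carries out.
\begin{itemize}
\item \emph{Case $c(\emptyset)\neq c([N])$.} In Types 1, 2, 3-1, 3-2 and 4-1, the sets not containing the nonempty set $X_0$ use at most three colors. Excluding one element $x\in X_0$ gives a $\le 3$-colored copy of $\mathcal{B}_{N-1}$, and $N-1\ge\mathrm{R}_3(\mathcal{B}_n)$. Type 4-2 is symmetric: include an element $y\notin Y_0$.
\item \emph{Case $c(\emptyset)=c([N])$ (Type 5) with a rainbow $4$-chain $(W,X,Y,[N])$.} By Type~5(1), the families $\mathcal{B}_{[\emptyset,W)}$, $\mathcal{B}_{(W,Y)}$ and $\mathcal{B}_{(Y,[N]]}$ are monochromatic. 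They contain intervals of total dimension $N-4$, so since $N\ge 4n$ one of them is large enough to contain $\mathcal{B}_n$.
\item \emph{Case $c(\emptyset)=c([N])$ with no rainbow $4$-chain.} Take $W$ of minimum size with $c(W)\neq c(\emptyset)$. Then all of $\mathcal{B}_{[\emptyset,W)}$ has color $c(\emptyset)=c([N])$, and Type~5(2) limits $\mathcal{B}_{[W,[N]]}$ to at most three colors.
  \begin{itemize}
  \item If $|W|\ge n$, then $\mathcal{B}_{[\emptyset,W)}\cup\{[N]\}$ contains a monochromatic induced $\mathcal{B}_n$. When $|W|=n$ this copy is not an interval, so your ``full interval'' formulation is slightly too narrow.
  \item If $|W|<n$, the interval $\mathcal{B}_{[W,[N]]}$ uses at most three colors and has dimension at least $\mathrm{R}_3(\mathcal{B}_n)$.
  \end{itemize}
\end{itemize}
This is exactly your dichotomy, but each branch comes from a specific structural fact rather than from a color-by-color greedy procedure.
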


\medskip\noindent\textbf{(3) Applications to Boolean rainbow Ramsey numbers.}
In Section~\ref{sec_4} we study rainbow Ramsey numbers for Boolean lattices.

Our first result considers rainbow copies of Boolean lattices.
Motivated by the Blob Lemma of Axenovich and Walzer (\cite{AW17}, Lemma~3), we establish a sufficient condition under which an exact \(k\)-coloring of \(\mathcal{B}_N\) contains no rainbow induced copy of \(\mathcal{B}_m\), and we use it to obtain the following upper bound.

\begin{theorem}\label{rainbow-Boolean-m-n}
Let $m, n$ be positive integers. Then 
$\operatorname{RR}(\mathcal{B}_m:\mathcal{B}_{n})\leq m\,\operatorname{R}_{2^m-1}(\mathcal{B}_{n})+m$.
\end{theorem}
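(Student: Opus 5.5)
Set $r=\operatorname{R}_{2^m-1}(\mathcal{B}_n)$ and $N=mr+m=m(r+1)$. Take an arbitrary coloring $c$ of $\mathcal{B}_N$ with no monochromatic induced copy of $\mathcal{B}_n$; we will produce a rainbow induced copy of $\mathcal{B}_m$. The idea is a blob decomposition in the spirit of the Blob Lemma of Axenovich and Walzer. Partition the ground set $[N]$ into $m$ blocks $Y_1,\dots,Y_m$, each of size $r+1$. For $I\subseteq[m]$ let $Y_I=\bigcup_{i\in I}Y_i$. Consider families of the form $\{Y_I\cup Z: Z\subseteq W\}$ with $W$ a small set, which behave like copies of $\mathcal{B}_m$ once each $Y_I$ is perturbed.

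\textbf{Building the copy greedily.} We construct an injective map $I\mapsto S_I$, for $I\subseteq[m]$, such that three conditions hold:
\begin{itemize}
\item[] (a) $S_I\cap Y_i$ is nonempty exactly when $i\in I$;
\item[] (b) $S_I\cap Y_i=Y_i$ for $i\in I$ minus a controlled part;
\item[] (c) the colors $c(S_I)$ are pairwise distinct.
\end{itemize}
The induced structure is easiest to guarantee with "prefix" sets. Fix in each $Y_i$ a distinguished element $y_i$ and let $Y_i'=Y_i\setminus\{y_i\}$, so $|Y_i'|=r$. For each $I$, choose $S_I=\{y_i:i\in I\}\cup\bigcup_{i\in I}T_i^{I}$ with $T_i^I\subseteq Y_i'$. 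Then $S_I\subseteq S_J$ forces $I\subseteq J$, because of the $y_i$. For the converse we need $T_i^I\subseteq T_i^J$ whenever $I\subseteq J$. This holds if each $T_i^I$ depends only on $i$ and on a rank parameter that is monotone in $I$.

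\textbf{Forcing distinct colors via Ramsey.} This is the main step and the expected obstacle. Process the sets $I$ in order of increasing size. When handling $I$, at most $2^m-1$ colors are already used by the previously chosen $S_J$. Consider the sublattice $\mathcal{L}_I=\{\{y_i:i\in I\}\cup\bigcup_{i\in I}T_i : T_i\subseteq Y_i'\}$. It contains an induced copy of $\mathcal{B}_r$, for instance by varying $T_{i_0}$ within $Y_{i_0}'$ for a fixed $i_0\in I$.

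If every set of this $\mathcal{B}_r$ received one of the at most $2^m-1$ forbidden colors, then by the definition of $\operatorname{R}_{2^m-1}(\mathcal{B}_n)$ there would be a monochromatic induced $\mathcal{B}_n$, a contradiction. Hence some member has a new color, and we take it as $S_I$.

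The difficulty is keeping the inclusions $S_I\subseteq S_J$ for $I\subseteq J$ when choices are made freely. To handle this, I would process each block separately: for each $i$, choose a chain-like nested sequence of subsets of $Y_i'$, one for each level. This yields an induced copy of $\mathcal{B}_r$ per block in which only the "top" sets matter. Concretely, let $S_J$ take $T_i=Y_i'$, that is, all of block $i$, for each $i\in J$ except for one free coordinate block, which is the one varied. Then $S_I\subseteq S_J$ holds automatically whenever $I\subsetneq J$ and the free block of $I$ lies in $I$.

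I expect the precise bookkeeping to need the extra $m$ elements, namely the $y_i$. These separate the blobs, while the $mr$ elements give each blob room for a $\mathcal{B}_r$ in which to escape the forbidden colors. Once all $S_I$ are chosen, they form an induced $\mathcal{B}_m$ that is rainbow, which proves $\operatorname{RR}(\mathcal{B}_m:\mathcal{B}_n)\le N$.
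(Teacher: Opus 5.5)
There is a genuine gap, at exactly the step you flagged: making the free choices compatible. Your greedy step is fine in isolation. When $S_I$ is chosen, at most $2^m-1$ colors are forbidden, so a candidate family that is a \emph{full} copy of $\mathcal{B}_r$ must contain a new color. But this needs, for every $I$, such a full copy all of whose members are compatible with the sets already chosen, and your prescription does not provide it.

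Write $f(J)\in J$ for the free block of $J$. Take $I\subsetneq J$ with $f(J)\in I$; such $I$ exist whenever $|J|\ge 2$, e.g.\ $I=\{f(J)\}$. Then $S_I\subseteq S_J$ requires $S_I\cap Y'_{f(J)}\subseteq T^J_{f(J)}$.
\begin{itemize}
\item[] If $f(I)=f(J)$, this forces $T^J_{f(J)}\supseteq T^I_{f(J)}$. The candidates for $S_J$ then form only the interval $\mathcal{B}_{[T^I_{f(J)},\,Y'_{f(J)}]}$, of dimension $r-|T^I_{f(J)}|$. You cannot control $|T^I_{f(J)}|$, so $\operatorname{R}_{2^m-1}(\mathcal{B}_n)$ no longer applies.
\item[] If $f(I)\neq f(J)$, then $S_I\supseteq Y'_{f(J)}$, which forces $T^J_{f(J)}=Y'_{f(J)}$, leaving no freedom at all.
\end{itemize}
The condition ``the free block of $I$ lies in $I$'' holds for every $I$, so it cannot be what makes the inclusions automatic.

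The root cause is your condition (a), that $S_I$ meets only the blocks $Y_i$ with $i\in I$. Already for $m=2$, the only full $\mathcal{B}_r$ of candidates of your form for $\{1\}$ is $\{y_1\}\cup 2^{Y_1'}$, whose top is $Y_1$; likewise for $\{2\}$. Hence a candidate family for $\{1,2\}$ fixed in advance and compatible with both collapses to $\{Y_1\cup Y_2\}$. This is a structural obstruction, not a bookkeeping issue.

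The missing idea is to index the free block by the \emph{level} $|I|$ rather than by an element of $I$, and to let $S_I$ contain elements of blocks indexed outside $I$. Your remark about a rank parameter monotone in $I$ points this way, but the rank has to select which block is free. Your element budget already suffices:
\begin{itemize}
\item[] keep $y_1,\dots,y_m$ to encode $I$, and rename $Y_1',\dots,Y_m'$ as level blocks $R_1,\dots,R_m$;
\item[] for $|I|=j\ge 1$, use the candidate family $\{\{y_i:i\in I\}\cup R_1\cup\dots\cup R_{j-1}\cup T : T\subseteq R_j\}$, a copy of $\mathcal{B}_r$ fixed in advance, and set $S_\emptyset=\emptyset$.
\end{itemize}
If $I\subsetneq J$, then $|I|\le |J|-1$. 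So every candidate for $I$ lies inside $\{y_i:i\in J\}\cup R_1\cup\dots\cup R_{|J|-1}$, which is contained in every candidate for $J$. If $I\not\subseteq J$, any $y_i$ with $i\in I\setminus J$ shows $S_I\not\subseteq S_J$. Hence any choice of one candidate per $I$ is an induced $\mathcal{B}_m$, and your greedy Ramsey step then goes through verbatim. This is precisely the paper's construction of the sublattices $X_{i_1,\dots,i_j}$.
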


{For comparison, we recall upper bounds due to Chen, Cheng, Li, and Liu~\cite{CCLL20}. They proved in~\cite[Theorem~14]{CCLL20} that \(\mathrm{RR}(\mathcal{B}_m:\mathcal{B}_n)\le \sum_{i=1}^{2^m-1}\mathrm{R}_i(\mathcal{B}_n)\). Moreover, by combining this with the bound \(\mathrm{R}_k(\mathcal{B}_m)\le 1000\,m^{7}16^{m}k\), they obtained \(\mathrm{RR}(\mathcal{B}_m:\mathcal{B}_n)<2^{2m+4n+9}n^7\); see~\cite[p.~11]{CCLL20}.

A sharper upper bound of $\mathrm{R}_k(\mathcal{B}_n)$ is available due to Cox and Stolee~\cite[Corollary~2.14]{CS18}, namely \(\mathrm{R}_k(\mathcal{B}_n)\le 2kn^2\). Combining this with \(\mathrm{RR}(\mathcal{B}_m:\mathcal{B}_n)\le \sum_{i=1}^{2^m-1}\mathrm{R}_i(\mathcal{B}_n)\) yields \(\mathrm{RR}(\mathcal{B}_m:\mathcal{B}_n)\le 2^m(2^m-1)n^2\).}
In contrast, combining Theorem~\ref{rainbow-Boolean-m-n} with $\mathrm{R}_k(\mathcal{B}_n)\le 2kn^2$ gives
$\mathrm{RR}(\mathcal{B}_m:\mathcal{B}_n)\le 2m(2^m-1)n^2 + m.$
Thus, while both bounds have the same leading term in $n$, our estimate improves
the dependence on $m$ from $2^m$ down to $2m$ for $m \ge 3$, and it is also
substantially smaller than the explicit bound
$2^{2m+4n+9}n^7$ from~\cite{CCLL20}.

For $m=2$, by combining Theorem~\ref{gr2m} with a trivial lower bound we obtain the second result for Boolean rainbow Ramsey numbers, which further sharpen the upper bound in Theorem~\ref{rainbow-Boolean-m-n}.
\begin{theorem}\label{th-Bm2}
For the Boolean lattice $\mathcal{B}_n$ with $n\geq 1$, we have
$\mathrm{R}_3\left(\mathcal{B}_n\right)\leq \mathrm{RR}(\mathcal{B}_2:\mathcal{B}_n) \leq \mathrm{R}_3\left(\mathcal{B}_n\right)+n.$
\end{theorem}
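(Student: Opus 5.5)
For the lower bound $\mathrm{R}_3(\mathcal{B}_n)\le \mathrm{RR}(\mathcal{B}_2:\mathcal{B}_n)$ we use a 3-coloring and observe that it cannot contain a rainbow $\mathcal{B}_2$. Let $N=\mathrm{R}_3(\mathcal{B}_n)-1$. By the definition of the Boolean Ramsey number, there is a $3$-coloring of $\mathcal{B}_N$ with no monochromatic induced copy of $\mathcal{B}_n$. An induced copy of $\mathcal{B}_2$ has four sets, so with only three colors it cannot be rainbow. Hence this coloring avoids both configurations, and $\mathrm{RR}(\mathcal{B}_2:\mathcal{B}_n)>N$.

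For the upper bound, set $M=\mathrm{R}_3(\mathcal{B}_n)+n$ and take an arbitrary coloring $c$ of $\mathcal{B}_M$. Suppose $c$ has no rainbow induced $\mathcal{B}_2$; we want a monochromatic induced $\mathcal{B}_n$. Let $k$ be the number of colors actually used, and view $c$ as an exact $k$-coloring. We split into cases according to $k$.

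\emph{Case $k\le 3$.} Since $M\ge \mathrm{R}_3(\mathcal{B}_n)$, we may restrict $c$ to a sub-Boolean lattice $\mathcal{B}_{\mathrm{R}_3(\mathcal{B}_n)}$, for instance $\{S: S\subseteq [\mathrm{R}_3(\mathcal{B}_n)]\}$. This restriction is a coloring with at most three colors, so it contains a monochromatic induced copy of $\mathcal{B}_n$.

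\emph{Case $4\le k\le 2^{M}$.} This range is exactly the hypothesis of Theorem~\ref{gr2m}(1), which gives $\mathrm{GR}_k(\mathcal{B}_2:\mathcal{B}_n)\le M$. By the definition of $\mathrm{GR}_k$, every exact $k$-coloring of $\mathcal{B}_N$ with $N\ge M$, in particular $N=M$, contains a rainbow induced $\mathcal{B}_2$ or a monochromatic induced $\mathcal{B}_n$. The former is excluded, so we obtain the latter.

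\emph{Case $k>2^M$.} This is impossible, because $\mathcal{B}_M$ has only $2^M$ elements.

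One definitional subtlety remains. If for some $k$ the pair is $(\mathcal{B}_2:\mathcal{B}_n)_k$-good, then $\mathrm{GR}_k$ is formally undefined. In that situation, however, the good property applies directly at $N=M$ and yields the same conclusion.

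The main obstacle is therefore entirely hidden in Theorem~\ref{gr2m}(1), which we take as given; the only care needed here is this case split on the number of colors actually used.
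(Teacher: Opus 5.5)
Your proof is correct and follows the paper's argument. The lower bound comes from a $3$-coloring of $\mathcal{B}_{\mathrm{R}_3(\mathcal{B}_n)-1}$, which cannot contain a rainbow $\mathcal{B}_2$, and the upper bound splits on whether at most three or at least four colors are used, invoking Theorem~\ref{gr2m} in the latter case. Your explicit checks that $k\le 2^M$ and that the ``good'' case is also covered are slightly more careful versions of what the paper leaves implicit.
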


{When \(m=2\), the bound of Chen, Cheng, Li, and Liu~\cite{CCLL20} specializes to
\(\mathrm{RR}(\mathcal{B}_2:\mathcal{B}_n)\le \sum_{i=1}^{3}\mathrm{R}_i(\mathcal{B}_n)
= n+\mathrm{R}_2(\mathcal{B}_n)+\mathrm{R}_3(\mathcal{B}_n)\).
In contrast, our upper bound eliminates the \(\mathrm{R}_2(\mathcal{B}_n)\) term.}

For a poset $\mathcal{P}$, a family $\mathcal{F} \subseteq \mathcal{B}_n$ is said to be induced $\mathcal{P}$-free if it does not contain an induced copy of $\mathcal{P}$.
The \emph{$k$-th level} of $\mathcal{B}_n$ is the collection of all $k$-element subsets of $[n]$, denoted by ${[n] \choose k}$.
Let $e(\mathcal{P})$ denote the maximum integer $m$ such that, for any $n$, the union of any $m$ consecutive levels of $\mathcal{B}_n$ is induced $\mathcal{P}$-free.
A common tool in studying Tur\'{a}n-type questions in posets is the Lubell function. For a family $\mathcal{F}\subseteq \mathcal{B}_n$, the \emph{Lubell function} of $\mathcal{F}$ is defined as
$\operatorname{lu}_n(\mathcal{F}) = \sum_{F\in \mathcal{F}} {n\choose |F|}^{-1}$.
For a poset $\mathcal{P}$, let $\operatorname{Lu}_n(\mathcal{P})$ denote the maximum value of $\operatorname{lu}_n(\mathcal{F})$ over all induced $\mathcal{P}$-free families $\mathcal{F} \subseteq \mathcal{B}_n$.
By the definitions of $e(\mathcal{P})$, we have
$e(\mathcal{P}) \leq \operatorname{Lu}_n(\mathcal{P})$ for every poset $\mathcal{P}$ and every integer $n \geq e(\mathcal{P})$.
We say that a poset $\mathcal{P}$ is  \emph{uniformly induced Lubell-bounded} if $e(\mathcal{P}) \geq \operatorname{Lu}_n(\mathcal{P})$ for all positive integers $n$. An example of posets satisfying the latter property is the chain $\mathcal{C}_\ell$.

Our third rainbow result concerns the poset $\vee_2$ and uses the
structure from Theorem~\ref{Structural-V2-1}.
\begin{theorem}\label{v2-p}
Let $\mathcal{P}$ be a uniformly induced Lubell-bounded poset, other than $\mathcal{C}_1$. Then
\[
\mathrm{RR}(\vee_2:\mathcal{P})=2e(\mathcal{P})+1.
\]
\end{theorem}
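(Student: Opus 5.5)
We prove the two inequalities $\mathrm{RR}(\vee_2:\mathcal{P})\ge 2e(\mathcal{P})+1$ and $\mathrm{RR}(\vee_2:\mathcal{P})\le 2e(\mathcal{P})+1$ separately.

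\textbf{Lower bound.} Write $e=e(\mathcal{P})$. We exhibit a coloring of $\mathcal{B}_{2e}$ with no rainbow induced $\vee_2$ and no monochromatic induced $\mathcal{P}$. Color each set by the block of consecutive levels it lies in: levels $0,\dots,e-1$ get one color per level? No, a cleaner scheme is needed.
\begin{itemize}
\item[] (a) Give all sets of size $i<e$ pairwise distinct colors, so each lower set is a singleton color class.
\item[] (b) Give all sets of size $\ge e$ (levels $e,\dots,2e$, i.e.\ $e+1$ levels) a single common color.
\end{itemize}
This exact scheme is wrong, because $e+1$ levels may contain $\mathcal{P}$. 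Instead we use the natural construction suggested by the $\vee_2$ structure:
\begin{itemize}
\item[] (a) Levels $e,\dots,2e-1$ get one common color $c$. This is a union of $e$ consecutive levels, so it is induced $\mathcal{P}$-free by the definition of $e$.
\item[] (b) The top set $[2e]$ also gets color $c$. Because $e$ counts levels regardless of $n$, we must check that adding one comparable top element does not create $\mathcal{P}$. This is delicate, so instead we use the second bullet: each set of size $<e$ gets its own distinct color.
\end{itemize}
Now we check that there is no rainbow $\vee_2$. Take $X_0<X_1,X_2$. If $X_1$ and $X_2$ are both in the top block, they share color $c$. Otherwise some $X_i$ with $i\in\{1,2\}$ has size $<e$, and then $X_0$ has size $<e$ as well. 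So we need all sets below level $e$ to be colored so that any fork there repeats a color. The fix is to color by levels from the top: $X\mapsto c$ if $|X|\ge e$, and $X\mapsto$ ``its own color'' otherwise fails.

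The correct approach is to go back to Theorem~\ref{Structural-V2-1}, whose extremal colorings presumably have the form: a down-set colored with one color and, above it, all sets colored arbitrarily but so that each set's up-set is monochromatic except for the set itself. Mirroring this, I color $\mathcal{B}_{2e}$ by $|X|\le e-1 \Rightarrow$ color $1$ and $|X|\ge e\Rightarrow$ color $2$. This gives two colors, each class a union of $e$ levels, except the top class, which has $e+1$ levels. So I would use $\mathcal{B}_{2e}$ with levels $0..e-1$ colored $1$, levels $e..2e-1$ colored $2$, and $[2e]$ colored $3$. A rainbow $\vee_2$ would need $X_1,X_2$ to have distinct colors, hence one of them is $[2e]$, which is comparable to the other, a contradiction. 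No monochromatic $\mathcal{P}$ appears, since $\mathcal{P}\ne\mathcal{C}_1$ and each class is either $e$ consecutive levels or a single set. This gives $\mathrm{RR}\ge 2e+1$.

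\textbf{Upper bound.} Take any coloring of $\mathcal{B}_N$ with $N=2e+1$ and no rainbow induced $\vee_2$. Apply Theorem~\ref{Structural-V2-1} to extract its structure. I expect this to give a maximal chain, or a partition into few monochromatic ``blocks'', in which some color class $\mathcal{F}$ is large in the Lubell sense. The goal is to show some color class has $\mathrm{lu}_N(\mathcal{F})>e$, which contradicts uniform induced Lubell-boundedness ($\mathrm{Lu}_N(\mathcal{P})\le e$) and forces a monochromatic $\mathcal{P}$. Since $\mathrm{lu}_N(\mathcal{B}_N)=N+1=2e+2$, it suffices to show that the colors other than the dominant one contribute Lubell mass less than $e+2$, or that two classes suffice. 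The Lubell mass is the average over maximal chains of the number of hits. So the key step, and the main obstacle, is a chain-by-chain argument. On a maximal chain $\emptyset=C_0<\dots<C_N$, no rainbow $\vee_2$ constrains the colors. If $C_i$ has color $a$ and $C_j$ with $j>i$ has color $b\ne a$, then every set above $C_i$ and incomparable to $C_j$ must have color $a$ or $b$. From this I would try to show that along each chain at most one set lies outside the two majority colors, and that globally only two colors (plus isolated singletons) occur. Then by pigeonhole on $2e+2$ chain positions, one class has Lubell value at least $e+1>e$ on average. That contradicts $\mathrm{Lu}_N(\mathcal{P})\le e$ unless the class contains $\mathcal{P}$.
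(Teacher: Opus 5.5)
Your lower bound is correct once the abandoned attempts are deleted. The construction is: on $\mathcal{B}_{2e}$, color levels $0,\dots,e-1$ with $1$, levels $e,\dots,2e-1$ with $2$, and $[2e]$ with $3$. The right justification is that $[2e]$ is comparable to every set, so it lies in no induced $\vee_2$, and hence every fork sees only colors $1,2$. (It is not enough that $X_1,X_2$ get different colors.) Each color class is $e$ consecutive levels or a singleton, so it is induced $\mathcal{P}$-free because $|\mathcal{P}|\ge 2$. This is the construction behind the general bound $e(\mathcal{P})(|\mathcal{Q}|-1)+g(\mathcal{Q})$ of Chang et al., which the paper simply quotes with $|\vee_2|=3$ and $g(\vee_2)=1$.

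The upper bound, however, is not proved. It is only a plan (``I expect'', ``I would try''), and its one concrete intermediate claim is false. You never state what Theorem~\ref{Structural-V2-1} gives. It gives one of two alternatives:
\begin{itemize}
\item[(1)] there is a nonempty $A$ (a smallest set whose color differs from $c(\emptyset)$) with $\mathcal{B}_N\setminus\mathcal{B}_{[A,[N]]}$, $\{A\}$, $\mathcal{B}_{(A,[N])}$ monochromatic in three distinct colors; or
\item[(2)] $\mathcal{B}_{[\emptyset,[N])}$ uses exactly two colors and $[N]$ gets a third.
\end{itemize}
In (1) with four colors, every maximal chain through $A$ meets both $A$ and $[N]$ outside the two large classes. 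So ``at most one set per chain outside the two majority colors'' fails. Even with one exceptional position per chain, as in (2) where it is $[N]$, averaging $2e+1$ positions over two colors gives only $e+\tfrac12$, not the $e+1$ you claim. Since only $>e$ is needed, a Lubell averaging can be salvaged, but only after invoking this dichotomy, and at that point the direct argument is shorter.

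The missing steps are these.
\begin{itemize}
\item[$\cdot$] If at most two colors occur, the classes have total Lubell mass $2e+2$ in $\mathcal{B}_N$. So one has mass at least $e+1>e\ge\mathrm{Lu}_N(\mathcal{P})$; equivalently, $N>\mathrm{R}_2(\mathcal{P})=2e$. You must handle this separately, since the structure theorem needs at least three colors.
\item[$\cdot$] In case (1), pick $a\in A$. Then $\mathcal{B}_{[\emptyset,[N]\setminus\{a\}]}\cong\mathcal{B}_{2e}$ misses $\mathcal{B}_{[A,[N]]}$, so it is monochromatic. Its Lubell mass is $2e+1>e\ge\mathrm{Lu}_{2e}(\mathcal{P})$, so it contains an induced $\mathcal{P}$.
\item[$\cdot$] In case (2), $\mathcal{B}_{[\emptyset,[N-1]]}\cong\mathcal{B}_{2e}$ is two-colored. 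So one class has Lubell mass at least $e+\tfrac12>e$ (this is $\mathrm{R}_2(\mathcal{P})=2e$), again giving a monochromatic induced $\mathcal{P}$.
\end{itemize}
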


Chang, Gerbner, Li, Methuku, Nagy, Patk\'os, and Vizer~\cite{CGLMNPV22} showed that for all posets \(\mathcal{P}\) and \(\mathcal{Q}\) one has the general lower bound
\(\operatorname{RR}(\mathcal{Q}:\mathcal{P}) \ge e(\mathcal{P})(|\mathcal{Q}| - 1) + g(\mathcal{Q})\),
where \(g(\mathcal{Q})\) is defined by \(g(\mathcal{Q}) = 0\) if \(\mathcal{Q}\) has both a maximum and a minimum element, \(g(\mathcal{Q}) = 2\) if \(\mathcal{Q}\) has neither a maximum nor a minimum element, and \(g(\mathcal{Q}) = 1\) otherwise. They proved that this bound is tight in the case \(\mathcal{Q}=\mathcal{A}_3\), provided that \(\mathcal{P}\) is uniformly induced Lubell-bounded and \(\mathcal{P}\notin\{\mathcal{C}_1,\mathcal{C}_2\}\). They also constructed pairs \((\mathcal{Q},\mathcal{P})\) for which the inequality is strict, for instance with \(\mathcal{P}=\mathcal{C}_2\) and \(\mathcal{Q}=\mathcal{A}_k\) for \(k\ge 2\). These results naturally led them to ask the following question.

\begin{question}[\cite{CGLMNPV22}]\label{q1}
For which uniformly induced Lubell-bounded posets $\mathcal{P}$ does the equality
\[
\operatorname{RR}(\mathcal{Q}:\mathcal{P})
= e(\mathcal{P})(|\mathcal{Q}| - 1) + g(\mathcal{Q})
\]
hold for every poset $\mathcal{Q}$?
\end{question}

Theorem \ref{v2-p} confirms the formula proposed in Question~\ref{q1} for the case
$\mathcal{Q}=\vee_2$ and all uniformly induced Lubell-bounded posets
$\mathcal{P}$, other than $\mathcal{C}_1$.

\section{Structure of colorings without small rainbow subposets}\label{sec_2}

In this section, we present several structural theorems for Boolean lattices that do not contain a rainbow induced copy of the subposets $\mathcal{C}_3$, $\vee_2$, and $\mathcal{B}_2$. 
We first introduce some notation. 
For sets $X,Y \in \mathcal{B}_n$ with $X \le Y$, we use the interval notation
$\mathcal{B}_{[X, Y]} = \{Z : X \le Z \le Y\}$, $\mathcal{B}_{(X, Y]} = \{Z : X < Z \le Y\}$, $\mathcal{B}_{[X, Y)} = \{Z : X \le Z < Y\}$, and $\mathcal{B}_{(X, Y)} = \{Z : X < Z < Y\}$.
Moreover, if $\mathcal{P}$ and $\mathcal{Q}$ are posets with $\mathcal{P}\subseteq \mathcal{Q}$, we write
$\mathcal{Q} \setminus \mathcal{P}$ for the poset obtained from $\mathcal{Q}$ by deleting all sets of $\mathcal{P}$.

\subsection{Characterization of exact $k$-colorings without a rainbow induced copy of $\mathcal{C}_3$}

\begin{theorem}\label{Structural-C3-1}
Let $k, n$ be two integers with $n\geq 2$ and $k\geq 3$. Consider an exact $k$-coloring $c$ of the sets in $\mathcal{B}_n$. Then there is no rainbow induced copy of $\mathcal{C}_3$ if and only if $c(\emptyset) = c([n])$ and for any two sets $X, Y\in \mathcal{B}_{(\emptyset, [n])}$, we have $X\nsim Y$ whenever $c(X), c(Y), c(\emptyset)$ are pairwise distinct.
\end{theorem}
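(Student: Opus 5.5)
We prove the two directions separately. Note first that an induced copy of $\mathcal{C}_3$ in $\mathcal{B}_n$ is simply a triple $X<Y<Z$ of distinct sets, so a rainbow $\mathcal{C}_3$ is a strictly increasing triple carrying three distinct colors.

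\emph{Sufficiency.} Assume $c(\emptyset)=c([n])=:\alpha$ and that any two sets of $\mathcal{B}_{(\emptyset,[n])}$ whose colors are distinct from each other and from $\alpha$ are incomparable. Take any chain $X<Y<Z$. If $X=\emptyset$ and $Z=[n]$, two of the three sets have color $\alpha$, so the chain is not rainbow. If exactly one endpoint is $\emptyset$ or $[n]$, say $X=\emptyset$ (the case $Z=[n]$ is symmetric), then $Y<Z$ both lie in $\mathcal{B}_{(\emptyset,[n])}$. A rainbow chain would force $c(Y),c(Z),\alpha$ pairwise distinct, contradicting the hypothesis since $Y\sim Z$. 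If all three sets are interior, a rainbow chain contains at most one set of color $\alpha$, so at least two of the three sets are comparable interior sets whose colors differ from each other and from $\alpha$. This again contradicts the hypothesis.

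\emph{Necessity.} Assume no rainbow $\mathcal{C}_3$ exists. The key step is to show $c(\emptyset)=c([n])$. Suppose instead $c(\emptyset)=a\neq b=c([n])$. Every interior set $Y$ forms the chain $\emptyset<Y<[n]$ (using $n\ge 2$ only to guarantee interior sets exist), so $c(Y)\in\{a,b\}$. Then all sets use only colors $a$ and $b$, contradicting exactness with $k\ge 3$. Hence $c(\emptyset)=c([n])=\alpha$.

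Now let $X,Y$ be interior with $c(X),c(Y),\alpha$ pairwise distinct, and suppose $X\sim Y$, say $X<Y$ (they are distinct because their colors differ). Then $\emptyset<X<Y$ is a rainbow $\mathcal{C}_3$, a contradiction. So $X\nsim Y$.

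The only nontrivial point is the exactness argument forcing $c(\emptyset)=c([n])$; everything else is direct case checking. This is also the one place where the hypotheses $k\ge3$ and $n\ge 2$ are used.
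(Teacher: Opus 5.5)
Your proof is correct and follows essentially the same route as the paper. For necessity, the chain $\emptyset<X<[n]$ together with exactness forces $c(\emptyset)=c([n])$, and the chain $\emptyset<X<Y$ then gives the incomparability condition; sufficiency is a short case check on whether the chain meets $\emptyset$ or $[n]$, and your case split there is slightly more explicit than the paper's about why $[n]$ causes no trouble.
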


\begin{proof}
We first show the ``if'' part. Suppose that $c(\emptyset)=c([n])$, and for any $X, Y\in \mathcal{B}_{(\emptyset, [n])}$ we have $X\nsim Y$ whenever $c(X), c(Y), c(\emptyset)$ are pairwise
distinct.  
Assume for contradiction that there exists a rainbow induced copy of $\mathcal{C}_3$, say $(X, Y, Z)$.  
If $\emptyset\notin \{X, Y, Z\}$, then there exist two sets in $\{X, Y, Z\}$, say $X$ and $Y$, such that $c(X)\neq c(\emptyset)$ and $c(Y)\neq c(\emptyset)$. 
Since $c(X)\neq c(Y)$, it follows that $X\nsim Y$, which contradicts the fact that $X\sim Y$ in the rainbow induced copy of $\mathcal{C}_3$.  
If $\emptyset\in \{X, Y, Z\}$, then assume without loss of generality that $Z=\emptyset$. Since $(\emptyset, X, Y)$ is a rainbow induced copy of $\mathcal{C}_3$, it follows that $c(X), c(Y)$, and $c(\emptyset)$ are pairwise distinct, and hence $X\nsim Y$, a contradiction.  
Thus, no rainbow induced copy of $\mathcal{C}_3$ can exist.

We next prove the ``only if" part. Suppose that there is no rainbow induced copy of $\mathcal{C}_3$. Without loss of generality, assume $c(\emptyset)=1$. 
If $c([n])\neq c(\emptyset)$, then since $k\geq 3$, there exists a set $X$ with $\emptyset<X<[n]$ such that $c(X)\neq c(\emptyset)$ and $c(X)\neq c([n])$. This means that $(\emptyset, X, [n])$ is a rainbow induced copy of $\mathcal{C}_3$, which is a contradiction. Therefore, it must hold that $c(\emptyset) = c([n])$. 
Now, consider any two sets $X, Y\in \mathcal{B}_{(\emptyset, [n])}$ such that $c(X), c(Y)\in [2, k]$ and $c(X)\neq c(Y)$. Since $\{\emptyset, X, Y\}$ cannot form a rainbow induced copy of $\mathcal{C}_3$, it follows that $X\nsim Y$, completing the proof.
\end{proof}

Combining Theorem~\ref{Structural-C3-1} with the fact that the largest antichain in $\mathcal{B}_n$ has size $\binom{n}{\lceil n/2\rceil}$, we obtain the following corollary.

\begin{corollary}\label{cor:upper-bound-k}
Let $n\geq 2$ and $k\geq 3$. Consider an exact $k$-coloring of $\mathcal{B}_n$ with no rainbow induced copy of $\mathcal{C}_3$. Then $k\leq \binom{n}{\lceil n/2\rceil}+1$.
\end{corollary}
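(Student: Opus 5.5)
We combine Theorem~\ref{Structural-C3-1} with Sperner's theorem. Fix an exact $k$-coloring $c$ of $\mathcal{B}_n$ with no rainbow induced copy of $\mathcal{C}_3$. By the ``only if'' direction of Theorem~\ref{Structural-C3-1}, $c(\emptyset)=c([n])$. Relabel colors so that this common color is $1$. Since the coloring is exact, for each color $i\in[2,k]$ we can choose a set $X_i$ with $c(X_i)=i$. None of these sets is $\emptyset$ or $[n]$, because those two sets have color $1$. Hence every $X_i$ lies in $\mathcal{B}_{(\emptyset,[n])}$.

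Next we show the sets $X_2,\dots,X_k$ form an antichain. Take distinct $i,j\in[2,k]$. The colors $c(X_i)=i$, $c(X_j)=j$ and $c(\emptyset)=1$ are pairwise distinct, so the structural condition of Theorem~\ref{Structural-C3-1} gives $X_i\nsim X_j$. In particular the $X_i$ are pairwise distinct, since they carry different colors. So $\{X_2,\dots,X_k\}$ is an antichain of size $k-1$ in $\mathcal{B}_n$.

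By Sperner's theorem, every antichain in $\mathcal{B}_n$ has size at most $\binom{n}{\lfloor n/2\rfloor}=\binom{n}{\lceil n/2\rceil}$. Therefore $k-1\le\binom{n}{\lceil n/2\rceil}$, which is the claimed bound.

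No step here is a real obstacle, since the structural work is already done in Theorem~\ref{Structural-C3-1}. The only point needing care is that each representative $X_i$ is a proper nonempty set, so that the structural condition applies to it. This follows directly from $c(\emptyset)=c([n])=1$ together with $i\neq 1$.
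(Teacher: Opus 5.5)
Your proposal is correct and follows the paper's approach: the paper obtains this corollary by combining Theorem~\ref{Structural-C3-1} with Sperner's bound on antichains in $\mathcal{B}_n$. You pick one representative of each color other than $c(\emptyset)$, observe that these representatives form an antichain in $\mathcal{B}_{(\emptyset,[n])}$, and apply $\binom{n}{\lfloor n/2\rfloor}=\binom{n}{\lceil n/2\rceil}$, which writes out the details the paper leaves implicit.
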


\subsection{Characterization of exact $k$-colorings without a rainbow induced copy of $\vee_2$}
\begin{theorem}\label{Structural-V2-1}
Let $k, n$ be two integers with $k\geq 3$ and $n\geq 2$. Considering an exact $k$-coloring of $\mathcal{B}_n$, there is no rainbow induced copy of $\vee_2$ if and only if one of the following holds:
\begin{itemize}
    \item[$(1)$] There exists a set $A$ with $|A|\leq n-2$ such that
    all sets in the families $\mathcal{B}_{(A, [n])}, \mathcal{B}_n\setminus \mathcal{B}_{[A, [n]]}$, and $\{A\}$ are monochromatically colored so that no two sets from different families share the same color.
If $k=3$, then the color of $[n]$ is unrestricted; if $k=4$, then $[n]$ receives a color different from those in $\mathcal{B}_{[\emptyset, [n])}$.
    \item[$(2)$] Exactly two colors appear in $\mathcal{B}_{[\emptyset, [n])}$. The set $[n]$ receives a color, distinct from the two colors used in $\mathcal{B}_{[\emptyset, [n])}$.
\end{itemize}
\end{theorem}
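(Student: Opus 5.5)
The \emph{if} direction is a direct check, and the \emph{only if} direction will be a structural analysis anchored at the bottom set $\emptyset$. Throughout, note that in any induced copy $\{X_0<X_1,X_0<X_2\}$ of $\vee_2$ with $X_1\nsim X_2$, none of $X_0,X_1,X_2$ equals $[n]$. So a rainbow copy lives entirely in $\mathcal{B}_{[\emptyset,[n])}$, and the color of $[n]$ matters only for exactness.

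\emph{If direction.} In case $(2)$ every copy of $\vee_2$ sits in $\mathcal{B}_{[\emptyset,[n])}$, which carries two colors, so it cannot be rainbow. In case $(1)$ write $\mathcal{U}=\mathcal{B}_{(A,[n])}$ (color $a$), $\mathcal{L}=\mathcal{B}_n\setminus\mathcal{B}_{[A,[n]]}$ (color $b$), and $\{A\}$ (color $c$). Split according to where $X_0$ lies.
\begin{itemize}
\item If $X_0\in\mathcal{U}\cup\{A\}$, then $X_1,X_2\in\mathcal{U}$, so they share color $a$.
\item If $X_0\in\mathcal{L}$, a rainbow copy would need one of $X_1,X_2$ to be $A$ and the other to lie in $\mathcal{U}$. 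But every set of $\mathcal{U}$ contains $A$, contradicting $X_1\nsim X_2$.
\end{itemize}
The stated constraints on $c([n])$ for $k=3,4$ are exactly what is needed for exactness.

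\emph{Only if direction.} Let $k'$ be the number of colors on $\mathcal{B}_{[\emptyset,[n])}$. Since $k\ge 3$, we have $k'\ge 2$. If $k'=2$, exactness forces $[n]$ to carry a third color, which gives $(2)$. So I will assume $k'\ge 3$ and aim for $(1)$, running the following steps in order.
\begin{itemize}
\item[(i)] Apply the forbidden rainbow $\vee_2$ with $X_0=\emptyset$. Writing $b=c(\emptyset)$, any two proper nonempty sets whose colors are distinct and both differ from $b$ must be comparable.
\item[(ii)] Choose $A$ as a set of minimum size among proper sets whose color is not $b$. I expect that one can show $c(A)\neq b$ holds for $A$ alone among its color, and that all sets not above $A$ have color $b$. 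For the second claim, a set $Y\not\ge A$ with $c(Y)\notin\{b,c(A)\}$ would have to be comparable to $A$ by (i), hence strictly below $A$, contradicting minimality. A set $Y\not\ge A$ with $c(Y)=c(A)$ would be excluded by combining (i) with a third color present elsewhere, which exists since $k'\ge3$.
\item[(iii)] Show that $\mathcal{U}=\mathcal{B}_{(A,[n])}$ is monochromatic with a color $a\notin\{b,c(A)\}$. Apply the rainbow-free condition with $X_0=A$: any two incomparable sets in $\mathcal{U}$ whose colors differ from each other and from $c(A)$ are forbidden. Then use that the incomparability graph on $\mathcal{U}$ is connected when $|A|\le n-2$, together with (i), to force a single color. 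The bound $|A|\le n-2$ itself follows because $|A|=n-1$ would make $\mathcal{U}$ empty, leaving only two colors below $[n]$.
\item[(iv)] Finally, read off the condition on $[n]$ from exactness for $k=3$ and $k=4$. Observe that $k\ge5$ is impossible, since at most four colors can appear in this configuration.
\end{itemize}

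The main obstacle is steps (ii)–(iii): ruling out extra occurrences of $c(A)$, and establishing monochromaticity of $\mathcal{U}$ when colors could a priori alternate along chains. I would first try to handle this by exploiting that incomparable pairs can be chained within each level of the interval $[A,[n]]$, and by using sets from the appropriate level of $\mathcal{L}$ as alternative bottoms $X_0$.
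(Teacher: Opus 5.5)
Your ``if'' direction is correct and is the paper's argument. The set-up of your ``only if'' direction also matches the paper: anchor at $\emptyset$, derive condition (i), and take $A$ of minimum size with $c(A)\neq b$. Your first claim in (ii) is right too: every proper set colored outside $\{b,c(A)\}$ lies in $\mathcal{U}$, which also gives $|A|\le n-2$. The gap is that the two steps you flag as the main obstacle are the whole content of the proof and are left undone. Moreover, the mechanism you propose for the second claim of (ii) cannot work.

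Condition (i) together with a third color does not exclude a set $Y\nsim A$ with $c(Y)=c(A)$. Take $n=3$, color $\{1\}$ and $\{2\}$ with $\alpha$, color $\{1,2\}$ with $d$, and give every other set color $b$. Then (i) holds, $A=\{1\}$, and $Y=\{2\}$ has color $c(A)$. The coloring does contain the rainbow copy $\{\{1\},\{1,3\},\{1,2\}\}$, but its bottom is $A$, not $\emptyset$. So that claim can only be proved once you know the coloring of $\mathcal{U}$, i.e., (ii) must come after (iii).

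The missing idea in (iii) is to use the bottoms $\emptyset$ and $A$ simultaneously. If $U_1,U_2\in\mathcal{U}$ are incomparable with $c(U_1)\neq c(U_2)$, then both $\{\emptyset,U_1,U_2\}$ and $\{A,U_1,U_2\}$ are induced copies of $\vee_2$, so $\{c(U_1),c(U_2)\}=\{b,c(A)\}$. This alone does not force a single color, since any pattern in $b$ and $c(A)$ survives. But you have a seed: since $k'\ge 3$, your first claim gives $Z\in\mathcal{U}$ with $c(Z)=d\notin\{b,c(A)\}$. Every set of $\mathcal{U}$ incomparable to a $d$-colored set is then $d$-colored.

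The incomparability graph on $\mathcal{U}$ is connected. The sets $A\cup\{t\}$ with $t\notin A$ lie in $\mathcal{U}$ and are pairwise incomparable. Every $U\in\mathcal{U}$ is incomparable to $A\cup\{t\}$ for any $t\notin U$; this is Lemma~\ref{lem:TL1}. Hence $\mathcal{U}$ is entirely colored $d$. Since every color outside $\{b,c(A)\}$ lives in $\mathcal{U}$, you also get $k'=3$ for free. The paper instead proves $k'=3$ first, using two witnesses on level $|A|+1$, and then spreads color $3$ over that level and over $\mathcal{U}$.

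Only now does the second claim of (ii) follow. For $Y\nsim A$ with $c(Y)=c(A)$, Lemma~\ref{lem:TL1} gives $W_0=A\cup\{t\}\in\mathcal{U}$ with $W_0\nsim Y$, and $\{\emptyset,W_0,Y\}$ is rainbow. Neither chains inside $\mathcal{U}$ nor alternative bottoms from $\mathcal{L}$ are needed.
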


Note that at most $3$ colors can appear in $\mathcal{B}_{[\emptyset, [n])}$. Before proving Theorem \ref{Structural-V2-1}, we show the following lemma, which is useful for our proofs.

\begin{lemma}\label{lem:TL1}
Let $n$ be an integer with $n \geq 2$. Let $\emptyset\leq X< Z\leq [n]$ with $|Z|\geq |X|+2$. If $W\in \mathcal{B}_n$ satisfies $W\notin \mathcal{B}_{[\emptyset, X]}\cup \mathcal{B}_{[Z, [n]]}$, 
then there exists $W_0\in \mathcal{B}_{(X, Z)}$ with $|W_0|=|X|+1$ such that $W_0\nsim W$.
\end{lemma}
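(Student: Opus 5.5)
The plan is a direct case analysis on how $W$ sits relative to $X$ and $Z$. The sets $W_0$ of the required form are exactly $W_0 = X\cup\{x\}$ with $x\in Z\setminus X$; there are $|Z\setminus X|\ge 2$ of them, and each lies in $\mathcal{B}_{(X,Z)}$ because $|X|<|W_0|=|X|+1<|Z|$. We need one of these candidates that is neither contained in $W$ nor contains $W$. The hypothesis on $W$ says exactly two things: $W\not\le X$, so there is $w\in W\setminus X$; and $Z\not\le W$, so there is $z\in Z\setminus W$.

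\textbf{Case 1: $X\not\le W$.} Then no candidate $W_0\supseteq X$ is contained in $W$, so we only need $W\not\le W_0$. If $w\notin Z$, every candidate $W_0\subseteq Z$ misses $w$, so any candidate works. If $w\in Z\setminus X$, the only candidate that could contain $W$ is $X\cup\{w\}$, and this happens only if $W\setminus X=\{w\}$. Since $|Z\setminus X|\ge 2$, we choose $x\in Z\setminus X$ with $x\ne w$. Then $w\in W\setminus W_0$, so $W\not\le W_0$.

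\textbf{Case 2: $X\le W$.} Now $w\in W\setminus X$ and $z\in Z\setminus W$; in particular $z\notin X$. Take $W_0=X\cup\{z\}$. Since $z\notin W$, we have $W_0\not\le W$. For $W\not\le W_0$ we need $W\ne X$ and $W$ not of the form $X\cup\{z\}$. The first holds because $w\in W\setminus X$ forces $W\ne X$. If $W\le X\cup\{z\}$, then $W\setminus X\subseteq\{z\}$, but $z\notin W$, so $W\le X$, a contradiction. Hence $W_0\nsim W$.

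The only point needing care is the subcase in Case 1 where $W\setminus X=\{w\}\subseteq Z$. There the condition $|Z|\ge|X|+2$ is exactly what guarantees a second element of $Z\setminus X$ different from $w$, giving a valid choice of $W_0$.
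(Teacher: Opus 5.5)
Your proof is correct and follows essentially the same route as the paper. Both arguments are a direct element-chasing case analysis that always takes $W_0 = X\cup\{t\}$ for a suitable $t\in Z\setminus X$, using $|Z\setminus X|\ge 2$ to avoid the one bad choice. The only difference is that you split on whether $X\subseteq W$ (two cases) instead of the paper's three cases $X<W<Z$, $W\nsim X$, $W\nsim Z$, which also handles explicitly the $W\nsim Z$ case that the paper only sketches as ``the same argument.''
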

\begin{proof}
Since $W\notin \mathcal{B}_{[\emptyset, X]}\cup \mathcal{B}_{[Z, [n]]}$, at least one of $X<W<Z$, $W\nsim X$, or $W\nsim Z$ holds.
Suppose $X<W<Z$, then choose $a\in W\setminus X$ and $t\in Z\setminus W$.
As $X\subset W$, we have $Z\setminus W\subset Z\setminus X$, and hence $t\in Z\setminus X$.
Set $W_0=X\cup\{t\}\in \mathcal{B}_{(X, Z)}$.
Moreover, $t\in W_0\setminus W$, and thus $W_0\not\leq W$.
Also $a\notin W_0$ (since $a\notin X$ and $a\neq t$ because $t\notin W$), and hence $W\not\leq W_0$.
Therefore, $W_0\nsim W$, as required.

Suppose $W\nsim X$.    
Then there exist elements
$a\in W\setminus X$ and $b\in X\setminus W$.
If $(Z\setminus X)\setminus W\neq\emptyset$, then
pick $t\in (Z\setminus X)\setminus W$ and set
$W_0=X\cup\{t\}$.
Thus, $W_0\in \mathcal{B}_{(X, Z)}$.
Since $a\in W\setminus W_0$ and $t\in W_0\setminus W$, we have $W_0\nsim W$. 
If $(Z\setminus X)\setminus W=\emptyset$, then $Z\setminus X\subseteq W$.
Since $|Z\setminus X|\ge 2$, choose two distinct elements $t, s\in Z\setminus X$ and let $W_0=X\cup\{t\}$, and thus $W_0\in \mathcal{B}_{(X, Z)}$.
Now $s\in W\setminus W_0$ (since $s\neq t$ and $Z\setminus X\subseteq W$) and $b\in W_0\setminus W$ (because $b\in X\setminus W\subseteq W_0$), and hence $W_0\nsim W$.

Suppose $W\nsim Z$.  
Choose $a\in W\setminus Z$ and $b\in Z\setminus W$. By the same argument in the previous paragraph, we have $W_0\nsim W$.
\end{proof}

\begin{proof}[Proof of Theorem~\ref{Structural-V2-1}]
We first show the ``if'' part.
Suppose that an exact $k$-coloring $c$ of the sets in $\mathcal{B}_n$ satisfies $(1)$. Without loss of generality, let $c(\emptyset)=1$, $c(A)=2$,  and $c(X)=3$ for each $X\in \mathcal{B}_{(A, [n])}$, 
where $A$ is the set described in (1).
Since $[n]$ is comparable with every set in $\mathcal{B}_n$, the set $[n]$ cannot be contained in any induced copy of $\vee_2$. 
Suppose that there exists a rainbow induced copy of $\vee_2$ with colors $1, 2, 3$.
Since $A$ is the only set of color $2$, $A$ has to be contained in the rainbow induced copy of $\vee_2$.
Let $\{A, W, Y\}$ form a rainbow induced copy of $\vee_2$, where $c(W)=1$ and $c(Y)=3$.
Since the sets with color $3$ only appear in $\mathcal{B}_{(A, [n])}$, it follows that $A < Y$.
Thus, $A < W$.
However, this is impossible since the sets with color $1$ only appear in $\mathcal{B}_n\setminus \mathcal{B}_{[A, [n]]}$.
Therefore, there exists no rainbow induced copy of $\vee_2$.

In the case when an exact $k$-coloring $c$ of the sets in $\mathcal{B}_n$ satisfies $(2)$, exactly two colors appear in $\mathcal{B}_{[\emptyset, [n])}$. Recall that the set $[n]$ cannot be contained in any induced copy of $\vee_2$. Therefore, every induced copy of $\vee_2$ must be contained in $\mathcal{B}_{[\emptyset, [n])}$ and hence there exists no rainbow induced copy of $\vee_2$.

We next prove the ``only if'' part. Suppose that there is no rainbow induced copy of $\vee_2$. Let $k'$ be the number of colors used in $\mathcal{B}_{[\emptyset, [n])}$. If $k'=2$, then $(2)$ holds and we are done with this part. Therefore, we may assume that $k'\ge 3$.

{If \(n=2\), then since \(k'\ge 3\), there must exist a rainbow \(\vee_2\), a contradiction. Thus \(n\ge 3\).}
Without loss of generality, let $c(\emptyset)=1$. Moreover, every set in $\mathcal{B}_{[\emptyset,[n])}$ is assigned a color from $[k']$. Let $X, Y\in \mathcal{B}_{(\emptyset, [n])}$ be sets with $c(X), c(Y)\in [2, k']$ and $c(X)\neq c(Y)$. If $X\nsim Y$, then the triple $\{\emptyset, X, Y\}$ forms a rainbow induced copy of $\vee_2$ whose three colors are $1$, $c(X)$, and $c(Y)$, a contradiction. Therefore for any such $X$ and $Y$, we must have $X\sim Y$.

Let $A\in \mathcal{B}_{[\emptyset, [n])}$ be a set of minimum size such that $c(A)\neq 1$. By relabeling colors if necessary, we may and do assume that $c(A)=2$. By the minimality of $A$, for every set $X$ with $\emptyset\le X<A$ we have $c(X)=1$. Since $k'\ge 3$, there exist sets $X_3, \dots, X_{k'}\in \mathcal{B}_{[\emptyset, [n])}$ such that $c(X_i)=i$ for every $i$ with $3\le i\le k'$. From the conclusion of the previous paragraph, for all distinct indices $i$ and $j$ with $3\le i\neq j\le k'$ we have $X_i\sim X_j$, and for every $i$ with $3\le i\le k'$ we have $A< X_i$. By renaming these sets if necessary, we may assume that $A<X_3<\cdots<X_{k'}<[n]$.

We now show that $k'=3$. Suppose $k'\geq 4$.  Since $A<X_3<X_4<[n]$, it follows from Lemma \ref{lem:TL1} with $X=A$, $Z=[n]$, and $W\in\{X_3, X_4\}$ that there exist $X'_3$, $X'_4\in \mathcal{B}_{(A, [n])}$ with $|X'_3|=|X'_4|=|A|+1$ such that $X'_3\nsim X_3$ and $X'_4\nsim X_4$.
Since $c(\emptyset)=1$, $c(A)=2$, and neither $\{\emptyset, X_i, X'_i\}$ nor $\{A, X_i, X'_i\}$ forms a rainbow induced copy of $\vee_2$, 
we have $c(X'_i)=i$ for $i=3, 4$, and hence $X'_3\neq X'_4$. Note that $X'_3, X'_4\in \mathcal{B}_{(A, [n])}$ and $|X'_3|=|X'_4|$. Therefore, $\{A, X'_3, X'_4\}$ forms a rainbow induced copy of $\vee_2$, a contradiction. Thus, $k'\leq 3$, and hence $k'=3$.

Consequently, there exists a set $X_3$ such that $A<X_3<[n]$ and $c(X_3)=3$. By the fact obtained in the previous paragraph, there is a set $X'_3\in \mathcal{B}_{(A, [n])}$ with $|X'_3|=|A|+1$ such that $c(X'_3)=3$. 
For each $W\in \mathcal{B}_{(A, [n])}$ with $|W|=|A|+1$ and $W\neq X'_3$, since neither $\{\emptyset, W, X'_3\}$ nor $\{A, W, X'_3\}$ forms a rainbow induced copy of $\vee_2$, we have $c(W)=3$. 
We then claim that $c(Y)=3$ for each $Y \in \mathcal{B}_{(A, [n])}$. Let $Y\in \mathcal{B}_{(A, [n])}$. By Lemma \ref{lem:TL1} with
$X=A$, $Z=[n]$, and $W=Y$, there exists $W_0\in \mathcal{B}_{(A, [n])}$ with $|W_0|=|A|+1$ such that $Y\nsim W_0$.
Since neither $\{\emptyset, Y, W_0\}$ nor $\{A, Y, W_0\}$ forms a rainbow induced copy of $\vee_2$, 
we have $c(Y)=c(W_0)=3$, as claimed.

In the end, we prove that $c(Y)=1$ for each $Y\in \mathcal{B}_n\setminus \mathcal{B}_{[A, [n]]}$.
Let $Y\in \mathcal{B}_n\setminus \mathcal{B}_{[A, [n]]}$.
If $Y \in \mathcal{B}_{[\emptyset, A)}$, 
then it follows from the choice of $A$ that $c(Y) = 1$.
Thus, we may assume $Y \notin \mathcal{B}_{[\emptyset, A)}$, 
and hence $Y \nsim A$.
{By Lemma \ref{lem:TL1} with
$X=A$, $Z=[n]$, and $W=Y$, there exists $W_0\in \mathcal{B}_{(A, [n])}$ with $|W_0|=|A|+1$ such that $Y\nsim W_0$.}
If $c(Y) \neq 1, 2$, then $\{\emptyset, A, Y\}$ forms a rainbow induced copy of $\vee_2$, a contradiction.
{If $c(Y)=2$, then then $\{\emptyset, W_0, Y\}$ forms a rainbow induced copy of $\vee_2$, a contradiction.}

Therefore, $c(X)=1$ for each $X\in \mathcal{B}_n\setminus \mathcal{B}_{[A, [n]]}$.
Then $(1)$ holds. 
This completes the proof of the only if part.
\end{proof}

%%%%%%%%%%%%%%%%%%%%%%%%%%%%%%%%%%%%%%%%%%%%%%%%%%%%%%%%%%%%%%%%%%%%%%%%%%%
\subsection{An exact $5$-coloring $c$ without a rainbow induced copy of $\mathcal{B}_2$ with $c(\emptyset)\neq c([n])$}
%%%%%%%%%%%%%%%%%%%%%%%%%%%%%%%%%%%%%%%%%%%%%%%%%%%%%%%%%%%%%%%%%%%%%%%%%%%

We now turn to a characterization of  coloring of $\mathcal{B}_n$ without a rainbow induced copy of $\mathcal{B}_2$.
{We first consider the case \(c(\emptyset)\neq c([n])\). In this section, we show that when the number of colors $k$ satisfies \(k\ge 5\), the only possible coloring is of Type~1, and in fact one must have \(k=5\). In Section \ref{subsection24}, we then treat the case \(k=4\).
Next, in Section \ref{subsection25} we consider the case \(c(\emptyset)=c([n])\).
Finally, we  state the complete characterization as Theorem \ref{Structural-B2-1} in Section \ref{subsection26}.}

{We first introduce a basic exact \(5\)-coloring of \(\mathcal{B}_n\) of \emph{Type~1} as follows.}

\begin{itemize}
\item
%[$(1)$] 
\emph{Type $1$}: There exist two sets $X_0, Y_0 \in \Bn$ with $\emptyset < X_0 < Y_0 < [n]$ and $|Y_0| \geq |X_0| + 2$
such that all sets in the following families are monochromatically colored so that no two sets from different families share the same color:
$$
\mathcal{B}_{[\emptyset, Y_0]} \setminus \mathcal{B}_{[X_0, Y_0]}, \quad \{X_0\}, \quad \mathcal{B}_{(X_0, Y_0)}, \quad \{Y_0\}, \quad \mathcal{B}_{[X_0, [n]]} \setminus \mathcal{B}_{[X_0, Y_0]}.$$
Moreover, each set in $\mathcal{B}_{n} \setminus \left(\mathcal{B}_{[\emptyset, Y_0]}\cup \mathcal{B}_{[X_0, [n]]}\right)$ has the same color as the sets in the first or the last
family.
\end{itemize}

As we will explain below, 
it is actually the only exact $k$-coloring of $\Bn$ for $k \geq 5$
such that $\emptyset$ and $[n]$ are assigned different colors
and there is no rainbow induced copy of $\BB$.
Note that an exact $5$-coloring of Type 1
is symmetric by exchanging the roles of $X_0$ and $Y_0$, 
those of $\emptyset$ and $[n]$, and so on.
Both $\mathcal{B}_{[\emptyset, Y_0]}$ and $\mathcal{B}_{[X_0, [n]]}$ are colored by $4$ colors.
Assuming $|Y_0|\ge |X_0|+2$ ensures $\mathcal{B}_{(X_0, Y_0)}\neq\emptyset$, which will be used repeatedly below.

\begin{lemma}\label{k5}
Let $k, n$ be integers with $k\geq 5$ and $n\geq 4$. 
Consider an exact $k$-coloring $c$ of $\mathcal{B}_n$
with $c(\emptyset)\neq c([n])$.
%in which $\emptyset$ and $[n]$ are assigned different colors.
%where $n \geq 3$. 
Then there is no rainbow induced copy of $\mathcal{B}_2$ if and only if $k=5$ and the coloring $c$ is of Type 1.
\end{lemma}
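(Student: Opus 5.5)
We prove the two directions separately; the ``only if'' direction carries the real work.

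\emph{``If'' direction.} Suppose $c$ is of Type~1, with colors $1,\dots,5$ on the five families in the listed order. Every induced copy $\{P<Q,R<S\}$ of $\mathcal{B}_2$ (with $Q\nsim R$) has a minimum $P$ and a maximum $S$. A rainbow copy must use all of $P,Q,R,S$ with four distinct colors. Colors $2$ and $4$ appear only on the single sets $X_0$ and $Y_0$. We check the possible color $4$-subsets one at a time. Each such subset either contains $2$ or $4$, so the singleton must be one of $P,Q,R,S$, or it is exactly $\{1,3,5,\cdot\}$, which again forces $2$ or $4$.

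For instance, if $X_0$ occurs, it cannot be $Q$ or $R$. Every set of color $3$ or $5$ lies above $X_0$. Every set of color $1$ or $4$ that is incomparable to $X_0$ lies either in $\mathcal{B}_{[\emptyset,Y_0]}\setminus\mathcal{B}_{[X_0,Y_0]}$ or in the ``outside'' region. In each subcase we derive a contradiction from comparability: either $Q\nsim R$ fails, or the maximum would need a color only present below $Y_0$ while also lying above a set of color $5$. The outside sets have color $1$ or $5$, which forces them to play the role of an element comparable to the appropriate sets, and this is impossible. The whole verification is a finite case check, and symmetry between $(\emptyset,X_0)$ and $([n],Y_0)$ halves it.

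\emph{``Only if'' direction.} Assume $c(\emptyset)=1$, $c([n])=5$, and that there is no rainbow induced $\mathcal{B}_2$. For any incomparable pair $Q,R$ in $\mathcal{B}_{(\emptyset,[n])}$, the sets $\emptyset,Q,R,[n]$ form an induced $\mathcal{B}_2$. Hence for incomparable $Q,R$ we get $c(Q)=c(R)$ or $\{c(Q),c(R)\}\cap\{1,5\}\neq\emptyset$. Consequently, the sets in the middle whose colors lie outside $\{1,5\}$ form a family in which differently colored sets are comparable, exactly as in the proof of Theorem~\ref{Structural-V2-1}. Since $k\ge 5$, there are at least three such middle colors.

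Let $X_0$ be a set of minimum size with color not in $\{1,5\}$, and let $Y_0$ be a set of maximum size with color not in $\{1,5\}$. The next step is to show that the middle colors form a chain of color classes. We use Lemma~\ref{lem:TL1} repeatedly, as in Theorem~\ref{Structural-V2-1}: whenever a middle-colored $W$ lies strictly between two sets, Lemma~\ref{lem:TL1} produces an incomparable neighbor at the level just above the lower set. Applying the comparability constraint to $\{\emptyset,W_0,W,[n]\}$, and to copies such as $\{X_0,W_0,W,[n]\}$ or $\{\emptyset,W_0,W,Y_0\}$, forces $c(W_0)$ to be either $c(W)$ or one of the two extreme colors. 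From this we aim to show four things:
\begin{itemize}
\item $X_0$ and $Y_0$ are unique sets with unique colors, say $2$ and $4$, and $X_0<Y_0$;
\item every set of $\mathcal{B}_{(X_0,Y_0)}$ receives the same color $3$;
\item $k=5$;
\item $|Y_0|\ge|X_0|+2$.
\end{itemize}
The last item holds because otherwise no color $3$ set fits. The bound $k=5$ follows because a sixth color, taken together with colors $2,3,4$, would yield two incomparable differently colored middle sets, as in the argument giving $k'\le 3$ in Theorem~\ref{Structural-V2-1}.

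The last step is to show that $\mathcal{B}_{[\emptyset,Y_0]}\setminus\mathcal{B}_{[X_0,Y_0]}$ is colored $1$ and $\mathcal{B}_{[X_0,[n]]}\setminus\mathcal{B}_{[X_0,Y_0]}$ is colored $5$, and that the remaining sets are colored $1$ or $5$. For $W$ below $Y_0$ but not above $X_0$, we take $V\in\mathcal{B}_{(X_0,Y_0)}$ incomparable to $W$, which exists by Lemma~\ref{lem:TL1} applied with $X=X_0$ and $Z=Y_0$. The copy $\{\emptyset,W,V,Y_0\}$ has colors $1,c(W),3,4$, so $c(W)\in\{1,3,4\}$, and the middle-chain structure forces $c(W)=1$. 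The dual argument handles the sets above $X_0$.

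The main obstacle is the middle step: pinning down that colors $2$ and $4$ each occur exactly once, and that color $3$ fills the whole interval rather than only part of it. We expect this to need a careful case analysis on the levels $|X_0|+1$ and $|Y_0|-1$, using the assumption $n\ge 4$.
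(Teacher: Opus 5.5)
Your ``only if'' direction has a genuine gap exactly where you locate the main obstacle. Nothing in the proposal shows that $\mathcal{B}_{(X_0,Y_0)}$ is monochromatic or that colors $2$ and $4$ occur only once, yet $k=5$, $|Y_0|\ge |X_0|+2$ and your final step all depend on it. Two ingredients are missing.

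The first is a usable anchor. With $X_0,Y_0$ chosen by extremal size, you would still have to prove $c(X_0)\neq c(Y_0)$, $X_0<Y_0$, and that a third middle color lies between them. The paper instead takes any three sets of distinct colors outside $\{1,5\}$. By Lemma~\ref{lem-x-y} they are pairwise comparable, which gives a rainbow chain $\emptyset<Z_2<Z_3<Z_4<[n]$ colored $1,2,3,4,5$.

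The second is a two-frame argument. For $Y\in\mathcal{B}_{(Z_2,Z_4)}$ with $Y\nsim Z_3$, the outer copy $\{\emptyset,Y,Z_3,[n]\}$ forces $c(Y)\in\{1,3,5\}$, and the inner copy $\{Z_2,Y,Z_3,Z_4\}$ forces $c(Y)\in\{2,3,4\}$; hence $c(Y)=3$. You list related copies, but you draw only the weaker conclusion $c(W_0)\in\{c(W),1,5\}$, which can never exclude the extreme colors. If instead $Y\sim Z_3$, use the swap $Y'=(Z_3\setminus\{a\})\cup\{b\}$: take $a\in Y\setminus Z_2$, $b\in Z_4\setminus Z_3$ if $Y<Z_3$, and $a\in Z_3\setminus Z_2$, $b\in Z_4\setminus Y$ if $Y>Z_3$. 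Then $Y'$ lies in $\mathcal{B}_{(Z_2,Z_4)}$ and is incomparable to both $Y$ and $Z_3$. Hence $c(Y')=3$, and the same two frames with $Y'$ in place of $Z_3$ give $c(Y)=3$.

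Once the outer families are settled by the same two-frame method, every set receives a color in $[5]$. The uniqueness of colors $2$ and $4$ and the equality $k=5$ then come out at the end; you do not need to prove them first, and no level-by-level case analysis is needed.

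There are also two smaller errors.

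First, in your last step, Lemma~\ref{lem:TL1} with $X=X_0$, $Z=Y_0$ requires $W\notin\mathcal{B}_{[\emptyset,X_0]}$. For $W\in\mathcal{B}_{(\emptyset,X_0)}$, every set of $\mathcal{B}_{(X_0,Y_0)}$ lies above $W$, so the incomparable $V$ you invoke does not exist; the same happens dually for $W\in\mathcal{B}_{(Y_0,[n])}$. The paper treats this case separately. Pick distinct $u,w\in Y_0\setminus X_0$. The set $W\cup\{u\}$ is below $Y_0$ and incomparable to $X_0$, so it is colored $1$ by the case $W\nsim X_0$ that you already handle, while $X_0\cup\{w\}$ is colored $3$. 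The copies $\{W,X_0,W\cup\{u\},Y_0\}$ and $\{W,X_0\cup\{w\},W\cup\{u\},[n]\}$ then force $c(W)=1$.

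Second, in the ``if'' direction, your assertion that every set of color $3$ or $5$ lies above $X_0$ is false. Sets outside $\mathcal{B}_{[\emptyset,Y_0]}\cup\mathcal{B}_{[X_0,[n]]}$ may be colored $5$. The facts that actually drive the check are these: color-$1$ sets avoid $\mathcal{B}_{[X_0,[n]]}$, color-$5$ sets avoid $\mathcal{B}_{[\emptyset,Y_0]}$, and colors $2,3,4$ occur only in $\mathcal{B}_{[X_0,Y_0]}$. With these, a case split on the color of the minimum of the copy finishes quickly.
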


Before proving Lemma \ref{k5}, 
we show the following lemma, which is useful for our proofs.

\begin{lemma}\label{lem-x-y}
Let $k, n$ be two integers with $k\geq 4$ and $n\geq 2$. 
Consider an exact $k$-coloring $c$ of $\mathcal{B}_n$ with $c(\emptyset)\neq c([n])$.
If there is no rainbow induced copy of $\mathcal{B}_2$, 
then $X\sim Y$ for any sets $X$ and $Y$ with $c(X), c(Y)\notin \{c(\emptyset), c([n])\}$ and $c(X)\neq c(Y)$.  
\end{lemma}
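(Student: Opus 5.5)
The plan is a direct contradiction argument using the four-element configuration $\{\emptyset, X, Y, [n]\}$. Write $a=c(\emptyset)$ and $b=c([n])$, so $a\neq b$. Let $X,Y$ be sets with $c(X),c(Y)\notin\{a,b\}$ and $c(X)\neq c(Y)$. Then the four colors $a,b,c(X),c(Y)$ are pairwise distinct.

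First observe that neither $X$ nor $Y$ can equal $\emptyset$ or $[n]$, since their colors differ from $a$ and $b$. Hence $X,Y\in\mathcal{B}_{(\emptyset,[n])}$, and they are distinct because their colors differ.

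Now suppose, for contradiction, that $X\nsim Y$. Then $\emptyset<X<[n]$, $\emptyset<Y<[n]$, and $X,Y$ are incomparable. So the family $\{\emptyset,X,Y,[n]\}$ is an induced copy of $\mathcal{B}_2$: the map sending the bottom, the two middle elements and the top of $\mathcal{B}_2$ to $\emptyset$, $X$, $Y$, $[n]$ respectively preserves and reflects the order. Since its four colors are pairwise distinct, it is a rainbow induced copy of $\mathcal{B}_2$, contradicting the hypothesis. Therefore $X\sim Y$.

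There is no real obstacle here. The only points to check are that $X$ and $Y$ lie strictly between $\emptyset$ and $[n]$, and that incomparability of $X$ and $Y$ is exactly what makes the copy induced rather than merely weak; both follow immediately. The assumption $k\ge 4$ and $n\ge 2$ only ensures that the statement is not vacuous, and is not otherwise used.
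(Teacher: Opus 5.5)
Your proposal is correct and is essentially the paper's argument: the paper also takes the family $\{\emptyset, X, Y, [n]\}$ and observes that incomparability of $X$ and $Y$ would make it a rainbow induced copy of $\mathcal{B}_2$. The difference is only that you explicitly check that $X,Y$ are distinct and lie strictly between $\emptyset$ and $[n]$, which the paper leaves implicit.
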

\begin{proof}
If there exist two sets $X_0, Y_0$ with $c(X_0), c(Y_0)\notin \{c(\emptyset), c([n])\}$ and $c(X_0)\neq c(Y_0)$ such that $X_0\nsim Y_0$, then there exists a rainbow induced copy of $\mathcal{B}_2$ with family $\{\emptyset, X_0, Y_0, [n]\}$, a contradiction. Therefore, 
the conclusion holds.
\end{proof}

Now, we are ready to prove Lemma \ref{k5}.

\begin{proof}[Proof of Lemma \ref{k5}]
We first prove the ``if'' part.
Suppose that the coloring $c$ is of Type 1.
Without loss of generality, we may assume
$c(Z)=1$ for each $Z\in \mathcal{B}_{[\emptyset,Y_0]}\setminus\mathcal{B}_{[X_0,Y_0]}$,
$c(X_0)=2$,
$c(Z)=3$ for each $Z\in \mathcal{B}_{(X_0,Y_0)}$,
$c(Y_0)=4$,
$c(Z)=5$ for each $Z\in \mathcal{B}_{[X_0,[n]]}\setminus\mathcal{B}_{[X_0,Y_0]}$, and every set in $\mathcal{B}_{n} \setminus \left(\mathcal{B}_{[\emptyset, Y_0]}\cup \mathcal{B}_{[X_0, [n]]}\right)$ is colored $1$ or $5$.
Note that for each $Z \in \Bn$, if $c(Z) = 1$ then $Z \notin \mathcal{B}_{[X_0, [n]]}$, if $c(Z) = 5$ then
$Z \notin \mathcal{B}_{[\emptyset, Y_0]}$, and if $c(Z)\in \{2,3,4\}$ then
$Z \in \mathcal{B}_{[X_0, Y_0]}$.

Assume for contradiction that there exists a rainbow induced copy of $\mathcal{B}_2$
by the sets $W_1, W_2, W_3, W_4 \in \Bn$ with $W_1<W_2, W_3 <W_4$ and $W_2\nsim W_3$. 
Suppose first that $c(W_1)=1$. 
Since only $X_0$, the sets in $\mathcal{B}_{(X_0, Y_0)}$, and $Y_0$ are colored $2$, $3$, and $4$, respectively, and $W_2\nsim W_3$, one of $W_2$ and $W_3$ is colored $5$. Without loss of generality, assume $c(W_2) = 5$.
This means $W_2\in \mathcal{B}_n\setminus \mathcal{B}_{[\emptyset, Y_0]}$. 
Since $W_4 > W_2$ must satisfy $W_4 \in \mathcal{B}_n\setminus \mathcal{B}_{[\emptyset, Y_0]}$, 
which implies $c(W_4) \in \{1, 5\}$, 
a contradiction.

Suppose next that $c(W_1)=2$.
Then $W_1=X_0$ and $W_2, W_3, W_4\in \mathcal{B}_{(X_0, [n]]}$.
By the conditions on a coloring of Type 1, 
we have $\{c(W_2), c(W_3), c(W_4)\}= \{3, 4, 5\}$.
Since
the colors $3$ and $4$ appear only on the sets in $\mathcal{B}_{(X_0, Y_0)}$ and on $Y_0$, respectively, 
we have $W_4=Y_0$.
This implies $W_2, W_3 \in \mathcal{B}_{(X_0, Y_0)}$, 
and hence $c(W_2) = c(W_3) = 3$, a contradiction.

If $c(W_1)\in \{3, 4\}$, then $W_1\in \mathcal{B}_{(X_0, Y_0]}$, and hence $c(Z)\in \{3, 4, 5\}$ for each $Z\in \mathcal{B}_{(W_1, [n]]}$, and so $\{c(W_2), c(W_3), c(W_4)\}\subseteq \{3, 4, 5\}$, a contradition.
If $c(W_1)=5$, then there exists no set $Z$ with $Z>W_1$ such that $Z\in \mathcal{B}_{[X_0,Y_0]}$, and hence $c(Z)\in \{1, 5\}$ for each $Z\in \mathcal{B}_{(W_1, [n]]}$, and so $\{c(W_2), c(W_3),$ $c(W_4)\}\subseteq \{1, 5\}$, a contradition.
Thus, no rainbow induced copy of $\mathcal{B}_2$ exists.
\medskip

Next, we prove the ``only if'' part. Suppose that there is no rainbow induced copy of $\mathcal{B}_2$.
Since $k\geq 5$, it follows from Lemma \ref{lem-x-y} that there exists a rainbow $5$-chain $(\emptyset, Z_2, Z_3, Z_4, [n])$.
We may assume that $c(\emptyset) = 1$, $c(Z_i) = i$ for $i \in \{2, 3, 4\}$, and $c([n]) = 5$.
We will show that $c$ is of Type 1 with $X_0 = Z_2$ and $Y_0 = Z_4$. We first show two common claims for the colors of some sets.

\begin{claim}\label{cl-21}
$c(Y)=3$ for each $Y\in \mathcal{B}_{(Z_2, Z_4)}$.
\end{claim}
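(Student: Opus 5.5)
To prove Claim~\ref{cl-21}, we fix the rainbow chain $(\emptyset, Z_2, Z_3, Z_4, [n])$ with $c(\emptyset)=1$, $c(Z_i)=i$ for $i\in\{2,3,4\}$, and $c([n])=5$. Throughout we use two tools. The first is Lemma~\ref{lem-x-y}: two sets whose colors are distinct and both lie outside $\{1,5\}$ must be comparable. The second is the observation that whenever $Z_2<U,V<Z_4$ with $U\nsim V$, the family $\{Z_2,U,V,Z_4\}$ is an induced copy of $\mathcal{B}_2$. Hence, if $c(V)=3$, that copy is rainbow unless $c(U)\in\{2,3,4\}$.

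\emph{Step 1: sets incomparable to $Z_3$.} Let $Y\in\mathcal{B}_{(Z_2,Z_4)}$ with $Y\nsim Z_3$. Then $\{Z_2,Y,Z_3,Z_4\}$ is an induced $\mathcal{B}_2$ with colors $2,c(Y),3,4$. To avoid a rainbow copy we need $c(Y)\in\{2,3,4\}$. If $c(Y)\in\{2,4\}$, then $Y$ and $Z_3$ have distinct colors outside $\{1,5\}$ and are incomparable, contradicting Lemma~\ref{lem-x-y}. Hence $c(Y)=3$.

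\emph{Step 2: sets comparable to $Z_3$.} Let $Y\in\mathcal{B}_{(Z_2,Z_4)}$ with $Y\sim Z_3$ and $Y\neq Z_3$ (the case $Y=Z_3$ is trivial). The plan is to find $W\in\mathcal{B}_{(Z_2,Z_4)}$ incomparable to both $Y$ and $Z_3$. Then $c(W)=3$ by Step~1, and the argument of Step~1 applied to the copy $\{Z_2,Y,W,Z_4\}$, together with Lemma~\ref{lem-x-y} applied to the pair $Y,W$, gives $c(Y)=3$.

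\emph{Construction of $W$ when $Y<Z_3$.} Pick $a\in Y\setminus Z_2$ and $t\in Z_4\setminus Z_3$, and set $W=(Y\setminus\{a\})\cup\{t\}$. The required properties are checked as follows.
\begin{itemize}
\item $Z_2\le Y\setminus\{a\}$, and $t\notin Z_2$, so $Z_2<W$.
\item $W\subseteq Z_4$ and $a\in Z_4\setminus W$, so $W<Z_4$.
\item $t\in W\setminus Y$ and $a\in Y\setminus W$, so $W\nsim Y$.
\item Since $Y\subseteq Z_3$, we have $t\in W\setminus Z_3$ and $a\in Z_3\setminus W$, so $W\nsim Z_3$.
\end{itemize}

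\emph{The case $Z_3<Y$.} This follows symmetrically by taking complements, which reverses the order and exchanges the roles of $Z_2$ and $Z_4$. Concretely, pick $a\in Z_3\setminus Z_2$ and $t\in Z_4\setminus Y$, and take $W=(Y\setminus\{a\})\cup\{t\}$; the same four checks go through.

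The only real obstacle is this construction of $W$, which must be incomparable to both $Y$ and $Z_3$ while staying strictly inside the interval $(Z_2,Z_4)$. Once $W$ exists, the claim reduces to the two-line argument of Step~1.
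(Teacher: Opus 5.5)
Your proof is correct and follows the paper's argument essentially step for step. You first handle $Y\nsim Z_3$ via $\{Z_2,Y,Z_3,Z_4\}$ together with Lemma~\ref{lem-x-y}, whose proof is exactly the paper's quadruple $\{\emptyset,Y,Z_3,[n]\}$; then, for $Y\sim Z_3$, you build an auxiliary set in $\mathcal{B}_{(Z_2,Z_4)}$ incomparable to both $Y$ and $Z_3$, using the same element choices as the paper but perturbing $Y$ rather than $Z_3$ (the paper takes $(Z_3\setminus\{a\})\cup\{b\}$), and your phrasing ``non-rainbow forces $c(Y)\in\{2,3,4\}$'' also covers colors outside $[5]$, which the paper's case list leaves implicit.
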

\begin{proof}
Let $Y\in\mathcal{B}_{(Z_2, Z_4)}$. 
Suppose that $Y\nsim Z_3$. 
If $c(Y)\in\{2, 4\}$, then $\{\emptyset, Y, \, Z_3, \, [n]\}$ forms a rainbow induced copy of $\mathcal{B}_2$; if $c(Y)\in\{1, 5\}$ then $\{Z_2, \, Y, \, Z_3, \, Z_4\}$ forms a rainbow induced copy of $\mathcal{B}_2$, a contradiction. Therefore, $c(Y)=3$.

Suppose that $Y\sim Z_3$. Then 
either $Z_2< Y< Z_3$ or $Z_3< Y< Z_4$.
If $Z_2< Y< Z_3$, then pick $a\in Y\setminus Z_2$ and $b\in Z_4\setminus Z_3$, and define $Y'=(Z_3\setminus\{a\})\cup\{b\}$.
Therefore, $Y'\in \mathcal{B}_{(Z_2, Z_4)}$, $a\in Z_3\setminus Y'$, and $b\in Y'\setminus Z_3$, and hence $Y'\nsim Z_3$; also $a\in Y\setminus Y'$ and $b\in Y'\setminus Y$, and thus $Y'\nsim Y$.
If $Z_3< Y< Z_4$, pick $a\in Z_3\setminus Z_2$, $b\in Z_4\setminus Y$, and define $Y'$ as above. The same witnesses show $Y'\nsim Z_3$ and $Y'\nsim Y$.
By the previous paragraph, we have $c(Y')=3$.
If $c(Y)\in\{2, 4\}$, then $\{\emptyset, Y, Y', [n]\}$ forms a rainbow induced copy of $\mathcal{B}_2$; if $c(Y)\in\{1, 5\}$ then $\{Z_2, Y, Y', Z_4\}$ forms a rainbow induced copy of $\mathcal{B}_2$, a contradiction. Therefore, $c(Y)=3$.
\end{proof}

\begin{claim}\label{cl-22}
$c(Y)=1$ for each $Y\in \mathcal{B}_{[\emptyset, Z_4]}\setminus \mathcal{B}_{[Z_2, Z_4]}$.
\end{claim}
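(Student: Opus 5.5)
The plan is to split according to how $Y$ sits relative to $Z_2$. Since $Y\le Z_4$ but $Y\not\ge Z_2$, and $Z_4\ge Z_2$, we have $Y<Z_4$, and either $Y\nsim Z_2$ or $Y<Z_2$. If $Y=\emptyset$ there is nothing to prove. I first handle the incomparable case, then reduce $Y<Z_2$ to it. Throughout I use the normalization $c(\emptyset)=1$, $c(Z_i)=i$ for $i\in\{2,3,4\}$, $c([n])=5$, together with Claim~\ref{cl-21}.

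\emph{Case $Y\nsim Z_2$.} Apply Lemma~\ref{lem:TL1} with $X=Z_2$, $Z=Z_4$, $W=Y$. This is legitimate because $|Z_4|\ge |Z_2|+2$, and $Y\notin \mathcal{B}_{[\emptyset,Z_2]}\cup\mathcal{B}_{[Z_4,[n]]}$. It gives $W_0\in\mathcal{B}_{(Z_2,Z_4)}$ with $W_0\nsim Y$, and $c(W_0)=3$ by Claim~\ref{cl-21}.
\begin{itemize}
\item If $c(Y)\notin\{1,3,4\}$, then $\{\emptyset,Y,W_0,Z_4\}$ is an induced $\mathcal{B}_2$ (both $Y$ and $W_0$ lie below $Z_4$), with colors $1,c(Y),3,4$. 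It is therefore rainbow, a contradiction.
\item If $c(Y)=3$, then $\{\emptyset,Y,Z_2,[n]\}$ is a rainbow induced $\mathcal{B}_2$ with colors $1,3,2,5$.
\item If $c(Y)=4$, then $\{\emptyset,Y,W_0,[n]\}$ is a rainbow induced $\mathcal{B}_2$ with colors $1,4,3,5$.
\end{itemize}
Hence $c(Y)=1$.

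\emph{Case $\emptyset<Y<Z_2$.} Pick $a\in Z_2\setminus Y$ and $b\in Z_4\setminus Z_2$, and set $Y'=(Z_2\setminus\{a\})\cup\{b\}$. Then $Y<Y'<Z_4$ and $Y'\nsim Z_2$, so $c(Y')=1$ by the first case. Now split on $c(Y)$.
\begin{itemize}
\item If $c(Y)\notin\{1,2,4\}$, then $\{Y,Y',Z_2,Z_4\}$ is an induced $\mathcal{B}_2$ with colors $c(Y),1,2,4$, which is rainbow.
\item If $c(Y)=4$, then $\{Y,Y',Z_2,[n]\}$ is rainbow with colors $4,1,2,5$.
\item If $c(Y)=2$, I need a color-$3$ set above $Y$ that is incomparable to $Y'$. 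Since $Z_2<Z_3<Z_4$, we have $|Z_4\setminus Z_2|\ge 2$, so we can pick $b'\in Z_4\setminus Z_2$ with $b'\neq b$. Let $W=Z_2\cup\{b'\}$. Then $W\in\mathcal{B}_{(Z_2,Z_4)}$, so $c(W)=3$ by Claim~\ref{cl-21}, and $W>Y$. Moreover $a\in W\setminus Y'$ and $b\in Y'\setminus W$, so $W\nsim Y'$. Thus $\{Y,Y',W,[n]\}$ is rainbow with colors $2,1,3,5$.
\end{itemize}
So again $c(Y)=1$.

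The main obstacle is the second case, where $Y$ is comparable with everything in $\mathcal{B}_{[Z_2,Z_4]}$. The fix is the auxiliary set $Y'$, already forced to have color $1$ by the first case, which serves as one of the two incomparable middle elements. The subcase $c(Y)=2$ additionally needs the spare element $b'$, and this is exactly where $|Z_4|\ge|Z_2|+2$ is used.
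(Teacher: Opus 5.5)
Your proof is correct and follows essentially the paper's route: you handle $Y\nsim Z_2$ via Lemma~\ref{lem:TL1} together with a color-$3$ set from Claim~\ref{cl-21}, and you reduce $Y<Z_2$ to that case by building a color-$1$ set above $Y$ that is incomparable to $Z_2$, plus, when needed, a color-$3$ set in $\mathcal{B}_{(Z_2,Z_4)}$ incomparable to it, using $|Z_4\setminus Z_2|\ge 2$. Your witness sets differ slightly from the paper's (it uses $Y\cup\{u\}$ where you use $(Z_2\setminus\{a\})\cup\{b\}$), and your splits such as $c(Y)\notin\{1,3,4\}$ have the small advantage of explicitly covering colors beyond $5$, which the paper leaves implicit.
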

\begin{proof}
Let $Y\in \mathcal{B}_{[\emptyset, Z_4]}\setminus \mathcal{B}_{[Z_2, Z_4]}$.
Suppose first that $Y \notin \mathcal{B}_{[\emptyset, Z_2)}$. Then $Y\nsim Z_2$. By Lemma \ref{lem:TL1} with $X=Z_2$, $Z=Z_4$, and $W=Y$, there exists a set $Y'\in\mathcal{B}_{(Z_2, Z_4)}$ such that $Y\nsim Y'$. By Claim \ref{cl-21}, we have $c(Y') = 3$.
If $c(Y)\in\{2, 4\}$, then $\{\emptyset, Y, Y', [n]\}$ forms a rainbow induced copy of $\mathcal{B}_2$; if $c(Y)\in\{3, 5\}$, then $\{\emptyset, Z_2, Y, Z_4\}$ forms a rainbow induced copy of $\mathcal{B}_2$, a contradiction. Therefore, $c(Y)=1$.

Next suppose that $Y \in \mathcal{B}_{[\emptyset, Z_2)}$. Since $c(\emptyset)=1$, it remains to treat $Y\in\mathcal{B}_{(\emptyset, Z_2)}$. Choose $w\in Z_4\setminus Z_2$ and $u\in (Z_4\setminus Z_2)\setminus\{w\}$. Then let $Y'= Z_2\cup\{w\}$ and $Y''= Y\cup\{u\}.$
Then $Y'\in \mathcal{B}_{(Z_2, Z_4)}$ and $Y''\in \mathcal{B}_{(Y, Z_4)}\setminus\mathcal{B}_{[\emptyset, Z_2]}$. By Claim \ref{cl-21}, we know $c(Y')=3$.
Choose $v\in Z_2\setminus Y$. Then $v\notin Y$ and hence $v\notin Y''$.
Since $Z_2\subseteq Y'$ we have $v\in Y'$, and thus $v\in Y'\setminus Y''$. Therefore, $Y'\not\leq Y''$.
Since $u\notin Z_2$, we have $u\in Y''\setminus Z_2$. Note that $w\neq u$. Thus, $u\notin Y'$, and hence $u\in Y''\setminus Y'$, giving $Y''\not\leq Y'$.
Therefore, $Y'\nsim Y''$.
Moreover, $u\in Y''\setminus Z_2$ shows $Y''\not\leq Z_2$, while $v\in Z_2\setminus Y''$ shows $Z_2\not\leq Y''$, and so $Y''\nsim Z_2$. 
Note that $Y''\notin\mathcal{B}_{[\emptyset, Z_2]}$. By
the previous paragraph, $c(Y'')=1$.
If $c(Y)\in\{2, 4\}$, then $\{Y, Y', Y'', [n]\}$ forms a rainbow induced copy of $\mathcal{B}_2$; if $c(Y)\in\{3, 5\}$, then $\{Y, Z_2, Y'', Z_4\}$ forms a rainbow induced copy of $\mathcal{B}_2$, a contradiction.
Therefore, $c(Y)=1$.
\end{proof}

By symmetry, interchange the roles of $\emptyset$ and $[n]$ and also the roles of $Z_2$ and $Z_4$, and at the same time relabel the colors by replacing every occurrence of color $1$ with color $5$ and vice versa, and replacing color $2$ with color $4$ and vice versa, while leaving color $3$ unchanged. The same argument as in Claim \ref{cl-22} shows that $c(Y)=5$ for every $Y\in \mathcal{B}_{[Z_2, [n]]}\setminus \mathcal{B}_{[Z_2, Z_4]}$.

Let $Y\in \mathcal{B}_n\setminus (\mathcal{B}_{[Z_2, [n]]}\cup \mathcal{B}_{[\emptyset, Z_4]})$. Then $Y\nsim Z_2$ and $Y\nsim Z_4$. If $c(Y)\in\{2, 3\}$, then $\{\emptyset, Z_4, Y, [n]\}$ forms a rainbow induced copy of $\mathcal{B}_2$; if $c(Y)=4$, then $\{\emptyset, Z_2, Y, [n]\}$ forms a rainbow induced copy of $\mathcal{B}_2$, a contradiction.
Thus, 
$c(Y)\in \{1, 5\}$.

Therefore, we conclude that the coloring $c$ is of Type 1 with $X_0 = Z_2$ and $Y_0 = Z_4$, 
which completes the proof of the ``only if'' part.
\end{proof}

\subsection{An exact $4$-coloring $c$ without a rainbow induced copy of $\mathcal{B}_2$ with $c(\emptyset)\neq c([n])$}
\label{subsection24}

{We now introduce several types of exact \(4\)-colorings of \(\mathcal{B}_n\), each obtained from the coloring of Type~1 by making the colors on two of the following families the same:
\(\mathcal{B}_{[\emptyset, Y_0]} \setminus \mathcal{B}_{[X_0, Y_0]}\), \(\{X_0\}\), \(\mathcal{B}_{(X_0, Y_0)}\), \(\{Y_0\}\), and \(\mathcal{B}_{[X_0,[n]]} \setminus \mathcal{B}_{[X_0, Y_0]}\).
Since \(c(\emptyset)\neq c([n])\), we do not make the colors on
\(\mathcal{B}_{[\emptyset, Y_0]} \setminus \mathcal{B}_{[X_0, Y_0]}\) and
\(\mathcal{B}_{[X_0,[n]]} \setminus \mathcal{B}_{[X_0, Y_0]}\) the same.

We define the resulting types as follows.
Type~2 is obtained by making the colors on \(\{X_0\}\) and \(\{Y_0\}\) the same.
Type~3\textnormal{-}1 is obtained by making the colors on \(\{X_0\}\) and \(\mathcal{B}_{[X_0,[n]]} \setminus \mathcal{B}_{[X_0, Y_0]}\) the same.
Type~3\textnormal{-}2 is obtained by making the colors on \(\mathcal{B}_{[\emptyset, Y_0]} \setminus \mathcal{B}_{[X_0, Y_0]}\) and \(\{Y_0\}\) the same.
Type~4\textnormal{-}1 is obtained by making the colors on two of
\(\mathcal{B}_{(X_0, Y_0)}\), \(\{Y_0\}\), and \(\mathcal{B}_{[X_0,[n]]} \setminus \mathcal{B}_{[X_0, Y_0]}\) the same.
Type~4\textnormal{-}2 is obtained by making the colors on two of
\(\mathcal{B}_{[\emptyset, Y_0]} \setminus \mathcal{B}_{[X_0, Y_0]}\), \(\{X_0\}\), and \(\mathcal{B}_{(X_0, Y_0)}\) the same.

In fact, only Type~2 can be obtained directly from  Type~1 by making \(\{X_0\}\) and \(\{Y_0\}\) the same color. For the other types, however, making the indicated two families the same color forces additional changes elsewhere in the coloring, so the coloring on the remaining parts of \(\mathcal{B}_n\) is not identical to that of Type~1.}

We then prove that these are in fact the only exact $4$-colorings of $\Bn$ with $c(\emptyset)\neq c([n])$ such that  there is no rainbow induced copy of $\BB$.

{Based on Type~1, we define Type~2 by making the colors on \(\{X_0\}\) and \(\{Y_0\}\) the same.} 
\begin{itemize}
\item
\emph{Type $2$}: 
There exist two sets $X_0, Y_0 \in \Bn$ with $\emptyset < X_0 < Y_0 < [n]$ and $|Y_0| \geq |X_0| + 2$
such that all sets in the following families are monochromatically colored
so that no two sets from different families share the same color:
$$\mathcal{B}_{[\emptyset, Y_0]} \setminus \mathcal{B}_{[X_0, Y_0]}, \quad \{X_0, Y_0\}, \quad \mathcal{B}_{(X_0, Y_0)}, \quad \mathcal{B}_{[X_0, [n]]} \setminus \mathcal{B}_{[X_0, Y_0]}.$$

Moreover, 
each set in 
$\Bn \setminus \left(\mathcal{B}_{[\emptyset, Y_0]} \cup \mathcal{B}_{[X_0, [n]]}\right)$
has the same color as
the sets in the first or the last family.
\end{itemize}
Note that both $\mathcal{B}_{[\emptyset, Y_0]}$ and $\mathcal{B}_{[X_0, [n]]}$ are colored with exactly $3$ colors.

{Based on Type~1, we define Type~3\textnormal{-}1 by making the colors on \(\{X_0\}\) and \(\mathcal{B}_{[X_0,[n]]}\setminus \mathcal{B}_{[X_0,Y_0]}\) the same.
Compared with Type~1, the colors of all sets in
$\mathcal{B}_n\setminus\bigl(\mathcal{B}_{[\emptyset,Y_0]}\setminus\{X_0\}\bigr)$ are changed, while the colors on $\mathcal{B}_{[\emptyset,Y_0]}\setminus\{X_0\}$ remain the same. 

Based on Type~1, we define Type~3-2 by making the colors on \(\mathcal{B}_{[\emptyset, Y_0]} \setminus \mathcal{B}_{[X_0, Y_0]}\) and \(\{Y_0\}\) the same.
Compared with Type~1, the colors of all sets in
$\mathcal{B}_n\setminus\bigl(\mathcal{B}_{[X_0,[n]]}\setminus\{Y_0\}\bigr)$ are changed, while the colors on $\mathcal{B}_{[X_0,[n]]}\setminus\{Y_0\}$ remain the same. 
We first introduce the following notation.}

Let $X_0, Y_0 \in \mathcal{B}_n$ be two sets with $X_0<Y_0$. Before introducing Types~3-1 and~3-2, we define a subfamily $\mathcal{B}^{Y_0\uparrow}_{(X_0, [n]]}$ of $\mathcal{B}_{(X_0, [n]]}$ by
$$
\mathcal{B}^{Y_0\uparrow}_{(X_0, [n]]} = \bigcup_{X_0' \in \mathcal{B}_{(X_0, Y_0)}} \mathcal{B}_{[X_0', [n]]}, 
$$
and a subfamily $\mathcal{B}^{X_0\downarrow}_{[\emptyset, Y_0)}$ of $\mathcal{B}_{[\emptyset, Y_0)}$ by
$$
\mathcal{B}^{X_0\downarrow}_{[\emptyset, Y_0)} = \bigcup_{Y_0' \in \mathcal{B}_{(X_0, Y_0)}} \mathcal{B}_{[\emptyset, Y_0']}.
$$
Equivalently, 
$
\mathcal{B}^{Y_0\uparrow}_{(X_0, [n]]}=\{X\in \mathcal{B}_{(X_0, [n]]} :  X\cap (Y_0\setminus X_0)\neq \emptyset\} 
$
and 
$\mathcal{B}^{X_0\downarrow}_{[\emptyset, Y_0)}=\{Y\in \mathcal{B}_{[\emptyset, Y_0)} :  Y_0\setminus X_0\not\subseteq Y\}.$
Note that $\mathcal{B}_{(X_0, Y_0)}
=\;\mathcal{B}^{Y_0\uparrow}_{(X_0, [n]]}\;\cap\;\mathcal{B}^{X_0\downarrow}_{[\emptyset, Y_0)}$.
The “up” family $\mathcal{B}^{Y_0\uparrow}_{(X_0, [n]]}$ consists of sets above $X_0$ that meet $Y_0\setminus X_0$; the “down” family $\mathcal{B}^{X_0\downarrow}_{[\emptyset, Y_0)}$ consists of sets below $Y_0$ that do not contain all of $Y_0\setminus X_0$. If we swap the bottom and the top and interchange $X_0$ and $Y_0$, “meet $Y_0\setminus X_0$ above $X_0$” becomes “miss $Y_0\setminus X_0$ below $Y_0$”. Thus the two families are symmetric.

\begin{itemize}
\item
\emph{Type $3$-$1$}:
There exist two sets $X_0, Y_0 \in \Bn$ with $\emptyset < X_0 < Y_0 < [n]$ and $|Y_0| \geq |X_0| + 2$
such that
all sets in the following families are monochromatically colored
so that no two sets from different families share the same color:
$$
\mathcal{B}_{[\emptyset, Y_0]} \setminus \mathcal{B}_{[X_0, Y_0]},  \quad
\{X_0\}\cup \mathcal{B}^{Y_0\uparrow}_{(X_0, [n]]}\setminus \mathcal{B}_{(X_0, Y_0]}, \quad 
\mathcal{B}_{(X_0, Y_0)}, \quad
\{Y_0\}.
$$
Moreover, 
each set in $\Bn \setminus \left(\mathcal{B}_{[\emptyset, Y_0]} \cup \mathcal{B}^{Y_0\uparrow}_{(X_0, [n]]}\right)$
has the same color as
the sets in the first or the second family.
\end{itemize}
Note that both $\mathcal{B}_{[\emptyset, Y_0]}\setminus\{X_0\}$ and $\mathcal{B}^{Y_0\uparrow}_{(X_0, [n]]}$ are colored by 3 colors.

\begin{itemize}
\item
\emph{Type $3$-$2$}:
There exist two sets $X_0, Y_0 \in \Bn$ with $\emptyset < X_0 < Y_0 < [n]$ and $|Y_0| \geq |X_0| + 2$
such that all sets in the following families are monochromatically colored
so that no two sets from different families share the same color:
$$
\mathcal{B}_{[X_0, [n]]} \setminus \mathcal{B}_{[X_0, Y_0]}, \quad
\{Y_0\}\cup\mathcal{B}^{X_0\downarrow}_{[\emptyset, Y_0)}\setminus \mathcal{B}_{[X_0, Y_0)}, \quad
\mathcal{B}_{(X_0, Y_0)}, \quad
\{X_0\}.
$$
Moreover, 
each set in $\Bn \setminus \left(\mathcal{B}_{[X_0, [n]]} \cup \mathcal{B}^{X_0\downarrow}_{[\emptyset, Y_0)}\right)$
has the same color as
the sets in the first or the second family.
\end{itemize}
%%%%%%%%%%%%%%%%%%%%%%%%%%%%%%%%%%%%%%%%%%%%%%%%%%%%%%%%%%%%%%%%%%%%%%%%%%%%%%%%
{Based on Type~1, we define Type~4\textnormal{-}1 by making the colors on two of
\(\mathcal{B}_{(X_0, Y_0)}\), \(\{Y_0\}\), and \(\mathcal{B}_{[X_0,[n]]}\setminus \mathcal{B}_{[X_0, Y_0]}\) the same.
If \(\mathcal{Y}_0=\mathcal{B}_{(X_0, Y_0]}\), then the colors on \(\mathcal{B}_{(X_0, Y_0)}\) and \(\{Y_0\}\) are the same.
If \(\mathcal{Y}_0=\{Y_0\}\), then the colors on \(\mathcal{B}_{(X_0, Y_0)}\) and \(\mathcal{B}_{[X_0,[n]]}\setminus \mathcal{B}_{[X_0, Y_0]}\) are the same.
If \(\mathcal{Y}_0=\mathcal{B}_{(X_0, Y_0)}\), then the colors on \(\{Y_0\}\) and \(\mathcal{B}_{[X_0,[n]]}\setminus \mathcal{B}_{[X_0, Y_0]}\) are the same.}

\begin{itemize}

\item
\emph{Type $4$-$1$}:
There exists a set $X_0$ with $1 \leq |X_0| \leq n-2$
and a family $\mathcal{Y}_0 \subseteq \mathcal{B}_{(X_0, [n]]}$ with $\mathcal{Y}_0 \neq \emptyset$
such that 
all sets in the following families are monochromatically colored
so that no two sets from different families share the same color:
$$
\bigcup_{Y \in \mathcal{Y}_0} \left(\mathcal{B}_{[\emptyset, Y)} \setminus \mathcal{B}_{[X_0, Y)}\right), \quad 
\{X_0\}, \quad 
\mathcal{B}_{(X_0, [n]]} \setminus \mathcal{Y}_0, \quad
\mathcal{Y}_0.
$$
Moreover, each set in
$\Bn \setminus \left(\mathcal{B}_{[X_0, [n]]} \cup \bigcup_{Y\in \mathcal{Y}_0}\mathcal{B}_{[\emptyset, Y)}\right)$
has the same color as
the sets in the first or the third family.
\end{itemize}
Based on Type~1, we define Type~4-2 by merging two of colors appearing on $
\mathcal{B}_{[\emptyset, Y_0]} \setminus \mathcal{B}_{[X_0, Y_0]}, \{X_0\}$, and  $\mathcal{B}_{(X_0, Y_0)}$.
If we merge two of colors appearing on $ \mathcal{B}_{[\emptyset, Y_0]} \setminus \mathcal{B}_{[X_0, Y_0]}$ and $\{X_0\}$, 
then let $\mathcal{X}_0=\mathcal{B}_{(X_0, Y_0)}$; if we merge two of colors appearing on $ \mathcal{B}_{[\emptyset, Y_0]} \setminus \mathcal{B}_{[X_0, Y_0]}$ and $\mathcal{B}_{(X_0, Y_0)}$, then let 
$\mathcal{X}_0=\{X_0\}$; if we merge two of colors appearing on $\{X_0\}$ and $\mathcal{B}_{(X_0, Y_0)}$, then let 
$\mathcal{X}_0=\mathcal{B}_{[X_0, Y_0)}$.

\begin{itemize}
\item
%[$(3.2)$]  
\emph{Type $4$-$2$}:
There exists a set $Y_0$ with $2 \leq |Y_0| \leq n-1$
and a family $\mathcal{X}_0 \subseteq \mathcal{B}_{(\emptyset, Y_0)}$ with $\mathcal{X}_0\neq \emptyset$
such that 
all sets in the following families are monochromatically colored
so that no two sets from different families share the same color:
$$
\bigcup_{X \in \mathcal{X}_0} \left(\mathcal{B}_{(X, [n]]} \setminus \mathcal{B}_{(X, Y_0]}\right), \quad 
\{Y_0\}, \quad \mathcal{B}_{[\emptyset, Y_0)} \setminus \mathcal{X}_0, \quad 
\mathcal{X}_0.
$$

%are monochromatic with distinct colors. 
Moreover, each set in
$\Bn \setminus \left(\mathcal{B}_{[\emptyset, Y_0]}\cup\bigcup_{X\in \mathcal{X}_0} \mathcal{B}_{(X, [n]]}\right)$
has the same color as
the sets in the first or the third family.
\end{itemize}
Swapping $X_0$ with $Y_0$ and $\mathcal{Y}_0$ with $\mathcal{X}_0$, and reversing inclusion, 
Type~4-1 is transformed into Type~4-2 and vice versa. Thus, the two types are symmetric.

The following observation will be useful in the proof of Lemma~\ref{emptyset=n}.
\begin{observation}\label{obs1}
In Types~$2$, $3$-$1$, $3$-$2$, and $4$-$2$, there exists no set $Z$ with $\mathcal{B}_{(Z, [n])}\neq \emptyset$ such that 
all sets in the families $\{Z\}$, 
$\mathcal{B}_{(Z, [n])}$, $\{[n]\}$ are monochromatically colored, and moreover no two sets from different families share
the same color.  
\end{observation}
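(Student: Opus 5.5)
The plan is to argue by contradiction, type by type. Suppose $Z$ is a set with $\mathcal{B}_{(Z,[n])}\neq\emptyset$ such that $\{Z\}$, $\mathcal{B}_{(Z,[n])}$ and $\{[n]\}$ are monochromatic with three pairwise distinct colors. Call these colors $\gamma$, $a$ and $\delta=c([n])$.

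The basic tool is the top of the interval $\mathcal{B}_{[Z,[n]]}$. Since $\mathcal{B}_{(Z,[n])}\neq\emptyset$ we have $|Z|\le n-2$. So there are distinct $x,y\notin Z$, and the three sets $[n]\setminus\{x\}$, $[n]\setminus\{y\}$ and $[n]\setminus\{x,y\}$ all lie in $\mathcal{B}_{(Z,[n])}$; hence they all have color $a\neq\delta$. More generally, every coatom $[n]\setminus\{x\}$ with $x\notin Z$ has color $a$. I will therefore, in each type, first determine which coatoms can avoid the color of $[n]$. Then I will show that the coatoms above $Z$ cannot all carry the same color $a\neq\delta$, or, if they can, that $Z$ is forced into a position where some other member of $\mathcal{B}_{(Z,[n])}$, or $Z$ itself, gets a color inconsistent with the assumption.

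\emph{Types $2$ and $3$-$2$.} Here $[n]$ lies in the family $\mathcal{B}_{[X_0,[n]]}\setminus\mathcal{B}_{[X_0,Y_0]}$. Every coatom containing $X_0$ other than $Y_0$ belongs to this family and so has color $\delta$. Hence every coatom above $Z$ is either $Y_0$ or misses a point of $X_0$. Two distinct such coatoms of the same color $a$ cannot include $Y_0$, because the color of $Y_0$ (Type $2$), respectively of $\{Y_0\}\cup\mathcal{B}^{X_0\downarrow}_{[\emptyset,Y_0)}\setminus\mathcal{B}_{[X_0,Y_0)}$ (Type $3$-$2$), is not shared with the remaining sets in the "moreover" part. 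So all coatoms above $Z$ miss points of $X_0$, which forces $[n]\setminus Z\subseteq X_0$. Then I will take $x\in[n]\setminus Z$ and look at the sets $Z\cup\{x\}$, $Z\cup X_0\setminus\{x\}$ and $([n]\setminus\{x,y\})\cup(X_0\cap\cdot)$ lying in $\mathcal{B}_{(Z,[n])}$. Among these I expect to find one member of $\mathcal{B}_{(X_0,[n]]}$ or of $\mathcal{B}_{[X_0,Y_0]}$ whose color differs from that of the coatoms, giving the contradiction. If $Z$ is comparable to $X_0$ or to $Y_0$, the contradiction is immediate, since then $\mathcal{B}_{(Z,[n])}$ contains sets from two different families among $\mathcal{B}_{(X_0,Y_0)}$, $\{Y_0\}$ and $\mathcal{B}_{[X_0,[n]]}\setminus\mathcal{B}_{[X_0,Y_0]}$.

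\emph{Types $3$-$1$ and $4$-$2$.} These I expect to be the main obstacle, because the family containing $[n]$ is the irregular one: $\{X_0\}\cup\mathcal{B}^{Y_0\uparrow}_{(X_0,[n]]}\setminus\mathcal{B}_{(X_0,Y_0]}$ in Type $3$-$1$, and $\bigcup_{X\in\mathcal{X}_0}\bigl(\mathcal{B}_{(X,[n]]}\setminus\mathcal{B}_{(X,Y_0]}\bigr)$ in Type $4$-$2$. I will use the equivalent description $\mathcal{B}^{Y_0\uparrow}_{(X_0,[n]]}=\{X\supsetneq X_0: X\cap(Y_0\setminus X_0)\neq\emptyset\}$. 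From it, a coatom $[n]\setminus\{x\}$ has color $\delta$ as soon as it contains $X_0$ (resp.\ some $X\in\mathcal{X}_0$) and is not $Y_0$, using $|Y_0\setminus X_0|\ge2$. In Type $4$-$2$ the extra obstruction is that $\mathcal{X}_0\subseteq\mathcal{B}_{(\emptyset,Y_0)}$ is nonempty, so some $X\in\mathcal{X}_0$ exists; one then repeats the Type $2$ argument with $X$ in place of $X_0$. The additional case, where $Z$ itself lies in $\mathcal{B}_{[\emptyset,Y_0)}$ and all coatoms above it avoid every $X\in\mathcal{X}_0$, will be handled using the monochromatic family $\mathcal{B}_{[\emptyset,Y_0)}\setminus\mathcal{X}_0$.

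In every case the final step is the same. Exhibit two members of $\mathcal{B}_{(Z,[n])}$ belonging to different monochromatic families of the type, or one member sharing the color $\delta$ of $[n]$. This contradicts the assumption on $Z$ and proves the observation.
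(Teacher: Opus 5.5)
Your plan has a genuine gap at the step you call the common final step. You propose to exhibit a member of $\mathcal{B}_{(Z,[n])}$ with a second color or with color $c([n])$, but $\mathcal{B}_{(Z,[n])}$ can genuinely be monochromatic in a color different from $c([n])$. In Type~$2$, take $n=5$, $X_0=\{1,2\}$, $Y_0=\{1,2,3,4\}$, give every set of the ``moreover'' part the color of $\emptyset$, and let $Z=\{3,4,5\}$. This $Z$ passes all your reductions: it is incomparable to $X_0$ and to $Y_0$, and $[n]\setminus Z\subseteq X_0$. Yet $\mathcal{B}_{(Z,[5])}=\{\{1,3,4,5\},\{2,3,4,5\}\}$ has the single color $c(\emptyset)\neq c([5])$.

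In general, once $[n]\setminus Z\subseteq X_0$ you have $Z\cup X_0=[n]$. Then no member of $\mathcal{B}_{(Z,[n])}$ contains $X_0$, so the member of $\mathcal{B}_{(X_0,[n]]}$ or $\mathcal{B}_{[X_0,Y_0]}$ you expect to find cannot exist. The only contradiction available comes from $c(Z)$ itself, which you mention in your outline but never use.

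Your Type~$3$-$2$ paragraph also contains a false claim. There the ``moreover'' clause explicitly allows the remaining sets to take the color of the second family, which is the family containing $Y_0$. For $n=4$, $X_0=\{1\}$, $Y_0=\{1,2,3\}$, $Z=\{2,3\}$, the coatoms $Y_0$ and $\{2,3,4\}$ above $Z$ may both have color $c(\emptyset)$. So neither `all coatoms above $Z$ miss points of $X_0$' nor `if $Z$ is comparable to $Y_0$ the contradiction is immediate' holds there. Your sketches for Types~$3$-$1$ and $4$-$2$ defer to the Type~$2$ argument and inherit the same problem.

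The missing idea is this: in Types~$2$, $3$-$1$ and $3$-$2$, every set outside $\mathcal{B}_{[X_0,[n]]}$ has color $c(\emptyset)$ or $c([n])$. Since $|Z|\le n-2$, at least two coatoms lie in $\mathcal{B}_{(Z,[n])}$.
\begin{itemize}
\item If $X_0\subseteq Z$, at most one of these coatoms is $Y_0$. Any other coatom containing $X_0$ has color $c([n])$; in Type~$3$-$1$ this uses $|Y_0\setminus X_0|\ge 2$. This is a contradiction.
\item If $X_0\not\subseteq Z$, then $c(Z)\in\{c(\emptyset),c([n])\}$. For $x\in X_0\setminus Z$, the coatom $[n]\setminus\{x\}\in\mathcal{B}_{(Z,[n])}$ also does not contain $X_0$, so its color lies in the same set. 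The two distinct colors of $Z$ and of $\mathcal{B}_{(Z,[n])}$ both differ from $c([n])$, so both would have to equal $c(\emptyset)$, which is absurd.
\end{itemize}
Type~$4$-$2$ works the same way, with $\mathcal{X}_0\cup\{Y_0\}$ in place of $\mathcal{B}_{[X_0,[n]]}$. If $Z\in\mathcal{X}_0\cup\{Y_0\}$, every coatom above $Z$ other than $Y_0$ strictly contains a member of $\mathcal{X}_0$ and is not contained in $Y_0$, so it has color $c([n])$. Otherwise $c(Z)\in\{c(\emptyset),c([n])\}$. Any coatom above $Z$ other than $Y_0$ is not in $\mathcal{X}_0$, since members of $\mathcal{X}_0$ have size at most $n-2$, so its color is in that set too. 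The paper states the observation without proof, so this short argument is what is needed.
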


\begin{lemma}\label{k4}
Let $n$ be an integer with $n \geq 4$.
Consider an exact $4$-coloring $c$ of $\mathcal{B}_n$
with $c(\emptyset)\neq c([n])$.
Then there is no rainbow induced copy of $\mathcal{B}_2$
if and only if the coloring $c$ is of Type $2$, $3$-$1$, $3$-$2$, $4$-$1$ or $4$-$2$.
\end{lemma}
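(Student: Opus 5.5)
The plan has two halves. The ``if'' direction is a case check for each type, in the same way as the ``if'' part of Lemma~\ref{k5}. The ``only if'' direction starts from a rainbow chain and then propagates colors using Lemma~\ref{lem:TL1}, mirroring Claims~\ref{cl-21} and~\ref{cl-22}.

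For the ``if'' direction, suppose $W_1<W_2,W_3<W_4$ is a rainbow induced copy of $\mathcal{B}_2$ with $W_2\nsim W_3$. I will split according to the color of $W_1$. In each type, each color class sits inside one of the explicit families. Types~3-1 and~3-2 use the families $\mathcal{B}^{Y_0\uparrow}_{(X_0,[n]]}$ and $\mathcal{B}^{X_0\downarrow}_{[\emptyset,Y_0)}$, and Types~4-1 and~4-2 use $\mathcal{Y}_0$ and $\mathcal{X}_0$. The up-sets of these families are closed, or nearly closed, in the relevant sense. So once $W_1$ lies in a given family, the sets above it see at most three colors, and the two incomparable middle sets cannot carry two colors that are both distinct from $c(W_1)$ and $c(W_4)$. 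Types~4-1 and~4-2 are dual to each other, and so are Types~3-1 and~3-2, so only Types~2, 3-1 and~4-1 need checking.

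For the ``only if'' direction, assume $c(\emptyset)=1$ and $c([n])=4$, and let the remaining colors be $2$ and $3$. By Lemma~\ref{lem-x-y}, any set colored $2$ is comparable with any set colored $3$. The first step is to fix a minimal witness pair. Choose $X_0$ of color $2$ and $Y_0$ of color $3$ that are comparable; assume $X_0<Y_0$, the reverse order being symmetric. Then split according to whether $|Y_0|\ge |X_0|+2$ can be arranged.

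In the main case the interval $\mathcal{B}_{(X_0,Y_0)}$ is nonempty. Applying Lemma~\ref{lem:TL1} exactly as in Claim~\ref{cl-21}, the sets in $(X_0,Y_0)$ incomparable to a fixed middle set are forced into a single color. I then determine which of the five Type-1 families coincide in color. If $\{X_0\}$ and $\{Y_0\}$ share a color, we get Type~2. If $X_0$ shares the top color, we get Type~3-1, and dually Type~3-2. If the middle interval merges with $Y_0$ or with the top color, we get Type~4-1, with $\mathcal{Y}_0$ defined as the set of elements above $X_0$ that carry color $3$; the analogous merges at the bottom give Type~4-2. The remaining case, where every pair of differently colored middle sets has size difference $1$, should force one of the colors $2$ or $3$ to be a single set adjacent to a whole family, which again gives a Type~4 configuration.

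I expect the main obstacle to be the Type~3 and Type~4 identifications. There the color classes are not intervals, so the Claim~\ref{cl-22}-style propagation has to be redone. For example, in Type~3-1 one must show that every set above $X_0$ meeting $Y_0\setminus X_0$ and lying outside $(X_0,Y_0]$ gets color $2$. This is exactly the set of elements $Z$ for which some $Z'\in(X_0,Y_0)$ satisfies $Z'\le Z$. For such $Z$ I would find a witness $\{X_0\text{ or }\emptyset,\,Z,\,Y',\,[n]\}$ with $Y'\in\mathcal{B}_{(X_0,Y_0)}$ incomparable to $Z$, obtained by the single-element swap trick from Claim~\ref{cl-21}. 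Finally, sets outside all the named families must be forced into the first or the last color via the squares $\{\emptyset,X_0,Y,[n]\}$ and $\{\emptyset,Y_0,Y,[n]\}$.
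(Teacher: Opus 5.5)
Your ``if'' half is fine in outline. The gap is in the ``only if'' half, at the step where you transplant Claim~\ref{cl-21}. That claim works only because the middle set $Z_3$ carries a fifth color, distinct from $c(\emptyset)$, $c(Z_2)$, $c(Z_4)$ and $c([n])$. With four colors this fails. Take your $X_0$ (color $2$), your $Y_0$ (color $3$), and incomparable $M,Y\in\mathcal{B}_{(X_0,Y_0)}$. The frames $\{B,M,Y,T\}$ with $B\in\{\emptyset,X_0\}$ and $T\in\{Y_0,[n]\}$ rule out exactly the cases where $\{c(M),c(Y)\}$ is one of $\{1,3\}$, $\{1,4\}$, $\{2,3\}$, $\{2,4\}$. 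So you only get $c(Y)\in\{1,2\}$ or $c(Y)\in\{3,4\}$ depending on $c(M)$, never a single color.

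A cleverer witness cannot fix this. Take Type~4-1 normalized so that $X_0$ has color $2$, $\mathcal{Y}_0$ has color $3$, and $\mathcal{B}_{(X_0,[n]]}\setminus\mathcal{Y}_0$ has color $4$. There $\mathcal{Y}_0$ is an arbitrary nonempty subfamily of $\mathcal{B}_{(X_0,[n]]}$. So in a coloring with no rainbow $\mathcal{B}_2$, the interval between the unique color-$2$ set and a color-$3$ set can carry any mixture of colors $3$ and $4$. Hence there is no Type~1 skeleton whose families could be ``merged''.

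Your identification rules also do not match your chosen pair. Your $X_0$ and $Y_0$ have colors $2$ and $3$, so they never share a color, and $X_0$ never has the top color. In the target types the anchors are different sets. In Type~3-1 the anchor $X_0$ has color $c([n])$ and lies \emph{below} a color-$2$ set. In Type~2 the two anchors share one middle color, while the other middle color fills $\mathcal{B}_{(X_0,Y_0)}$.

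What is missing is a way to locate these anchors. The paper fixes a comparable pair $X_2<X_3$ of colors $2$ and $3$ with $|X_3|-|X_2|$ maximal. Then no color-$2$ set lies below level $|X_2|$ and no color-$3$ set lies above level $|X_3|$. It next splits on whether $\mathcal{B}_{(\emptyset,X_2)}$ contains a set $X_0$ of color $3$ or $4$ (dually, whether $\mathcal{B}_{(X_3,[n])}$ contains one of color $1$ or $2$). In that case the sets $\emptyset<X_0<X_2<X_3<[n]$ supply enough distinct colors to run the Claim~\ref{cl-21} argument with $X_2$ as the middle set. This forces all of $\mathcal{B}_{(X_0,X_3)}$ to color $2$. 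Then $c(X_0)=3$ gives Type~2 and $c(X_0)=4$ gives Type~3-1.

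In the remaining case, $\mathcal{B}_{[\emptyset,X_2)}$ is entirely color $1$ and $\mathcal{B}_{(X_3,[n]]}$ is entirely color $4$. There the Type~4 conclusion rests on proving that one of the color classes $2$ or $3$ is a single set. That proof uses the extremal choice, plus a separate argument when $|X_3|=|X_2|+1$. Your dichotomy about size differences of differently colored middle sets is not this statement, and you give no argument for it. Your split on whether $|Y_0|\ge|X_0|+2$ can be arranged is not the relevant one either. As proposed, the ``only if'' direction is not established.
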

\begin{proof}
We first prove the ``if'' part.
Note that Type 3-1 is symmetric to Type 3-2, 
and Type 4-1 is symmetric to Type 4-2.
%the pairs (Type 3, Type 4) and (Type 5, Type 6) are symmetric, respectively.
Hence, it suffices to show that 
there is no rainbow induced copy of $\mathcal{B}_2$ for 
an exact $4$-coloring $c$ of Type 2, Type 3-1, and Type 4-1. 
%Conversely, suppose that one of Type 2, Type 3, ..., Type 6 holds.
Assume for contradiction that there exists a rainbow induced copy of $\mathcal{B}_2$
by the sets $W_1, W_2, W_3, W_4 \in \Bn$ with $W_1<W_2, W_3 <W_4$ and $W_2\nsim W_3$.

\begin{itemize}
\item
Suppose that $c$ is of Type 2.
Since Type~2 can be obtained directly from Type~1 by assigning the same color to $X_0$ and $Y_0$, 
there is no rainbow induced copy of $\mathcal{B}_2$
for an exact $4$-coloring of Type $2$.

\item
Suppose next that $c$ is of Type $3$-$1$.
Without loss of generality, 
we may assume that each of 
$\mathcal{B}_{[\emptyset, Y_0]} \setminus \mathcal{B}_{[X_0, Y_0]}$, 
$\{X_0\}\cup\mathcal{B}^{Y_0\uparrow}_{(X_0, [n]]}\setminus \mathcal{B}_{(X_0, Y_0]}$, 
$\mathcal{B}_{(X_0, Y_0)}$, and $\{Y_0\}$ is monochromatic 
with colors $1, 2, 3, 4$, respectively, 
which implies that 
each set in $\Bn \setminus \left(\mathcal{B}_{[\emptyset, Y_0]} \cup \mathcal{B}^{Y_0\uparrow}_{(X_0, [n]]}\right)$
has color either $1$ or $2$.

Note that $Y_0$ is the only set of color $4$, 
and any set of color $3$ is contained in $\mathcal{B}_{(X_0, Y_0)}$, 
which implies $c(W_1) = 3$ or $c(W_4) = 4$ (or both).
If $c(W_1) = 3$, 
that is, if $W_1 \in \mathcal{B}_{(X_0, Y_0)}$, 
then 
each set $Z \in \mathcal{B}_{[W_1, [n]]}$ 
is contained in $\mathcal{B}^{Y_0\uparrow}_{(X_0, [n]]}$, 
and hence $c(Z) = 2$ (when $Z \notin \mathcal{B}_{(X_0, Y_0]}$), 
$c(Z) = 3$ (when $Z \in \mathcal{B}_{(X_0, Y_0)}$), 
or
$c(Z) = 4$ (when $Z = Y_0$), 
contradicting that one of $W_2, W_3$ and $W_4$ has color $1$.

Therefore, we may assume that $c(W_1) \neq 3$ and $c(W_4) = 4$, 
that is, $W_4 = Y_0$.
By symmetry between $W_2$ and $W_3$, we may assume that $c(W_2)=3$, and hence $W_2 \in \mathcal{B}_{(X_0, Y_0)}$.
Since $X_0$ is the only set in $\mathcal{B}_{[\emptyset, Y_0)}$ of color $2$
and $X_0 < W_2$, 
we have $W_1 = X_0$.
However, 
all sets in $\mathcal{B}_{(W_1, W_4)} = \mathcal{B}_{(X_0, Y_0)}$ have the color $3$, 
a contradiction.

\item
Suppose that $c$ is of Type $4$-$1$.
Without loss of generality, assume that each of  
$\bigcup_{Y \in \mathcal{Y}_0} \left(\mathcal{B}_{[\emptyset, Y)} \setminus \mathcal{B}_{[X_0, Y)}\right)$, $\{X_0\}$, $\mathcal{B}_{(X_0, [n]]} \setminus \mathcal{Y}_0$, and $\mathcal{Y}_0$
is monochromatic with colors $1, 2, 3, 4$, respectively, 
which implies that 
%all remaining sets are colored 
each set in $\Bn \setminus \left(\mathcal{B}_{[X_0, [n]]} \cup \bigcup_{Y\in \mathcal{Y}_0}\mathcal{B}_{[\emptyset, Y)}\right)$
is colored either $1$ or $3$.

Note that $X_0$ is the only set of color $2$, 
and any set of color $4$ is contained in $\mathcal{Y}_0$, 
which implies $c(W_1) = 2$ or $c(W_4) = 4$ (or both).
If $c(W_1) = 2$, 
that is, if $W_1 = X_0$, 
then 
for each set $Z \in \mathcal{B}_{[W_1, [n]]}$, 
we have $c(Z) = 3$ (when $Z \notin \mathcal{Y}_0$), 
or
$c(Z) = 4$ (when $Z \in \mathcal{Y}_0$), 
contradicting that one of $W_2, W_3$ and $W_4$ has color $1$.

Therefore, we may assume that $c(W_1) \neq 2$ and $c(W_4) = 4$, 
that is, $W_4 \in \mathcal{Y}_0$.
By symmetry between $W_2$ and $W_3$, we may assume that $c(W_2) = 2$, 
and hence $W_2 = X_0$.
Since any set $Z \in \mathcal{B}_{[\emptyset, X_0)}$ satisfies $c(Z) = 1$, 
we see $c(W_1) = 1$, 
and hence $c(W_3) = 3$.
Since any set $Z$ in $\mathcal{B}_{(W_1, W_4)}$ with $c(Z) = 3$
has to be contained in $\mathcal{B}_{(X_0, [n]]} \setminus \mathcal{Y}_0$, 
we see $W_2 < W_3$, 
a contradiction.
\end{itemize}

This completes the proof of  the ``if'' part.
\medskip

We next prove the ``only if'' part.
Consider an exact $4$-coloring $c$ of $\mathcal{B}_n$
with $c(\emptyset)\neq c([n])$.
We may assume that $c(\emptyset) = 1$ and $c([n]) = 4$.
Since $c$ is an exact $4$-coloring, 
there exist two sets $X_2, X_3 \in \Bn$ with 
$c(X_2) = 2$ and $c(X_3) = 3$.
By Lemma \ref{lem-x-y}, 
we may assume $X_2 < X_3$.
By taking such two sets $X_2$ and $X_3$
so that $|X_3| - |X_2|$ is as large as possible, 
we can assume that $c(X) \neq 2$ for each $X$ with $|X|<|X_2|$
and $c(X) \neq 3$ for each $X$ with $|X|>|X_3|$.

Therefore, one of the following cases holds.
Case 1: There exists $X_0 \in \mathcal{B}_{(\emptyset, X_2)}$ such that $c(X_0) \in \{3, 4\}$; Case 2: There exists $Y_0 \in \mathcal{B}_{(X_3, [n])}$ with $c(Y_0)\in\{1, 2\}$; Case 3: All sets in $\mathcal{B}_{[\emptyset, X_2)}$ are colored $1$ and all sets in $\mathcal{B}_{(X_3, [n]]}$ are colored $4$. 
These three possibilities cover all colorings, so we analyze them separately.

\begin{case}\label{case1}
There exists a set $X_0 \in \mathcal{B}_{(\emptyset, X_2)}$ such that $c(X_0) \in \{3, 4\}$. 
\end{case}

We will later divide this case further into two subcases depending on $c(X_0)$, 
and show that $c$ is of Type 2 or $3$-$1$, 
respectively, with $X_0$ and $Y_0 = X_3$.
Since $X_0 < X_2 < X_3$, we have $|X_3| \geq |X_0| + 2$.

\begin{claim}
\label{Case1claim1}
$c(Y)= 2$ for each $Y\in \mathcal{B}_{(X_0, X_3)}$.
\end{claim}

\begin{proof}
Let $Y\in \mathcal{B}_{(X_0, X_3)}$.
Suppose first that $Y \nsim X_2$. 
If $c(Y)=1$, then there is a rainbow induced copy of $\mathcal{B}_2$ by $\{X_0, X_2, Y, [n]\}$ (when $c(X_0)=3$) or $\{X_0, X_2, Y, X_3\}$ (when $c(X_0)=4$);
if $c(Y)= 3$, then there is a rainbow induced copy of $\mathcal{B}_2$ by $\{\emptyset, X_2, Y, [n]\}$;
if $c(Y)= 4$, then there is a rainbow induced copy of $\mathcal{B}_2$ by $\{\emptyset, X_2, Y, X_3\}$, 
a contradiction.
Therefore, $c(Y) = 2$.

We next suppose that $Y \sim X_2$. 
In this case, 
we can find a set $Y_1\in \mathcal{B}_{(X_0, X_3)}$ with $Y_1\nsim Y$ and $Y_1 \nsim X_2$
(e.g., if $Y<X_2$, then take $Y_1=(X_2\cup\{a\})\setminus \{b\}$, where $a\in X_3\setminus X_2$ and $b\in Y$; if $Y>X_2$, then take $Y_1=(X_2\cup\{a\})\setminus \{b\}$, where $a\in X_3\setminus Y$ and $b\in X_2$).
By the fact obtained in the previous paragraph, we see $c(Y_1) = c(X_2) = 2$, 
and by the same argument as above by replacing $X_2$ with $Y_1$, 
we obtain $c(Y) = c(Y_1) = 2$.
This completes the proof of Claim \ref{Case1claim1}.
\end{proof}

\begin{claim}
\label{Case1claim2}
$c(Y)= 1$ for each $Y\in \mathcal{B}_{[\emptyset, X_3]}\setminus \mathcal{B}_{[X_0, X_3]}$.
\end{claim}

\begin{proof}
Let $Y\in \mathcal{B}_{[\emptyset, X_3]}\setminus \mathcal{B}_{[X_0, X_3]}$.
Suppose first that $Y \notin \mathcal{B}_{[\emptyset, X_0)}$. Then $Y\nsim X_0$.
Note that $Y \notin \mathcal{B}_{[X_3, [n]]}$.
By Lemma \ref{lem:TL1} with $X = X_0$, $Z = X_3$,  and $W=Y$, there exists a set $W_0 \in \mathcal{B}_{(X_0, X_3)}$ such that $W_0\nsim Y$. 
By Claim \ref{Case1claim1}, $c(W_0)=2$.
If $c(Y)=2$, then there is a rainbow induced copy of $\mathcal{B}_2$ by $\{\emptyset, Y, X_0, [n]\}$
(when $c(X_0)=3$) or $\{\emptyset, Y, X_0, X_3\}$ (when $c(X_0)=4$);
if $c(Y)=3$, then there is a rainbow induced copy of $\mathcal{B}_2$ by $\{\emptyset, Y, W_0, [n]\}$; 
if $c(Y)=4$, then there is a rainbow induced copy of $\mathcal{B}_2$ by $\{\emptyset, Y, W_0, X_3\}$, a contradiction. 
Therefore, $c(Y)= 1$.

We next suppose that $Y \in \mathcal{B}_{[\emptyset, X_0)}$, 
that is, $\emptyset \leq Y < X_0 < X_2$. Since $c(\emptyset) = 1$, we may assume $Y \neq \emptyset$.
By the choice of $X_2$, we see $c(Y) \neq 2$.
Let $a\in X_0\setminus Y$ and $b\in X_3\setminus X_0$. Then, 
$X_3\setminus \{a\} \in \mathcal{B}_{(Y, X_3)}\setminus \mathcal{B}_{[X_0, X_3)}$
and $X_3\setminus \{b\}\in \mathcal{B}_{(X_0, X_3)}$, 
Note that $X_3\setminus \{a\}\nsim X_3\setminus \{b\}$.
By the fact obtained in the previous paragraph, $c(X_3\setminus \{a\})=1$. 
By Claim \ref{Case1claim1}, $c(X_3\setminus \{b\})=2$.
If $c(Y) = 3$ or $c(Y) = 4$, 
then there is a rainbow induced copy of $\mathcal{B}_2$
by $\{Y, X_3\setminus \{a\}, X_3\setminus \{b\}, [n]\}$ or $\{Y, X_3\setminus \{a\}, X_3\setminus \{b\}, X_3\}$, respectively, a contradiction.
Therefore, 
$c(Y)= 1$, 
and this completes the proof of Claim \ref{Case1claim2}.
\end{proof}

By Claims \ref{Case1claim1} and \ref{Case1claim2}, we show the colors of the sets in $\mathcal{B}_{[\emptyset, X_3)}\setminus \{X_0\}$.
We now consider the colors of the sets in $\mathcal{B}_n\setminus \mathcal{B}_{[\emptyset, X_3)}$ under two subcases, 
based on the color of $X_0$.

\begin{subcase}
$c(X_0) =3$.    
\end{subcase}

In this subcase, we will show that $c$ is of Type 2 with $X_0$ and $Y_0 = X_3$.

We now prove that $c(Y)=4$ for each
$Y\in \mathcal{B}_{[X_0, [n]]}\setminus \mathcal{B}_{[X_0, X_3]}$.
Let $Y\in \mathcal{B}_{[X_0, [n]]}\setminus \mathcal{B}_{[X_0, X_3]}$. 
Suppose that $Y \notin \mathcal{B}_{(X_3, [n]]}$.
Then $Y\notin \mathcal{B}_{[\emptyset, X_0]}\cup \mathcal{B}_{[X_3, [n]]}$ and $Y\nsim X_3$.
By Lemma~\ref{lem:TL1} with $X=X_0$, $Z=X_3$, and $W=Y$, there exists a set $Y_1 \in \mathcal{B}_{(X_0, X_3)}$ such that $Y_1 \nsim Y$.
By Claim \ref{Case1claim1}, $c(Y_1)=2$.
If $c(Y)=1$, then there is a rainbow induced copy of $\mathcal{B}_2$ by $\{X_0, Y, Y_1, [n]\}$;
if $c(Y)=2$, then there is a rainbow induced copy of $\mathcal{B}_2$ by $\{\emptyset, Y, X_3, [n]\}$;
if $c(Y)=3$, then there is a rainbow induced copy of $\mathcal{B}_2$ by $\{\emptyset, Y, Y_1, [n]\}$, a contradiction.
Therefore, $c(Y)= 4$.

We next suppose that $Y \in \mathcal{B}_{(X_3, [n]]}$. Since $c([n]) = 4$, we may assume $Y \neq [n]$.
We take $Y_2=(X_3\setminus \{a\})\cup \{b\}$, where $a\in X_3\setminus X_0$ and $b\in Y\setminus X_3$. Thus, $Y_2 <Y$, $Y_2\nsim X_3$, and $Y_2\in \mathcal{B}_{[X_0, [n]]}\setminus \left(\mathcal{B}_{[X_0, X_3]}\cup \mathcal{B}_{[X_3,[n]]}\right)$. By the previous paragraph, we have $c(Y_2)=4$.
By Lemma~\ref{lem:TL1} with $X=X_0$, $Z=X_3$, and $W=Y_2$, there exists $Y_3 \in \mathcal{B}_{(X_0, X_3)}$ with $Y_3 \nsim Y_2$. 
By Claim \ref{Case1claim1}, $c(Y_3)=2$. Note that $Y_3<Y$.
If $c(Y)=1$, then there is a rainbow induced copy of $\mathcal{B}_2$ by $\{X_0, Y_2, Y_3, Y\}$;
if $c(Y)=2$, then there is a rainbow induced copy of $\mathcal{B}_2$ by $\{\emptyset, Y_2, X_3, Y\}$;
if $c(Y)=3$, then there is a rainbow induced copy of $\mathcal{B}_2$ by $\{\emptyset, Y_2, Y_3, Y\}$, 
a contradiction. 
Therefore, $c(Y)= 4$, as desired.

In the last of this subcase, we prove that $c(Y)\in\{ 1, 4\}$
for each set $Y\in \mathcal{B}_n\setminus (\mathcal{B}_{[\emptyset, X_3]}\cup \mathcal{B}_{[X_0, [n]]})$.
Let $Y\in \mathcal{B}_n\setminus (\mathcal{B}_{[\emptyset, X_3]}\cup \mathcal{B}_{[X_0, [n]]})$. Then $Y\nsim X_3$ and $Y\notin \mathcal{B}_{[\emptyset, X_0]}\cup \mathcal{B}_{[X_3, [n]]}$.
By Lemma \ref{lem:TL1} with $X=X_0$, $Z=X_3$, and $W=Y$, there exists a set $W_0\in \mathcal{B}_{(X_0, X_3)}$ such that $Y\nsim W_0$. By Claim \ref{Case1claim1}, $c(W_0)=2$.
If $c(Y)=2$ or $c(Y)=3$, then there is a rainbow induced copy of $\mathcal{B}_2$ by $\{\emptyset, Y, X_3, [n]\}$ or $\{\emptyset, Y, W_0, [n]\}$, respectively,
a contradiction.
Therefore, $c(Y)\in\{ 1,  4\}$.

Thus, $c$ is of Type $2$, which completes the proof of Subcase 1.1.

\begin{subcase}
$c(X_0)=4$.   
\end{subcase}

In this subcase, we will show that $c$ is of Type $3$-$1$ with $X_0$ and $Y_0 = X_3$.
%Since $X_0 < X_2 < X_3$, we have $|X_3| \geq |X_0| + 2$.

We first claim that $c(Y)=4$ for every
$Y \in \mathcal{B}^{X_3\uparrow}_{(X_0, [n]]}\setminus \mathcal{B}_{(X_0, X_3]}$.
Note that $c([n])=4$. Then let $Y\in \mathcal{B}^{X_3\uparrow}_{(X_0, [n]]}\setminus (\mathcal{B}_{(X_0, X_3]}\cup\{[n]\})$.
Suppose first that $Y\notin \mathcal{B}_{(X_3, [n])}$. Then $Y\nsim X_3$.
Because
$\mathcal{B}^{X_3\uparrow}_{(X_0, [n]]}
=\{X\in \mathcal{B}_{(X_0, [n]]}: X\cap (X_3\setminus X_0)\neq \emptyset\}$, we can
choose $a\in Y\cap (X_3\setminus X_0)$ and let $Y_2=X_0\cup\{a\}$.
Then $Y_2\in \mathcal{B}_{(X_0, X_3)}$. 
Since $a\in Y$ and $X_0\subseteq Y$, we have $Y_2<Y$.
Note that $Y\notin \mathcal{B}_{[\emptyset, X_0]}\cup \mathcal{B}_{[X_3, [n]]}$.
By Lemma~\ref{lem:TL1} with $X=X_0$, $Z=X_3$, and $W=Y$, there exists $Y'_2\in \mathcal{B}_{(X_0, X_3)}$ with $Y'_2\nsim Y$.
By Claim~\ref{Case1claim1}, $c(Y_2)=c(Y'_2)=2$.
If $c(Y)=1$, then there is a rainbow induced copy of $\mathcal{B}_2$ by $\{Y_2, Y, X_3, [n]\}$;
if $c(Y)=2$, then there is a rainbow induced copy of $\mathcal{B}_2$ by $\{\emptyset, Y, X_3, [n]\}$;
if $c(Y)=3$, then there is a rainbow induced copy of $\mathcal{B}_2$ by $\{\emptyset, Y, Y'_2, [n]\}$, 
a contradiction.
Therefore, $c(Y)= 4$.

Suppose next that $Y \in \mathcal{B}_{(X_3, [n])}$.
Choose $a\in X_3\setminus X_0$, $b\in X_3\setminus (X_0\cup \{a\})$, and $c\in Y\setminus X_3$. Then let $Y_3=X_0\cup \{a\}$ and $Y_4=(X_3\setminus \{b\})\cup \{c\}$.
Thus, $Y_3\in \mathcal{B}_{(X_0, X_3)}$, $Y_4\in \mathcal{B}_{[Y_3, Y)}$ and $Y_4\nsim X_3$.
By Claim \ref{Case1claim1}, $c(Y_3)=2$.
Since $Y_4 \in \mathcal{B}_{[Y_3, Y)}\subseteq \left(\mathcal{B}^{X_3\uparrow}_{(X_0, [n]]}\setminus \mathcal{B}_{(X_0, X_3]}\right)\setminus \mathcal{B}_{[X_3, [n]]}$, it follows from the previous paragraph that $c(Y_4)=4$.
If $c(Y)=1$, then there is a rainbow induced copy of $\mathcal{B}_2$ by $\{Y_3, Y_4, X_3, Y\}$;
if $c(Y)=2$, then there is a rainbow induced copy of $\mathcal{B}_2$ by $\{\emptyset, Y_4, X_3, Y\}$, a contradiction.
Recall that $Y_4\nsim X_3$, and hence $Y_4\notin \mathcal{B}_{[\emptyset, X_0]}\cup \mathcal{B}_{[X_3, [n]]}$. By Lemma \ref{lem:TL1} with $X=X_0$, $Z=X_3$, and $W=Y_4$, there exists a set
$Y_5 \in \mathcal{B}_{(X_0, X_3)}$
such that $Y_5\nsim Y_4$, and $c(Y_5)=2$, by Claim \ref{Case1claim1}.
If $c(Y)=3$, then there is a rainbow induced copy of $\mathcal{B}_2$ by $\{\emptyset, Y_4, Y_5, Y\}$, a contradiction.
Therefore, $c(Y) = 4$, 
as claimed.

In the last of subcase 1.2, 
we prove that $c(Y) \in \{1, 4\}$ for each set $Y \in \mathcal{B}_n\setminus (\mathcal{B}_{[\emptyset, X_3]}\cup \mathcal{B}^{X_3\uparrow}_{(X_0, [n]]})$.
Let $Y \in \mathcal{B}_n\setminus (\mathcal{B}_{[\emptyset, X_3]}\cup \mathcal{B}^{X_3\uparrow}_{(X_0, [n]]})$.
Then $Y\nsim X_3$ and $Y\notin \mathcal{B}_{[\emptyset, X_0]}\cup \mathcal{B}_{[X_3, [n]]}$. By Lemma \ref{lem:TL1} with $X=X_0$, $Z=X_3$, and $W=Y$, there exists a set $Y'\in \mathcal{B}_{(X_0, X_3)}$ such that $Y'\nsim Y$.
By Claim \ref{Case1claim1}, $c(Y') = 2$.
If $c(Y)=2$, then there is a rainbow induced copy of $\mathcal{B}_2$ by $\{\emptyset, Y, X_3, [n]\}$;
if $c(Y)=3$, then there is a rainbow induced copy of $\mathcal{B}_2$ by $\{\emptyset, Y, Y', [n]\}$, 
a contradiction. Therefore, $c(Y)\in\{ 1, 4\}$.

Then $c$ is of Type $3$-$1$, 
as desired.

\begin{case}
There exists
$Y_0 \in \mathcal{B}_{(X_3, [n])}$ with $c(Y_0)\in\{1, 2\}$.
\end{case}

This case is symmetric to Case~\ref{case1}. Note that Type 3-2 is symmetric to Type 3-1.  
Therefore, we conclude that $c$ is of Type 2 or Type 3-2.

\begin{case}
All sets in $\mathcal{B}_{[\emptyset, X_2)}$ are colored $1$ and all sets in $\mathcal{B}_{(X_3, [n]]}$ are colored $4$. 
\end{case}

Let $\mathcal{X}_2=\{X\in \mathcal{B}_n : c(X)=2\}$ and $\mathcal{X}_3=\{X\in \Bn : c(X)=3\}$. It is clear that $X_2\in \mathcal{X}_2$ and $X_3\in \mathcal{X}_3$.
Recall that $c(X) \neq 2$ for each $X$ with $|X|<|X_2|$
and $c(X) \neq 3$ for each $X$ with $|X|>|X_3|$.
Therefore, $|X|\geq |X_2|$ for each $X\in \mathcal{X}_2$ and $|X|\leq |X_3|$ for each $X\in \mathcal{X}_3$.  

We will later divide this case further into three subcases depending on $|\mathcal{X}_2|$ and $|\mathcal{X}_3|$, 
and show that $c$ is of Type $4$-$1$ or $4$-$2$, 
respectively.
Before that we give the following claim.
\begin{claim}\label{claim12}
$|\mathcal{X}_2|=1$ or $|\mathcal{X}_3|=1$.
\end{claim}
\begin{proof}
Suppose that $|\mathcal{X}_2| \geq 2$ and $|\mathcal{X}_3| \geq 2$. Then there exist distinct sets $X_2, X_2' \in \mathcal{X}_2$ and $X_3, X_3' \in \mathcal{X}_3$. Recall that $|X| \geq |X_2|$ for each $X \in \mathcal{X}_2$ and $|X| \leq |X_3|$ for each $X \in \mathcal{X}_3$. Therefore, $|X_2'| \geq |X_2|$ and $|X_3'| \leq |X_3|$. By Lemma~\ref{lem-x-y}, we have $X_2 \sim X_3'$, $X_2' \sim X_3'$, and $X_2' \sim X_3$. Since $X_2 < X_3$ and all sets in $\mathcal{B}_{[\emptyset, X_2)}$ are colored $1$ while all sets in $\mathcal{B}_{(X_3, [n]]}$ are colored $4$, we obtain that $X_3' \not\leq X_2$ and $X_3 \not\leq X_2'$.

Assume that $|X_3| = |X_2| + 1$. Since $X_3' \not\leq X_2$ and $X_3' \sim X_2$, it must be that $X_2 < X_3'$. Since $|X_3'| \leq |X_3|$, we have $|X_3'| = |X_2| + 1 = |X_3|$. Similarly, since $X_3 \not\leq X_2'$ and $X_2' \sim X_3$, we deduce that $X_2' < X_3$, implying $|X_2'| = |X_2| = |X_3'|-1$. The relation $X_2' \sim X_3'$ together with the fact that $|X_2'| < |X_3'|$ implies that $X_2' < X_3'$. Consequently, 
$X_2<X_3$, $X_2<X_3'$, $X'_2<X_3$, and $X_2'<X_3'$, and hence
both $X_2 \cup X_2' \leq X_3$ and $X_2 \cup X_2' \leq X_3'$ hold. Since $|X_2'| = |X_2|$ and $|X_3| = |X_2| + 1$, it follows that $X_3 = X_2 \cup X_2'$. Thus, $X_3 \leq X_3'$, contradicting the fact that $|X_3| = |X_3'|$ and $X_3\neq X_3'$. 

Therefore, $|X_3| > |X_2| + 1$. 
Recall that $|X_2'| \geq |X_2|$ and $X_3 \not\leq X_2'$, and hence $X_2'\notin \mathcal{B}_{[\emptyset, X_2]}\cup \mathcal{B}_{[X_3, [n]]}$.
By Lemma \ref{lem:TL1} with $X=X_2$, $Z=X_3$, and $W=X_2'$, we can find a set $Y_1 \in \mathcal{B}_{(X_2, X_3)}$ with $|Y_1| = |X_2| + 1$ such that $Y_1 \nsim X_2'$. Similarly, we can also find a set $Y_2 \in \mathcal{B}_{(X_2, X_3)}$ with $|Y_2| = |X_2| + 1$ such that $Y_2 \nsim X_3'$. If $c(Y_1) = 3$, then there is a rainbow induced copy of $\mathcal{B}_2$ by $\{\emptyset, Y_1, X_2', [n]\}$; if $c(Y_1) = 4$, then there is a rainbow induced copy of $\mathcal{B}_2$ by $\{\emptyset, Y_1, X_2', X_3\}$, a contradiction. Therefore, $c(Y_1) \in \{1, 2\}$. If $c(Y_2) = 1$, then there is a rainbow induced copy of $\mathcal{B}_2$ by $\{X_2, Y_2, X_3', [n]\}$; if $c(Y_2) = 2$, then there is a rainbow induced copy of $\mathcal{B}_2$ by $\{\emptyset, Y_2, X_3', [n]\}$, a contradiction. Hence, $c(Y_2) \in \{3, 4\}$. 
Since $c(Y_1)\in\{1, 2\}$ and $c(Y_2)\in\{3, 4\}$, we have $Y_1\neq Y_2$.
Moreover $|Y_1|=|Y_2|=|X_2|+1$ implies $Y_1\nsim Y_2$. As $Y_1, Y_2\in\mathcal{B}_{(X_2, X_3)}$, we also have
$\emptyset<X_2<Y_1, Y_2<X_3<[n]$.
Choose the bottom set
$B\in\{\emptyset, X_2\}$ so that 
$c(B) \neq c(Y_1)$, 
and the top set
$T\in\{X_3, [n]\}$ so that $c(T)\neq c(Y_2)$.
Then there is a rainbow induced copy of $\mathcal{B}_2$ by $\{B, Y_1, Y_2, T\}$, a contradiction.
Therefore, the claim holds.
\end{proof}
Based on Claim \ref{claim12}, 
we consider the following three cases.
\begin{subcase}\label{subcase31}
$|\mathcal{X}_2|=1$ and $|\mathcal{X}_3|> 1$.    
\end{subcase}
In this subcase, we show that $c$ is of Type~4-1
with $\{X_0\} = \mathcal{X}_2 = \{X_2\}$ and $\mathcal{Y}_0 = \mathcal{X}_3$.

For each $X\in \mathcal{X}_3$, since $c(X_2)=2$ and $c(X)=3$, it follows from Lemma~\ref{lem-x-y} that $X\sim X_2$. Since all sets in $\mathcal{B}_{[\emptyset, X_2)}$ are colored $1$, $X> X_2$ for each $X\in \mathcal{X}_3$.

\begin{claim}\label{claim-subcase-3-1-1}
$c(Y) = 1$ for each $Y \in \bigcup_{X \in \mathcal{X}_3} \left( \mathcal{B}_{[\emptyset, X)} \setminus \mathcal{B}_{[X_2, X)} \right)$.   
\end{claim}

\begin{proof}
Let $Y \in \bigcup_{X \in \mathcal{X}_3} \left( \mathcal{B}_{[\emptyset, X)} \setminus \mathcal{B}_{[X_2, X)} \right)$.
Since $c(Y) = 1$ if $Y \in \mathcal{B}_{[\emptyset, X_2)}$, it suffices to consider $Y \notin \mathcal{B}_{[\emptyset, X_2)}.$
Note that  $Y\nsim X_2$.
Observe that there exists a set $X_3' \in \mathcal{X}_3$ such that $\emptyset < Y < X_3' < [n]$. 
If $c(Y) = 3$, then $\{\emptyset, Y, X_2, [n]\}$ forms a rainbow induced copy of $\mathcal{B}_2$; 
if $c(Y) = 4$, then $\{\emptyset, Y, X_2, X_3'\}$ forms a rainbow induced copy of $\mathcal{B}_2$, a contradiction.
Hence, $c(Y) \in \{1, 2\}$. 
Since $\mathcal{X}_2 = \{X_2\}$, we further conclude that $c(Y) = 1$ if 
$Y \notin \mathcal{B}_{[\emptyset, X_2)}$.
\end{proof}

\begin{claim}\label{claim-subcase-3-1-2}
$c(Y)=4$ for any $Y\in \mathcal{B}_{(X_2, [n]]}\setminus \mathcal{X}_3$.
\end{claim}
\begin{proof}
Let $Y\in \mathcal{B}_{(X_2, [n]]}\setminus \mathcal{X}_3$.
Note that $c(Y) = 4$ if $Y \in \mathcal{B}_{(X_3, [n]]}$. Therefore, we only need to consider $Y\notin \mathcal{B}_{(X_3, [n]]}$. 
We first suppose that $Y \in \mathcal{B}_{(X_2, X_3)} \setminus \mathcal{X}_3$. If $\mathcal{B}_{(X_2, X_3)}\subseteq \mathcal{X}_3$, then there are no sets in $\mathcal{B}_{(X_2, X_3)} \setminus \mathcal{X}_3$. 

Now suppose that $\mathcal{B}_{(X_2, X_3)}\setminus \mathcal{X}_3\neq \emptyset$. In particular, $|X_3|>|X_2|+1$. By the definitions of $\mathcal{X}_2$ and $\mathcal{X}_3$, we know that $c(Y)\in\{1, 4\}$.
Let $\mathcal{L} = \{X \in \mathcal{B}_{(X_2, X_3)} : |X| = |X_2| + 1\}$. Assume $\mathcal{L} \subseteq \mathcal{X}_3$. By Lemma \ref{lem:TL1} with $X=X_2$, $Z=X_3$, and $W=Y$, 
there exists $X_3' \in \mathcal{L}$ such that $Y \nsim X_3'$. If $c(Y) = 1$, then the set $\{X_2, Y, X_3', [n]\}$ forms a rainbow induced copy of $\mathcal{B}_2$, a contradiction. Therefore, $c(Y) = 4$ in this case.

Assume instead $\mathcal{L} \not\subseteq \mathcal{X}_3$.
We will show that there exists a set $Y_1 \in \mathcal{L} \setminus \mathcal{X}_3$ such that $c(Y_1)=4$.
If $\mathcal{L} \cap \mathcal{X}_3 \neq \emptyset$, then there exist sets $Y_1 \in \mathcal{L} \setminus \mathcal{X}_3$ and $X_3' \in \mathcal{L} \cap \mathcal{X}_3$ such that $Y_1 \nsim X_3'$. 
Otherwise $\mathcal{L} \cap \mathcal{X}_3 = \emptyset$. 
Recall that $c(X)\neq 3$ for each $X$ with $|X|>|X_3|$. Therefore, $|X|\leq |X_3|$ and $X>X_2$ for any $X\in \mathcal{X}_3$. Thus, there exists $X_3' \in \mathcal{X}_3 \setminus \{X_3\}$ such that 
$X'_3\notin \mathcal{B}_{[\emptyset, X_2]}\cup\mathcal{B}_{[X_3, [n]]}$. By Lemma \ref{lem:TL1} with $X=X_2$, $Z=X_3$, and $W=X'_3$, 
there exists $Y_1 \in \mathcal{L}$ such that $Y_1 \nsim X_3'$. 
If $c(Y_1) = 1$, then $\{X_2, Y_1, X_3', [n]\}$ forms a rainbow induced copy of $\mathcal{B}_2$, a contradiction, and so $c(Y_1) = 4$.

Suppose $Y \in \mathcal{L} \setminus \left(\mathcal{X}_3\cup\{Y_1\}\right)$, we observe that $Y\nsim Y_1$. If $c(Y) = 1$, then $\{X_2, Y_1, Y, X_3\}$ would form a rainbow induced copy of $\mathcal{B}_2$, a contradiction. Hence, $c(Y) = 4$. 
Suppose $Y \in \mathcal{B}_{(X_2, X_3)} \setminus \left(\mathcal{L}\cup \mathcal{X}_3\right)$. By Lemma \ref{lem:TL1} with $X=X_2$, $Z=X_3$, and $W=Y$, there exists $Y_1' \in \mathcal{L}$ such that $Y \nsim Y_1'$. If $c(Y) = 1$, then there is a rainbow induced copy of $\mathcal{B}_2$ by $\{X_2, Y_1', Y, X_3\}$ (when $Y_1' \in \mathcal{L} \setminus \mathcal{X}_3$) or $\{X_2, Y_1', Y, [n]\}$ (when $Y_1' \in \mathcal{L} \cap \mathcal{X}_3$), a contradiction. Therefore, $c(Y) = 4$.

Next, suppose $Y\notin \mathcal{B}_{(X_2, X_3)}\setminus \mathcal{X}_3$. Note that $Y\notin \mathcal{B}_{(X_3, [n]]}$. We have $\emptyset< X_2<Y\nsim X_3<[n]$.
From the definitions of $\mathcal{X}_2$ and $\mathcal{X}_3$, we know that $c(Y)\in \{1, 4\}$.
If $c(Y)=1$, then there is a rainbow induced copy of $\mathcal{B}_2$ by $\{X_2, Y, X_3, [n]\}$, a contradiction. Therefore, $c(Y)=4$, and hence $c(Y)=4$ for each $Y\in \mathcal{B}_{(X_2, [n]]}\setminus \mathcal{X}_3$, as desired.
\end{proof}

By Claims~\ref{claim-subcase-3-1-1} and \ref{claim-subcase-3-1-2}, together with the definitions of $X_2$ and $\mathcal{X}_3$, the colors of the sets in $\mathcal{B}_{[X_2,[n]]}\cup\bigl(\bigcup_{X\in\mathcal{X}_3}\mathcal{B}_{(\emptyset,X)}\bigr)$ are determined.
Finally, suppose $Y\in \Bn\setminus\left(\mathcal{B}_{[X_2, [n]]}\cup (\bigcup_{X\in\mathcal{X}_3} \mathcal{B}_{(\emptyset, X)})\right)$. 
Note that $Y\nsim X_2$ and $Y\nsim X_3$. 
Recall that $X> X_2$ for each $X\in \mathcal{X}_3$ and $|\mathcal{X}_2|=1$. Therefore, $Y\notin \mathcal{X}_2\cup\mathcal{X}_3$, and hence $c(Y)\in \{1, 4\}$.
Then $c$ is of Type 4-1 under the condition $|\mathcal{Y}_0|\neq 1$, as desired.

\begin{subcase}
$|\mathcal{X}_2|> 1$ and $|\mathcal{X}_3|=1$.
\end{subcase}
This subcase is symmetric to Subcase~\ref{subcase31}. Note that Type 4-2 is symmetric to Type 4-1.  
Therefore, we conclude that $c$ is of Type 4-2 under the condition $|\mathcal{X}_0|\neq 1$.

\begin{subcase}
$|\mathcal{X}_2|=1$ and $|\mathcal{X}_3|=1$.    
\end{subcase}
In this subcase, we show that $c$ is of Type~4-1 under the condition $|\mathcal{Y}_0| = 1$, where $X_0 = X_2$ and $\mathcal{Y}_0 = \mathcal{X}_3 = \{X_3\}$, or of Type~4-2 under the condition $|\mathcal{X}_0| = 1$, where $Y_0 = X_3$ and $\mathcal{X}_0 = \mathcal{X}_2 = \{X_2\}$. 
Specifically, Types~$4$-$1$ and $4$-$2$ differ only in the color on $\mathcal{B}_{(X_2,X_3)}$. 

Since $\mathcal{X}_2=\{X_2\}$ and $\mathcal{X}_3=\{X_3\}$, it follows that $c(Y)\in \{1, 4\}$ for each $Y\in \mathcal{B}_n\setminus\{X_2, X_3\}$.
Therefore, we just need to consider the colors of the sets in $(\mathcal{B}_{[\emptyset, X_3]}\cup \mathcal{B}_{[X_2, [n]]})\setminus \{X_2,X_3\}$.

Let $Y\in \mathcal{B}_{[\emptyset, X_3)}\setminus \mathcal{B}_{[X_2, X_3)}$.
Note that $c(Y)=1$ if $Y\in\mathcal{B}_{[\emptyset, X_2)}$.
Suppose $Y\notin \mathcal{B}_{[\emptyset, X_2)}$.
Then $Y\nsim X_2$.
If $c(Y)=4$, then there is a rainbow induced copy of $\mathcal{B}_2$ by $\{\emptyset, X_2, Y, X_3\}$, a contradiction. Thus, $c(Y)=1$.
Next, let $Y\in \mathcal{B}_{[X_2, [n]]}\setminus \mathcal{B}_{[X_2, X_3]}$.
Note that $c(Y)=4$ if $Y\in\mathcal{B}_{(X_3, [n]]}$.
Suppose $Y\notin \mathcal{B}_{(X_3, [n]]}$.
Then $Y\nsim X_3$.
If $c(Y)=1$, then there is a rainbow induced copy of $\mathcal{B}_2$ by $\{X_2, Y, X_3, [n]\}$, a contradiction. Thus, $c(Y)=4$.

In the end, we claim that all sets in $\mathcal{B}_{(X_2,X_3)}$ have the same color, which is either $1$ or $4$.
If $|X_3|=|X_2|+1$, then $\mathcal{B}_{(X_2, X_3)}=\emptyset$, and the claim trivially holds.
Suppose that $|X_3|>|X_2|+1$. 
Let $\mathcal{D}_i=\{X\in\mathcal{B}_{(X_2, X_3)} :  |X|=i\}$, where $|X_2|< i<|X_3|$.
Considering any two sets $Y, Y'\in\mathcal{D}_i$, we have $\emptyset<X_2<Y\nsim Y'<X_3<[n]$. If $c(Y)=1$ and 
$c(Y')=4$, then 
there is a rainbow induced copy of $\mathcal{B}_2$ by $\{X_2, Y, Y', X_3\}$, a contradiction. Therefore, for each $i$ with $|X_2|<i<|X_3|$, all sets in $\mathcal{D}_i$ have the same color, which is either $1$ or $4$. 

For any two integers $i, j$ with $|X_2|<i\neq j<|X_3|$, if $c(Y)=1$ for each $Y\in \mathcal{D}_i$ and $c(Y')=4$ for each $Y'\in \mathcal{D}_j$, then we can find two sets $Y_i\in \mathcal{D}_i$ and $Y_j\in \mathcal{D}_j$ such that $Y_i\nsim Y_j$, and hence there is a rainbow induced copy of $\mathcal{B}_2$ by $\{X_2, Y_i, Y_j, X_3\}$, a contradiction. Therefore, all sets in $\mathcal{B}_{(X_2,X_3)}$ have the same color, which is either $1$ or $4$.

Thus, $c$ is either of Type~4-1 under the condition $|\mathcal{Y}_0| = 1$, or of Type~4-2 under the condition $|\mathcal{X}_0| = 1$.
\end{proof}

\subsection{An exact $k$-coloring $c$ without a rainbow induced copy of $\mathcal{B}_2$ with $c(\emptyset)= c([n])$ and $k\geq 4$}\label{subsection25}

Before stating the complete characterization in Theorem \ref{Structural-B2-1} in Section 2.6, we 
introduce one more exact $k$-coloring of $\mathcal{B}_n$ of Type~5, where $c(\emptyset)=c([n])$.
Let $W, Y$ be two sets with $\emptyset< W<Y<[n]$. Recall that
$$
\mathcal{B}^{Y\uparrow}_{(W, [n]]}=\bigcup_{Z \in \mathcal{B}_{(W, Y)}} \mathcal{B}_{[Z, [n]]}=\{Z\in \mathcal{B}_{(W, [n]]} :  Z\cap (Y\setminus W)\neq \emptyset\}, 
$$
and 
$$\mathcal{B}^{W\downarrow}_{[\emptyset, Y)}=\bigcup_{Z \in \mathcal{B}_{(W, Y)}} \mathcal{B}_{[\emptyset, Z]}=\{Z\in \mathcal{B}_{[\emptyset, Y)} :  Y\setminus W\not\subseteq Z\}.$$\\[2mm]
\emph{Type~5}: The maximum length of any rainbow chain in $\mathcal{B}_n$ is $4$, and the following two conditions holds.
\begin{itemize}
\item[$(1)$] For any rainbow $4$-chain $(W, X, Y, [n])$, 
all sets in the following families are monochromatically colored
so that no two sets from different families share the same color:
$$
\left(\mathcal{B}^{Y\uparrow}_{(W, [n]]}\cup \mathcal{B}^{W\downarrow}_{[\emptyset, Y)}\right)\setminus \mathcal{B}_{[W, Y]}, \quad
\{W\}, \quad
\mathcal{B}_{(W, Y)}, \quad
\{Y\}.
$$
Moreover, each set in $\mathcal{B}_{[\emptyset, Y)}\setminus \mathcal{B}^{W\downarrow}_{[\emptyset, Y)}$
has the same color as the sets in the first or the last family, and each set in $\mathcal{B}_{(W, [n]]}\setminus \mathcal{B}^{Y\uparrow}_{(W, [n]]}$
has the same color as the sets in the first or the second family.

\item[$(2)$] For any rainbow $3$-chain $(W, Y, [n])$ that cannot be extended to a rainbow $4$-chain by adding another set, the union 
$\mathcal{B}_{[\emptyset, Y]} \cup \mathcal{B}_{[W, [n]]}$
contains $3$ colors.
\end{itemize}

Let $W, Y$ be two sets such that $(W, Y, [n])$ is a rainbow chain. 
Note that an exact $k$-coloring of Type 5
is symmetric by exchanging the roles of $W$ and $Y$, 
those of $\emptyset$ and $[n]$, and so on, and there are at most $4$ colors appearing in $\mathcal{B}_{[\emptyset, Y]} \cup \mathcal{B}_{[W, [n]]}$.
By the definitions of 
$\mathcal{B}^{Y\uparrow}_{(W, [n]]}$
and $\mathcal{B}^{W\downarrow}_{[\emptyset, Y)}$, we have $\mathcal{B}_{(W, [n]]}\setminus \mathcal{B}^{Y\uparrow}_{(W, [n]]}=\{Z\in \mathcal{B}_{(W, [n]]} :  Z\cap (Y\setminus W)= \emptyset\}$ and $\mathcal{B}_{[\emptyset, Y)}\setminus \mathcal{B}^{W\downarrow}_{[\emptyset, Y)}=\{Z\in \mathcal{B}_{[\emptyset, Y)} :  Y\setminus W\subseteq Z\}$.

\begin{remark}
In Type~5(1), we have the following.
\begin{itemize}
    \item[$(i)$] The same conclusion as in Type 5 $(1)$ also holds for any rainbow $4$-chain of the form $(\emptyset, W, X, Y)$.
    \item[$(ii)$] The colors of sets in $\mathcal{B}_{[\emptyset, Y]}$ are identical to those in Type~4--1, 
after replacing $[n]$ by $Y$, $X_0$ by $W$, and $\mathcal{Y}_0$ by $\mathcal{B}_{(W, Y)}$.
    \item[$(iii)$] The colors of sets in $\mathcal{B}_{[W,[n]]}$ are identical to those in Type~4--2, after replacing $\emptyset$ by $W$, $Y_0$ by $Y$, and $\mathcal{X}_0$ by $\mathcal{B}_{(W, Y)}$.
\end{itemize}
\end{remark}

\begin{lemma}\label{emptyset=n}
Let $k, n$ be two integers with $k\geq 4$ and $n>2$.
Consider an exact $k$-coloring $c$ of $\mathcal{B}_n$
with $c(\emptyset)= c([n])$.
Then there is no rainbow induced copy of $\mathcal{B}_2$
if and only if the coloring $c$ is of Type $5$.
\end{lemma}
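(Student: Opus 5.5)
The strategy is to reduce to the already-established lemmas (Lemma~\ref{k4}, Lemma~\ref{k5}) by restricting the coloring to intervals $\mathcal{B}_{[\emptyset, Y]}$ and $\mathcal{B}_{[W,[n]]}$, both of which are Boolean lattices. Throughout we normalize $c(\emptyset)=c([n])=1$.

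\textbf{``If'' part.} Suppose $c$ is of Type~5 and $\{W_1,W_2,W_3,W_4\}$ is a rainbow induced $\mathcal{B}_2$ with $W_1<W_2,W_3<W_4$.

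\begin{itemize}
\item Since $c(\emptyset)=c([n])$, at most one of $\emptyset,[n]$ lies in the copy.
\item Complete the chain by adding $\emptyset$ below $W_1$ (or $[n]$ above $W_4$) where the color is new. Either $(W_1,W_2,W_4)$ or $(W_1,W_3,W_4)$, possibly extended by $[n]$ or by $\emptyset$, is then a rainbow chain of length $3$ or $4$ ending at $[n]$ or starting at $\emptyset$.
\item If a rainbow $4$-chain $(W,X,Y,[n])$ arises, condition (1) of Type~5 describes the coloring on $\mathcal{B}_{[\emptyset,Y]}\cup\mathcal{B}_{[W,[n]]}$. By Remark (ii)/(iii) this restriction is a Type~4-1 / Type~4-2 coloring, which the ``if'' part of Lemma~\ref{k4} shows has no rainbow $\mathcal{B}_2$ inside those intervals.
\item Every set of the copy lies in $\mathcal{B}_{[\emptyset,W_4]}$ or $\mathcal{B}_{[W_1,[n]]}$. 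So we choose the chain so that the copy sits inside $\mathcal{B}_{[\emptyset,W_4]}$ (taking $Y=W_4$) or inside $\mathcal{B}_{[W_1,[n]]}$.
\item If only rainbow $3$-chains $(W,Y,[n])$ arise, condition (2) says $\mathcal{B}_{[\emptyset,Y]}\cup\mathcal{B}_{[W,[n]]}$ carries only $3$ colors, which excludes a rainbow $\mathcal{B}_2$ sitting there.
\end{itemize}

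The delicate point is arranging that the four sets fall in a single interval where the Type~4 structure applies. I would handle this by taking $W=W_1$, $Y=W_4$ directly: $(W_1,W_2,W_4,[n])$ or a similar chain is rainbow whenever $c([n])$ differs from the colors on it. If $c([n])$ equals one of the four colors, we use $\emptyset$ at the bottom instead, via Remark~(i).

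\textbf{``Only if'' part, step 1: chains have length at most $4$.} Assume there is no rainbow $\mathcal{B}_2$. Suppose a rainbow $5$-chain exists. It contains at most one of $\emptyset,[n]$, so by symmetry we may take one not containing $[n]$, say $(Z_1,\dots,Z_5)$ with $Z_5<[n]$ or $Z_5=[n]$ dropped. Restricting to $\mathcal{B}_{[Z_1,[n]]}$ or a similar interval gives a coloring whose bottom and top colors differ and which uses at least $5$ colors, hence (by Lemma~\ref{k5}) is of Type~1. Then $[n]$ (color $1$) together with the Type~1 structure yields a rainbow $\mathcal{B}_2$: take $X_0$, a set of $\mathcal{B}_{(X_0,Y_0)}$, a set of the last family incomparable to it, and top $[n]$. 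This needs a short case check, since color $1$ may coincide with one of the five Type~1 colors.

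\textbf{Step 2: the $4$-chain condition.} For a rainbow $4$-chain $(W,X,Y,[n])$:
\begin{itemize}
\item Apply Lemma~\ref{k4} to the restriction of $c$ to $\mathcal{B}_{[\emptyset,Y]}$. It is exact with $4$ colors (at most $4$ by step 1), and $c(\emptyset)\ne c(Y)$ because $c(\emptyset)=c([n])\ne c(Y)$. So the restriction is of Type $2$, $3$-$1$, $3$-$2$, $4$-$1$ or $4$-$2$.
\item Observation~\ref{obs1}, applied inside $\mathcal{B}_{[W,[n]]}$ (using that the top color equals the bottom color of $\mathcal{B}_n$), rules out every type except the Type~4 pattern with $\mathcal{Y}_0=\mathcal{B}_{(W,Y)}$.
\item Symmetrically, $\mathcal{B}_{[W,[n]]}$ must carry the Type~4-2 pattern.
\item Gluing the two descriptions gives condition (1). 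The remaining sets, those outside $\mathcal{B}^{Y\uparrow}\cup\mathcal{B}^{W\downarrow}$ restricted as stated, are forced into the allowed colors by exhibiting rainbow $\mathcal{B}_2$'s of the form $\{\emptyset,\cdot,\cdot,[n]\}$ or $\{W,\cdot,\cdot,Y\}$, using Lemma~\ref{lem:TL1} to find the needed incomparable set.
\end{itemize}

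\textbf{Step 3: the non-extendable $3$-chain condition.} If $(W,Y,[n])$ is a rainbow $3$-chain with a fourth color appearing in $\mathcal{B}_{[\emptyset,Y]}\cup\mathcal{B}_{[W,[n]]}$, then either it extends to a rainbow $4$-chain or yields a rainbow $\mathcal{B}_2$ via Lemma~\ref{lem:TL1}. This gives condition (2).

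\textbf{Main obstacle.} I expect the hardest step to be the gluing in step 2: verifying that the Type~4-1 structure below $Y$ and the Type~4-2 structure above $W$ agree on the overlap, and pinning down the colors of sets incomparable to both $W$ and $Y$.
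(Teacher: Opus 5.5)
There is a genuine gap in the core of the ``only if'' direction (your Step~2), and it comes from a misuse of the hypothesis $c(\emptyset)=c([n])$ that recurs throughout your proposal.

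\textbf{Step 2 of the ``only if'' direction.} Observation~\ref{obs1} has a hypothesis. It excludes Types $2$, $3$-$1$, $3$-$2$, $4$-$2$ on $\mathcal{B}_{[\emptyset,Y]}$ only once you already know that $\{W\}$, $\mathcal{B}_{(W,Y)}$, $\{Y\}$ are monochromatic in three distinct colors. Nothing in your plan supplies this, and ``using that the top color equals the bottom color'' does not. Likewise $\mathcal{Y}_0=\mathcal{B}_{(W,Y)}$ must be proved, not read off.

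The forcing tool you propose, rainbow copies $\{\emptyset,\cdot,\cdot,[n]\}$, can never occur, because $\emptyset$ and $[n]$ share a color. The missing idea is the reverse use of the hypothesis. Here $\emptyset$ serves as an extra bottom carrying $c([n])$, a color not among $c(W),c(X),c(Y)$. The paper proceeds in three steps:
\begin{itemize}
\item It applies Lemma~\ref{k4} to $\mathcal{B}_{[W,[n]]}$. The chain $(W,X,Y,[n])$ pins $X$ and $Y$ to specific families in each type.
\item In Types $2$, $3$-$1$, $3$-$2$, $4$-$1$ this yields explicit rainbow copies $\{\emptyset,(X\cup\{y\})\setminus\{x\},X,Y\}$ or $\{\emptyset,Y\setminus\{x\},Y\setminus\{y\},Y\}$.
\item In Type $4$-$2$, rainbow copies $\{\emptyset,Z,Z',Y\}$ force $\mathcal{X}_0=\mathcal{B}_{(W,Y)}$.
\end{itemize}
Only then can Observation~\ref{obs1} be applied in $\mathcal{B}_{[\emptyset,Y]}$.

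After that there is no gluing difficulty. Both descriptions agree on $\mathcal{B}_{[W,Y]}$, and Type~5(1) imposes nothing on sets incomparable to both $W$ and $Y$. So the obstacle you anticipate is not where the work lies. Also, ``at most $4$ colors by Step~1'' needs Lemma~\ref{k5} once more: a Type~1 coloring of the interval would contain a rainbow $5$-chain.

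\textbf{The ``if'' direction.} The same slip breaks this argument. When $c([n])$ is one of the four colors of the copy, putting $\emptyset$ at the bottom gives nothing new. The cases should be handled as follows:
\begin{itemize}
\item If the repeated color is $c(W_2)$ or $c(W_3)$, use the rainbow $4$-chain $(W_1,W_3,W_4,[n])$ or $(W_1,W_2,W_4,[n])$.
\item If it is $c(W_1)$, take the rainbow $3$-chain $(W_2,W_4,[n])$. If this chain is not extendable, Type~5(2) leaves only three colors on $\mathcal{B}_{[\emptyset,W_4]}$, which contains the copy. If it extends to $(W,X,Y,[n])$, then $Y\ge W_4$, so the copy lies in $\mathcal{B}_{[\emptyset,Y]}$. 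There the Remark~(ii) plus Lemma~\ref{k4} argument you sketch does work.
\item The case $c(W_4)=c([n])$ is symmetric, using $(W_1,W_2,[n])$ and $\mathcal{B}_{[W,[n]]}$.
\end{itemize}

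\textbf{Step 1.} Restricting to $\mathcal{B}_{[Z_1,[n]]}$ is faulty for two reasons. First, $c(Z_1)$ may equal $c([n])$. Second, in a Type~1 coloring of that interval the last family has color $c([n])$, so your copy with top $[n]$ is not rainbow. Instead, restrict to $\mathcal{B}_{[Z_1,Z_5]}$ with $Z_5<[n]$ and check which Type~1 color $c([n])$ repeats. Alternatively, normalize the chain to $(W_0,\dots,W_3,[n])$ and use $\{\emptyset,W_1',W_1,W_3\}$, as the paper does.
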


Before proving Lemma \ref{emptyset=n}, we show the following lemma, which is useful for our proofs.
\begin{lemma}\label{clm2}
Let $k, n$ be two integers with $n>2$ and $k\geq 4$. 
Consider an exact $k$-coloring $c$ of $\mathcal{B}_n$
with $c(\emptyset)= c([n])$. Then the maximum length of any rainbow chain in $\mathcal{B}_n$ is at most $4$.
\end{lemma}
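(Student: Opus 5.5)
We argue by contradiction, and we use the standing hypothesis of this subsection that $c$ has no rainbow induced copy of $\mathcal{B}_2$; without it the statement cannot hold. Suppose there is a rainbow $5$-chain $Z_1<Z_2<Z_3<Z_4<Z_5$. Let $a=c(\emptyset)=c([n])$. Since $\emptyset$ and $[n]$ share a color, at most one of them lies on the chain, and we do not need to know which.

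\textbf{Step 1: reduce to the interval and apply Lemma~\ref{k5}.} The interval $\mathcal{B}_{[Z_1,Z_5]}$ is isomorphic to $\mathcal{B}_m$ with $m=|Z_5|-|Z_1|\ge 4$. Induced copies of $\mathcal{B}_2$ inside this interval are induced copies in $\mathcal{B}_n$, so the restricted coloring has no rainbow induced $\mathcal{B}_2$. It is an exact $k'$-coloring with $k'\ge 5$, and its bottom $Z_1$ and top $Z_5$ receive different colors. By Lemma~\ref{k5}, $k'=5$ and the restriction is of Type~1 with respect to some $Z_1<X_0<Y_0<Z_5$ satisfying $|Y_0|\ge|X_0|+2$. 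Relabel colors so that
\[
\mathcal{F}_1=\mathcal{B}_{[Z_1,Y_0]}\setminus\mathcal{B}_{[X_0,Y_0]},\quad \{X_0\},\quad \mathcal{B}_{(X_0,Y_0)},\quad \{Y_0\},\quad \mathcal{F}_5=\mathcal{B}_{[X_0,Z_5]}\setminus\mathcal{B}_{[X_0,Y_0]}
\]
receive colors $1,2,3,4,5$ respectively. Note that $\emptyset\le Z_1$ and $Z_5\le[n]$, so $\emptyset$ lies below and $[n]$ above every set of the interval; in any case they are comparable to everything.

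\textbf{Step 2: rule out each value of $a$ with an explicit rainbow $\mathcal{B}_2$.} Fix $U\in\mathcal{B}_{(X_0,Y_0)}$, which has color $3$. By Lemma~\ref{lem:TL1} applied inside the interval, or by a direct swap construction, there is $V\in\mathcal{F}_1$ with $V\nsim U$. Similarly there is $W\in\mathcal{F}_5$ with $W\nsim U$; here we take $W$ above $X_0$, not below $Y_0$, and incomparable to $U$.
\begin{itemize}
\item[(i)] The family $\{\emptyset,U,V,Y_0\}$ has colors $a,3,1,4$. It is a rainbow induced $\mathcal{B}_2$ unless $a\in\{1,3,4\}$. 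This uses $\emptyset<V$, which holds since $V\ge Z_1$ and $V\neq\emptyset$ can be arranged.
\item[(ii)] The family $\{X_0,U,W,[n]\}$ has colors $2,3,5,a$. It is rainbow unless $a\in\{2,3,5\}$.
\end{itemize}
Together (i) and (ii) force $a=3$. In particular, $a\notin\{1,\dots,5\}$ is impossible, which covers the case where $\emptyset$ and $[n]$ carry a color absent from the interval.

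\textbf{Step 3: the case $a=3$.} We find $V'\in\mathcal{F}_1$ with $\emptyset<V'<Y_0$ and $V'\nsim X_0$. Take $V'=(X_0\setminus\{x\})\cup\{y\}$ with $x\in X_0\setminus Z_1$ and $y\in Y_0\setminus X_0$. Then $V'\ge Z_1$, $V'<Y_0$, and $V'$ does not contain $X_0$, so $V'\in\mathcal{F}_1$ and $c(V')=1$. The family $\{\emptyset,V',X_0,Y_0\}$ has colors $3,1,2,4$, which is a rainbow induced $\mathcal{B}_2$, a contradiction. Hence every rainbow chain has length at most $4$.

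The main obstacle is the bookkeeping in Steps~1 and~2. We must check that Lemma~\ref{k5} applies verbatim to the interval, which requires $m\ge 4$ and that the interval carries at least $5$ colors; both follow from the chain. We must also check that each witness set ($V$, $W$, $V'$) exists strictly inside the required families and is distinct from $\emptyset$ or $[n]$ when needed. If, for instance, $Z_1=\emptyset$, then $\emptyset$ plays the role of $Z_1$ with color $1$, and (i) already gives a contradiction directly. These degenerate coincidences should be handled case by case.
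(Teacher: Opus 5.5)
Your proof is correct. You are also right that the hypothesis ``no rainbow induced copy of $\mathcal{B}_2$'' has to be added: the paper's proof uses it too, through Lemma~\ref{k5} and the final contradiction, even though the statement omits it.

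The core is the same as the paper's. You apply Lemma~\ref{k5} to the interval spanned by the $5$-chain to get the Type~1 structure, and then exhibit a rainbow $\mathcal{B}_2$ through $\emptyset$. Your Step~3 family $\{\emptyset,V',X_0,Y_0\}$ is exactly the paper's $\{\emptyset,W_1',W_1,W_3\}$. The two arguments differ only in how the color $a=c(\emptyset)$ is located:
\begin{itemize}
\item The paper first normalizes the chain so that its top is $[n]$. It either replaces the top element by $[n]$, or deletes the element colored $c([n])$ and appends $[n]$. Then $a=c([n])$ is automatically the color of the top Type~1 family $\mathcal{B}_{[W_1,[n]]}\setminus\mathcal{B}_{[W_1,W_3]}$, and that single witness finishes the proof.
\item You keep the original chain, so $a$ is a priori unknown and possibly not among the five interval colors. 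You pay for this with the elimination in Step~2, which needs the two extra witnesses $V$ and $W$.
\end{itemize}
Both routes are short. The paper's normalization trick is the more economical of the two.

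There is one slip, in your closing paragraph. If $Z_1=\emptyset$, then $a=c(Z_1)=1$. The family in (i) then has colors $1,3,1,4$ and is \emph{not} rainbow. In that case the contradiction comes from (ii), whose colors are $2,3,5,1$. This does not damage the proof, because your elimination in Step~2 is uniform in $a$.

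No case-by-case treatment of degenerate coincidences is needed either. Use the explicit swaps $V=(U\setminus\{x\})\cup\{y\}$ and $W=(U\setminus\{u\})\cup\{z\}$, with $x\in X_0\setminus Z_1$, $y\in Y_0\setminus U$, $u\in U\setminus X_0$, $z\in Z_5\setminus Y_0$. Then every witness ($V$, $W$, $V'$) contains a new element and misses an old one. So each is automatically distinct from $\emptyset$ and $[n]$, and all the required strict inclusions and incomparabilities hold.
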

\begin{proof}
Suppose, for contradiction, that there exists a rainbow $5$-chain
$(Y_0, Y_1, Y_2, Y_3, Y_4)$ in $\mathcal{B}_n$.
If $c([n])\notin \{c(Y_0), c(Y_1), c(Y_2), c(Y_3), c(Y_4)\}$, replace $Y_4$ by $[n]$; otherwise pick $j$ with $c(Y_j)=c([n])$ and delete $Y_j$, then append $[n]$ at the top.
In either case we obtain a rainbow $5$-chain $(W_0, W_1, W_2, W_3, [n])$
in $\mathcal{B}_n$.

Therefore, the colors of sets in $\mathcal{B}_{[W_0,[n]]}$ are identical to those in Type~1, after replacing $\emptyset$ by $W_0$, $X_0$ by $W_1$, and $Y_0$ by $W_3$, that is, 
all sets in the following families are monochromatically colored so that no two sets from different families share the same color:
$$
\mathcal{B}_{[W_0, W_3]} \setminus \mathcal{B}_{[W_1, W_3]}, \quad \{W_1\}, \quad \mathcal{B}_{(W_1, W_3)}, \quad \{W_3\}, \quad \mathcal{B}_{[W_1, [n]]} \setminus \mathcal{B}_{[W_1, W_3]}.$$
Moreover, each set in $\mathcal{B}_{[W_0, [n]]} \setminus \left(\mathcal{B}_{[W_0, W_3]}\cup \mathcal{B}_{[W_1, [n]]}\right)$ has the same color as the sets in the first or the last
family.

Let $a\in W_3\setminus W_1$ and $b\in W_1$, and set $W'_1=(W_1\cup\{a\})\setminus\{b\}$. Then, $W_1'\in \mathcal{B}_{[W_0, W_3]} \setminus \mathcal{B}_{[W_1, W_3]}$ and $W'_1\nsim W_1$. Thus, $c(W'_1)=c(W_0)$. Since $c(\emptyset)=c([n])$, the family $\{\emptyset, W_1', W_1, W_3\}$ forms a rainbow induced copy of $\mathcal{B}_2$, a contradiction. Hence, the maximum length of any rainbow chain in $\mathcal{B}_n$ is at most $4$.
\end{proof}

\begin{proof}[Proof of Lemma \ref{emptyset=n}]
We first prove the ``if'' part. Suppose that a coloring $c$ is of Type~5.  
Assume, for contradiction, that there exists a rainbow induced copy of $\mathcal{B}_2$ formed by sets $W_1, W_2, W_3, W_4 \in \mathcal{B}_n$, where $W_1 < W_2, W_3 < W_4$, and $W_2 \nsim W_3$.

Without loss of generality, suppose that $c(\emptyset) = 1$. Then $c([n]) = 1$. 
If $1 \notin \{c(W_1), c(W_4)\}$, then since $c(W_2)\neq c(W_3)$, either $(W_1, W_2, W_4, [n])$ or $(W_1, W_3, W_4, [n])$ forms a rainbow $4$-chain.  
Since $c$ is of Type~5, all sets in $\mathcal{B}_{(W_1, W_4)}$ are monochromatically colored, which implies $c(W_2) = c(W_3)$, a contradiction.
Therefore, we must have $1 \in \{c(W_1), c(W_4)\}$.  
We now consider the following two cases.

\begin{ccase}\label{subcase2-1}
$c(W_1)=1$.
\end{ccase}
Since $c([n])=1$, $(W_2, W_4, [n])$ forms a rainbow $3$-chain.
If $(W_2, W_4, [n])$ cannot be extended to a rainbow $4$-chain by adding another set, then by Type~5 (2), the union $\mathcal{B}_{[\emptyset, W_4]} \cup \mathcal{B}_{[W_2, [n]]}$
must contain exactly three colors: $1$, $c(W_2)$, and $c(W_4)$. This implies
$c(W_3) \in \{1, c(W_2), c(W_4)\},$
a contradiction.

Thus, there exists a set $X_0$ such that $\{X_0, W_2, W_4, [n]\}$ forms a rainbow $4$-chain, which must be one of 
$(X_0, W_2, W_4, [n]),  (W_2, X_0, W_4, [n])$, or $(W_2, W_4, X_0, [n]).$
Without loss of generality, suppose $c(W_2) = 2$, $c(W_4) = 4$, and $c(X_0) = 3$. Since $W_1, W_2, W_3 \in \mathcal{B}_{[\emptyset, W_4)}$, it suffices to analyze the colors of sets in $\mathcal{B}_{[\emptyset, W_4]}$.

If the rainbow chain is $(X_0, W_2, W_4, [n])$, then, by Type~5 (1) with
$W = X_0$, $X = W_2$, and $Y = W_4$, we see that among the sets in
$\mathcal{B}_{[\emptyset, W_4]}$, only the sets in $\mathcal{B}_{(X_0, W_4)}$
are colored $2$, and only the set $X_0$ is colored $3$.
Since $c(W_1)=1$ and $\{W_1, W_2, W_3, W_4\}$ forms a rainbow induced copy of $\mathcal{B}_2$, we have $c(W_3)=3$, and hence $W_3=X_0<W_2$, a contradiction.

If the rainbow chain is $(W_2, X_0, W_4, [n])$, then by Type~5 (1) with $W=W_2$, $X=X_0$, and $Y=W_4$, we see that among the sets in
$\mathcal{B}_{[\emptyset, W_4]}$, only the sets in $\mathcal{B}_{(W_2, W_4)}$ are colored $3$. Since $c(W_1)=1$ and $\{W_1, W_2, W_3, W_4\}$ forms a rainbow induced copy of $\mathcal{B}_2$, we have $c(W_3)=3$, and hence $W_3\in \mathcal{B}_{(W_2, W_4)}$, a contradiction.

If the rainbow chain is $(W_2, W_4, X_0, [n])$, then by Type~5 (1) with $W=W_2$, $X=W_4$, and $Y=X_0$, all sets in
$
\left(\mathcal{B}^{X_0\uparrow}_{(W_2, [n]]}\cup \mathcal{B}^{W_2\downarrow}_{[\emptyset, X_0)}\right)\setminus \mathcal{B}_{[W_2, X_0]}
$ 
are colored $1$. Since $W_4\in \mathcal{B}_{(W_2, X_0)}$, it follows that all sets in $\mathcal{B}_{[\emptyset, W_4]}\setminus \mathcal{B}_{[W_2, W_4]}$ are colored $1$.  Note that $W_3\in \mathcal{B}_{[\emptyset, W_4]}\setminus \mathcal{B}_{[W_2, W_4]}$. Thus, $c(W_3)=1$, a contradiction.

\begin{ccase}
$c(W_4)=1$.
\end{ccase}
If $(W_1, W_2, [n])$ cannot be extended to a rainbow $4$-chain by adding another set, then by Type~5 (2), the union $\mathcal{B}_{[\emptyset, W_2]} \cup \mathcal{B}_{[W_1, [n]]}$ contains exactly three colors: $c(W_1)$, $c(W_2)$, and $c([n])$. This implies $c(W_3) \in \{c(W_1), c(W_2), c([n])\}$, a contradiction. Therefore, there exists a set $X_0$ such that $\{W_1, W_2, [n], X_0\}$ forms a rainbow $4$-chain, which must be one of $(X_0, W_1, W_2, [n])$, $(W_1, X_0, W_2, [n])$, or $(W_1, W_2, X_0, [n])$. Since $W_2, W_3, W_4 \in \mathcal{B}_{(W_1,[n]]}$, it suffices to analyze the coloring within $\mathcal{B}_{[W_1,[n]]}$. This situation is completely analogous to Case~\ref{subcase2-1}: if we replace $\emptyset$ by $W_1$ and $W_4$ by $[n]$, then the interval $\mathcal{B}_{[W_1,[n]]}$ plays exactly the same role as $\mathcal{B}_{[\emptyset,W_4]}$ in Case~\ref{subcase2-1}. Hence the above argument can be applied in exactly the same way within $\mathcal{B}_{[W_1,[n]]}$ and again leads to a contradiction.

This completes the proof of  the ``if'' part.
\medskip

We next prove the ``only if'' part. Recall that $c(\emptyset)=c([n])$ and $k\geq 4$.
By Lemma~\ref{clm2}, any rainbow chain in $\mathcal{B}_n$ has length at most $4$. 

Assume that there exists a rainbow $4$-chain $(Y_0, Y_1, Y_2, Y_3)$ in $\mathcal{B}_n$.
If $c([n])\notin\{c(Y_0), c(Y_1), c(Y_2), c(Y_3)\}$, then
$(Y_0, Y_1, Y_2, Y_3, [n])$
is a rainbow $5$-chain, contradicting Lemma~\ref{clm2}.
Hence $c([n])=c(Y_j)$ for some $j\in\{0, 1, 2, 3\}$, and replacing $Y_j$ by $[n]$ yields a rainbow $4$-chain
with maximum set $[n]$.

Fix a rainbow $4$-chain $(W, X, Y, [n])$.
Then $W<X<Y<[n]$ lies entirely in the interval $\mathcal{B}_{[W, [n]]}$, which therefore already exhibits four distinct colors (witnessed by $W, X, Y, [n]$).
Since $c(W)\neq c([n])$, it follows from Lemma \ref{k4} that the coloring of $c$ in $\mathcal{B}_{[W, [n]]}$ must be one of Types~2, 3-1, 3-2, 4-1, 4-2.
(An entirely symmetric statement holds in $\mathcal{B}_{[\emptyset, Y]}$ if one anchors the chain at $\emptyset$ instead of $[n]$.)

First, we consider the colors of the sets in $\mathcal{B}_{[W, [n]]}$.
Assume that Type 2 holds in $\mathcal{B}_{[W, [n]]}$. 
\begin{itemize}
        \item Type 2: There exist two sets $X_0, Y_0 \in \mathcal{B}_{[W, [n]]}$ with $W < X_0 < Y_0 < [n]$ and $|Y_0| \geq |X_0| + 2$
such that all sets in the following families are monochromatically colored
so that no two sets from different families share the same color:
$$ \mathcal{B}_{[W, Y_0]} \setminus \mathcal{B}_{[X_0, Y_0]}, \quad
\{X_0, Y_0\}, \quad
 \mathcal{B}_{(X_0, Y_0)}, \quad \mathcal{B}_{[X_0, [n]]} \setminus \mathcal{B}_{[X_0, Y_0]}.$$
Moreover, 
each set in 
$\mathcal{B}_{[W, [n]]} \setminus \left(\mathcal{B}_{[W, Y_0]} \cup \mathcal{B}_{[X_0, [n]]}\right)$
has the same color as
the sets in the first or the last family.
\end{itemize}
Note that each set in $\mathcal{B}_{[W, [n]]} \setminus \mathcal{B}_{[X_0, Y_0]}$ is colored either $c(W)$ or $c([n])$.
Since $(W, X, Y, [n])$ is a rainbow $4$-chain in 
$\mathcal{B}_{[W, [n]]}$, it follows from Type 2 that $X, Y\in\mathcal{B}_{[X_0, Y_0]}$. Therefore, 
either $X=X_0$ with $Y\in \mathcal{B}_{(X_0, Y_0)}$, or $Y=Y_0$ with $X\in \mathcal{B}_{(X_0, Y_0)}$.

Suppose that $X=X_0$ and $Y\in \mathcal{B}_{(X_0, Y_0)}$.
Then all sets in the families $\mathcal{B}_{[W, Y]} \setminus \mathcal{B}_{[X, Y]}$, $\{X\}$, $\mathcal{B}_{(X, Y)}$ and $\{[n]\}$ are monochromatically colored so that no two sets from different families share the same color.
We can find two elements $x \in X \setminus W$ and $y \in Y \setminus X$. Observe that $(X\cup\{y\}) \setminus \{x\} \in \mathcal{B}_{[W, Y]} \setminus \mathcal{B}_{[X, Y]}\subseteq \mathcal{B}_{[W, Y_0]} \setminus \mathcal{B}_{[X, Y_0]}$. Since $c(\emptyset) = c([n])$, the sets $(X\cup\{y\}) \setminus \{x\}$, $X$, $Y$ and $\emptyset$ all have distinct colors. 
Hence, the family $\{\emptyset, (X\cup\{y\}) \setminus \{x\}, X, Y\}$ forms a rainbow induced copy of $\mathcal{B}_2$, a contradiction.

Suppose that $Y=Y_0$ and $X\in \mathcal{B}_{(X_0, Y_0)}$.
Then all sets in the families $\mathcal{B}_{[W, Y]} \setminus \mathcal{B}_{[X_0, Y]}$, $\mathcal{B}_{(X_0, Y)}$, $\{Y\}$ and $\{[n]\}$ are monochromatically colored so that no two sets from different families share the same color.
We can find two elements $x \in X_0 \setminus W$ and $y \in Y \setminus X_0$. Observe that $Y \setminus \{x\} \in \mathcal{B}_{[W, Y]} \setminus \mathcal{B}_{[X_0, Y]}$ and $Y \setminus \{y\} \in \mathcal{B}_{(X_0, Y)}$. Since $c(\emptyset) = c([n])$, the sets $Y \setminus \{x\}$, $Y \setminus \{y\}$, $Y$, and $\emptyset$ all have distinct colors. Hence, the family $\{\emptyset, Y \setminus \{x\}, Y \setminus \{y\}, Y\}$ forms a rainbow induced copy of $\mathcal{B}_2$, a contradiction.
Therefore, Type 2 cannot hold in $\mathcal{B}_{[W, [n]]}$.

Assume that Type 3-1 holds in $\mathcal{B}_{[W, [n]]}$. 
\begin{itemize}
   \item Type $3$-$1$:
There exist two sets $X_0, Y_0 \in \mathcal{B}_{[W, [n]]}$ with $W < X_0 < Y_0 < [n]$ and $|Y_0| \geq |X_0| + 2$
such that
all sets in the following families are monochromatically colored
so that no two sets from different families share the same color:
$$\mathcal{B}_{[W, Y_0]} \setminus \mathcal{B}_{[X_0, Y_0]}, \quad \{X_0\}\cup \mathcal{B}^{Y_0\uparrow}_{(X_0, [n]]}\setminus \mathcal{B}_{(X_0, Y_0]}, \quad \mathcal{B}_{(X_0, Y_0)}, \quad  \{Y_0\}$$
Moreover, 
each set in $\mathcal{B}_{[W, [n]]} \setminus \left(\mathcal{B}_{[W, Y_0]} \cup \mathcal{B}^{Y_0\uparrow}_{(X_0, [n]]}\right)$
has the same color as
the sets in the first or the second family.
\end{itemize}

Note that each set in $\mathcal{B}_{[W, [n]]} \setminus \mathcal{B}_{(X_0, Y_0]}$ is colored either $c(W)$ or $c([n])$.
Since $(W, X, Y, [n])$ is a rainbow $4$-chain in 
$\mathcal{B}_{[W, [n]]}$, it follows from Type 3-1 that $X, Y\in\mathcal{B}_{(X_0, Y_0]}$. Therefore, 
 $Y=Y_0$ and $X\in \mathcal{B}_{(X_0, Y_0)}$.
The Type~3-1 case is handled identically to Type~2 with $Y=Y_0$ and $X\in\mathcal{B}_{(X_0, Y_0)}$, choosing $x\in X_0\setminus W$ and $y\in Y\setminus X_0$ again makes $\{\emptyset, Y\setminus\{x\}, Y\setminus\{y\}, Y\}$ a rainbow induced copy of $\mathcal{B}_2$, a contradiction. Therefore, Type~3-1 cannot occur in $\mathcal{B}_{[W, [n]]}$.

Assume that Type 3-2 holds in $\mathcal{B}_{[W, [n]]}$. Let $X, Y$ be two sets with $\emptyset<X<Y<[n]$.
Recall that $\mathcal{B}^{X\downarrow}_{[\emptyset, Y)}
= \bigcup_{Z \in \mathcal{B}_{(X, Y)}} \mathcal{B}_{[\emptyset, Z]}$.
Note that within $\mathcal{B}_{[W,[n]]}$, the family
$\mathcal{B}^{X_0\downarrow}_{[W, Y_0)}$ is defined in the completely analogous way: 
we replace $\emptyset$ by $W$, and the pair $(X,Y)$ by $(X_0,Y_0)$. 
Thus $\mathcal{B}^{X_0\downarrow}_{[W, Y_0)} 
= \bigcup_{Y_0' \in \mathcal{B}_{(X_0, Y_0)}} \mathcal{B}_{[W, Y_0']}.$

\begin{itemize}
    \item Type $3$-$2$:
There exist two sets $X_0, Y_0 \in \mathcal{B}_{[W, [n]]}$ with $W< X_0 < Y_0 < [n]$ and $|Y_0| \geq |X_0| + 2$
such that all sets in the following families are monochromatically colored
so that no two sets from different families share the same color:
$$
\mathcal{B}_{[X_0, [n]]} \setminus \mathcal{B}_{[X_0, Y_0]}, \quad\{Y_0\}\cup\mathcal{B}^{X_0\downarrow}_{[W, Y_0)}\setminus \mathcal{B}_{[X_0, Y_0)}, \quad  \mathcal{B}_{(X_0, Y_0)}, \quad \{X_0\}.
$$
Moreover, 
each set in $\mathcal{B}_{[W, [n]]} \setminus \left(\mathcal{B}_{[X_0, [n]]} \cup \mathcal{B}^{X_0\downarrow}_{[W, Y_0)}\right)$
has the same color as
the sets in the first or the second family. 
\end{itemize}
Note that each set in $\mathcal{B}_{[W, [n]]} \setminus \mathcal{B}_{[X_0, Y_0)}$ is colored either $c(W)$ or $c([n])$.
Since $(W, X, Y, [n])$ is a rainbow $4$-chain in 
$\mathcal{B}_{[W, [n]]}$, it follows from Type 3-2 that $X, Y\in \mathcal{B}_{[X_0, Y_0)}$, and hence
$X=X_0$ and $Y\in \mathcal{B}_{(X_0, Y_0)}$.
The Type~3-2 case is handled identically to Type~2 with $X=X_0$ and $Y\in \mathcal{B}_{(X_0, Y_0)}$, choosing $x\in X\setminus W$ and $y\in Y\setminus X$ again makes $\{\emptyset, (X\cup\{y\})\setminus\{x\}, X, Y\}$ a rainbow induced copy of $\mathcal{B}_2$, a contradiction. Therefore, Type~3-2 cannot occur in $\mathcal{B}_{[W, [n]]}$.

Assume that Type 4-1 holds in $\mathcal{B}_{[W, [n]]}$. 
\begin{itemize}
    \item Type $4$-$1$:
There exists a set $X_0$ with $W < X_0 < [n]$
and a family $\mathcal{Y}_0 \subseteq \mathcal{B}_{(X_0, [n])}$
such that 
all sets in the following families are monochromatically colored
so that no two sets from different families share the same color:
$$\bigcup_{Y' \in \mathcal{Y}_0} \left(\mathcal{B}_{[W, Y')} \setminus \mathcal{B}_{[X_0, Y')}\right), \quad \{X_0\}, \quad \mathcal{B}_{(X_0, [n]]} \setminus \mathcal{Y}_0, \quad \mathcal{Y}_0.$$
Moreover, each set in
$\mathcal{B}_{[W, [n]]} \setminus \left(\mathcal{B}_{[X_0, [n]]} \cup \bigcup_{Y'\in \mathcal{Y}_0}\mathcal{B}_{[W, Y')}\right)$
has the same color as
the sets in the first or the third family.
\end{itemize}
Note that each set in $\mathcal{B}_{[W, [n]]} \setminus (\{X_0\}\cup \mathcal{Y}_0)$ is colored either $c(W)$ or $c([n])$.
Since $(W, X, Y, [n])$ is a rainbow $4$-chain in 
$\mathcal{B}_{[W, [n]]}$, it follows from Type 4-1 that $X=X_0$ and $Y\in \mathcal{Y}_0$.
Since $Y\in\mathcal{Y}_0\subseteq \mathcal{B}_{(X, [n])}$, we can find two elements $x$ and $y$ such that $x\in X\setminus W$ and $y\in Y\setminus X$. Thus, $(X\cup\{y\})\setminus\{x\}\nsim X$ and $(X\cup\{y\})\setminus\{x\}<Y$, and hence $(X\cup\{y\})\setminus\{x\}\in \mathcal{B}_{[W, Y)} \setminus \mathcal{B}_{[X, Y)}\subseteq \bigcup_{Y' \in \mathcal{Y}_0} \left(\mathcal{B}_{[W, Y')} \setminus \mathcal{B}_{[X, Y')}\right)$.
Therefore, $c((X\cup\{y\})\setminus\{x\})=c(W)\neq c(Y)$. Note that $c(\emptyset)=c([n])$. Thus, $\emptyset, (X\cup\{y\})\setminus\{x\}, X, Y$ are with distinct
colors, and hence $\{\emptyset, (X\cup\{y\})\setminus\{x\}, X, Y\}$ forms
a rainbow induced copy of $\mathcal{B}_2$, a contradiction. Therefore, Type $4$-$1$ cannot hold.

Assume that Type 4-2 holds in $\mathcal{B}_{[W, [n]]}$. 
\begin{itemize}
    \item Type $4$-$2$:
There exists a set $Y_0$ with $W < Y_0 < [n]$
and a family $\mathcal{X}_0 \subseteq \mathcal{B}_{(W, Y_0)}$
such that 
all sets in the following families are monochromatically colored
so that no two sets from different families share the same color:
$$
\bigcup_{X' \in \mathcal{X}_0}\left(\mathcal{B}_{(X', [n]]} \setminus \mathcal{B}_{(X', Y_0]}\right), \quad 
\{Y_0\}, \quad \mathcal{B}_{[W, Y_0)} \setminus \mathcal{X}_0, \quad 
\mathcal{X}_0.
$$
Moreover, each set in
$\mathcal{B}_{[W, [n]]} \setminus \left( \mathcal{B}_{[W, Y_0]}\cup \bigcup_{X'\in \mathcal{X}_0} \mathcal{B}_{(X', [n]]}\right)$
has the same color as
the sets in the first or the third family.
\end{itemize}
Note that each set in $\mathcal{B}_{[W, [n]]} \setminus (\{Y_0\}\cup \mathcal{X}_0)$ is colored either $c(W)$ or $c([n])$.
Since $(W, X, Y, [n])$ is a rainbow $4$-chain in 
$\mathcal{B}_{[W, [n]]}$, it follows from Type 4-2 that $Y=Y_0$ and $X\in \mathcal{X}_0$.
Without loss of generality, let $c(W)=1$, $c(Z)=2$ for each $Z\in \mathcal{X}_0$, $c(Y)=3$, and $c([n])=4$. Since $c(\emptyset)=c([n])$, we have $c(\emptyset)=4$.

Note that $\mathcal{X}_0\subseteq\mathcal{B}_{(W, Y)}$.
We now claim that $\mathcal{X}_0=\mathcal{B}_{(W, Y)}$. 
Suppose to the contrary that $\mathcal{X}_0\neq\mathcal{B}_{(W, Y)}$.
Let $Z_1\in\mathcal{X}_0$.
If there exists $Z_2\in\mathcal{B}_{(W, Y)}\setminus\mathcal{X}_0$ such that
$Z_1\nsim Z_2$, then since $c(Z_1)=2$ and
$c(Z_2)=c(W)=1$, $\{\emptyset, Z_1, Z_2, Y\}$ forms a rainbow induced copy of $\mathcal{B}_2$, a contradiction.

Therefore, every $Z\in\mathcal{B}_{(W, Y)}\setminus\mathcal{X}_0$ satisfies $Z\sim Z_1$.
Now let $Z\in\mathcal{B}_{(W, Y)}\setminus\mathcal{X}_0$. Since $W< Z\sim Z_1< Y$, both sets
$(Z\cap Z_1)\setminus W$ and $Y\setminus (Z\cup Z_1)$ are nonempty. Choose $x\in Y\setminus (Z\cup Z_1)$ and $y\in (Z\cap Z_1)\setminus W$, and take
$Z'=(Z_1\cup \{x\})\setminus \{y\}$. Then $Z\not\leq Z'$ and $Z_1\not\leq Z'$ (since $y\notin Z'$), and
$Z'\not\leq Z$ and $Z'\not\leq Z_1$ (since $x\in Z'\setminus (Z\cup Z_1)$). Thus, $Z\nsim Z'$ and $Z_1\nsim Z'$. Since every $Z\in\mathcal{B}_{(W, Y)}\setminus\mathcal{X}_0$ satisfies $Z\sim Z_1$, we have $Z'\in \mathcal{X}_0$, and hence $c(Z')=2$.
As $c(Z)=1$,
$\{\emptyset, Z, Z', Y\}$ again forms a rainbow induced copy of $\mathcal{B}_2$, a contradiction.
Therefore, $\mathcal{X}_0=\mathcal{B}_{(W, Y)}$, and so
\begin{align*}
\bigcup_{X' \in \mathcal{B}_{(W, Y)}} \left(\mathcal{B}_{(X', [n]]} \setminus \mathcal{B}_{(X', Y]}\right)&=\left(\bigcup_{X' \in \mathcal{B}_{(W, Y)}} \mathcal{B}_{(X', [n]]} \right)\setminus \left(\bigcup_{X' \in \mathcal{B}_{(W, Y)}} \mathcal{B}_{(X', Y]}\right)\\[2mm]
&=\left(\bigcup_{X' \in \mathcal{B}_{(W, Y)}} \mathcal{B}_{[X', [n]]} \right)\setminus  \mathcal{B}_{(W, Y]}=\mathcal{B}^{Y\uparrow}_{(W, [n]]}\setminus \mathcal{B}_{(W, Y]},
\end{align*}
and
\begin{align*}
&\mathcal{B}_{[W, [n]]}\setminus\left(\mathcal{B}_{[W, Y]}\cup\bigcup_{X'\in\mathcal{B}_{(W, Y)}}\mathcal{B}_{(X', [n]]}\right)=\mathcal{B}_{(W, [n]]}\setminus\left(\mathcal{B}_{(W, Y]}\cup\bigcup_{X'\in\mathcal{B}_{(W, Y)}}\mathcal{B}_{(X', [n]]}\right)=\mathcal{B}_{(W, [n]]}\setminus\mathcal{B}^{Y\uparrow}_{(W, [n]]}.
\end{align*}
Consequently, all sets in the following families are monochromatically colored so that no two sets from different families
share the same color:
$$
\mathcal{B}^{Y\uparrow}_{(W, [n]]}\setminus \mathcal{B}_{(W, Y]}, \quad \{Y\}, \quad \{W\}, \quad \mathcal{B}_{(W, Y)}.
$$
Moreover, each set in
$\mathcal{B}_{(W, [n]]}\setminus\mathcal{B}^{Y\uparrow}_{(W, [n]]}$
has the same color as the sets in the first or the third family.

Then consider the colors of the sets in $\mathcal{B}_{[\emptyset, Y]}$. 
Recall that the coloring of $c$ in $\mathcal{B}_{[\emptyset, Y]}$ must be one of Types~2, 3-1, 3-2, 4-1, or 4-2, since $(\emptyset, W, X, Y)$ is a rainbow $4$-chain.
From Observation \ref{obs1}, in Types~$2$, $3$-$1$, $3$-$2$, and $4$-$2$, there exists no set $Z$ with $\mathcal{B}_{(Z, Y)}\neq \emptyset$ such that 
all sets in the families $\{Z\}$, 
$\mathcal{B}_{(Z, Y)}$, $\{Y\}$ are monochromatically colored so that no two sets from different families
share the same color. 
By the previous paragraph, we know that all sets in the families $\{W\}$, 
$\mathcal{B}_{(W, Y)}$, $\{Y\}$ are monochromatically colored so that no two sets from different families
share the same color, and hence only Type 4-1 holds with $X_0=W$ and $\mathcal{Y}_0=\mathcal{B}_{(W, Y)}$.
By a computation similar to the previous paragraph,
all sets in the following families are monochromatically colored
so that no two sets from different families share the same color:
$$\mathcal{B}^{W\downarrow}_{[\emptyset, Y)}\setminus \mathcal{B}_{[W, Y)}, \quad \{W\}, \quad \{Y\}, \quad \mathcal{B}_{(W, Y)}.$$
Moreover, each set in
$\mathcal{B}_{[\emptyset, Y)}\setminus\mathcal{B}^{W\downarrow}_{[\emptyset, Y)}$
has the same color as
the sets in the first or the third family.

Since $c(\emptyset)=c([n])$, for any rainbow $4$-chain $(W, X, Y, [n])$, all sets in
the following families are monochromatically colored
so that no two sets from different families share the same color:
$$
\left(\mathcal{B}^{Y\uparrow}_{(W, [n]]}\cup \mathcal{B}^{W\downarrow}_{[\emptyset, Y)}\right)\setminus \mathcal{B}_{[W, Y]}, \quad
\{W\}, \quad
\mathcal{B}_{(W, Y)}, \quad
\{Y\}.
$$
Moreover, each set in $\mathcal{B}_{[\emptyset, Y)}\setminus \mathcal{B}^{W\downarrow}_{[\emptyset, Y)}$
has the same color as the sets in the first or the last family, and each set in $\mathcal{B}_{(W, [n]]}\setminus \mathcal{B}^{Y\uparrow}_{(W, [n]]}$
has the same color as the sets in the first or the second family. Therefore, $(1)$ holds.

In the end, consider any rainbow $3$-chain $(W, X, [n])$ that cannot be extended to a rainbow $4$-chain by adding another set.
Suppose that there exists a set $Z_0\in \mathcal{B}_{(W, [n])}$ with
$c(Z_0)\in [k]\setminus\{c(W), c(X), c([n])\}$. Note that $W<Z_0$.
If $Z_0\nsim X$, then $\{W, Z_0, X, [n]\}$ forms a rainbow copy of $\mathcal{B}_2$, a contradiction.
If $Z_0\sim X$, then $\{W, Z_0, X, [n]\}$ forms a rainbow $4$-chain, which contradicts that $(W, X, [n])$ 
cannot be extended to a rainbow $4$-chain by adding another set. Therefore, $c(Z)\in \{c(W), c(X), c([n])\}$ for any 
$Z\in \mathcal{B}_{(W, [n])}$. Similarly, we have $c(Z)\in \{c(W), c(X), c([n])\}$ for each $Z\in \mathcal{B}_{(\emptyset, X)}$. Thus, 
the union $\mathcal{B}_{[\emptyset, X]} \cup \mathcal{B}_{[W, [n]]}$ contains $3$ colors, $(2)$ holds.
\end{proof}
\subsection{Characterization of exact $k$-colorings without a rainbow induced copy of $\mathcal{B}_2$}\label{subsection26}

We are now ready to state the main structural result of this subsection.
Consider an exact $k$-coloring $c$ of $\mathcal{B}_n$.
If $c(\emptyset)\neq c([n])$, then by Lemmas~\ref{k5} and \ref{k4}, $\mathcal{B}_n$ contains no rainbow induced copy of $\mathcal{B}_2$ under $c$ if and only if $c$ is of Type~$1$, $2$, $3$-$1$, $3$-$2$, $4$-$1$, or $4$-$2$.
If $c(\emptyset)=c([n])$, then by Lemma~\ref{emptyset=n}, $\mathcal{B}_n$ contains no rainbow induced copy of $\mathcal{B}_2$ under $c$ if and only if $c$ is of Type~$5$.
Therefore, we obtain the following theorem.

\begin{theorem}\label{Structural-B2-1}
Let $k, n$ be two integers with $k\geq 4$ and $n>4$. In an exact $k$-coloring $c$ of $\mathcal{B}_n$, there is no rainbow induced copy of $\mathcal{B}_2$ if and only if one of the following holds:
\begin{itemize}
\item[$(1)$] $c(\emptyset)\neq c([n])$, and the coloring $c$ is of one of Types $1$, $2$, $3$-$1$, $3$-$2$, $4$-$1$, or $4$-$2$.
\item[$(2)$] $c(\emptyset)=c([n])$, and the coloring $c$ is of Type~$5$.
\end{itemize}
\end{theorem}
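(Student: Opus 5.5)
Theorem~\ref{Structural-B2-1} is obtained by assembling Lemmas~\ref{k5}, \ref{k4} and \ref{emptyset=n}. We split according to whether $c(\emptyset)=c([n])$. The hypotheses $k\ge 4$ and $n>4$ imply those of every lemma used: $n\ge 4$ for Lemmas~\ref{k5} and~\ref{k4}, and $n>2$ for Lemma~\ref{emptyset=n}. Hence each lemma applies verbatim, and the only work is to check that the case split is exhaustive and that each lemma gives an equivalence.

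\emph{Case $c(\emptyset)\neq c([n])$.} If $k\ge 5$, Lemma~\ref{k5} says there is no rainbow induced $\mathcal{B}_2$ if and only if $k=5$ and $c$ is of Type~1. If $k=4$, Lemma~\ref{k4} says this holds if and only if $c$ is of Type~2, 3-1, 3-2, 4-1 or 4-2. For the forward direction we invoke the lemma matching the value of $k$ and land in the list of (1). For the converse, suppose $c$ is of one of the listed types. By definition, a Type~1 coloring uses exactly five colors, and Types~2--4-2 use exactly four, since their defining families carry pairwise distinct colors and every remaining set repeats one of them. So $k$ is determined by the type, and the ``if'' part of the corresponding lemma gives the absence of a rainbow $\mathcal{B}_2$. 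This observation is the one point needing care: Types~2--4-2 cannot occur with $k\ge5$, and Type~1 cannot occur with $k=4$, so no mismatched lemma is ever needed.

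\emph{Case $c(\emptyset)=c([n])$.} Lemma~\ref{emptyset=n}, valid for all $k\ge4$ and $n>2$, states exactly that there is no rainbow induced $\mathcal{B}_2$ if and only if $c$ is of Type~5, which is statement (2).

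Since the two cases are complementary, and within each case the conditions in (1) and (2) are precisely the lemmas' characterizations, the ``if and only if'' follows. I expect no real obstacle: all the combinatorial difficulty has already been absorbed into the lemmas. The main thing to verify is the color count of each type in the converse direction of (1), so that the correct lemma is applied.
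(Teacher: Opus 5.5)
Your proposal is correct and follows the paper's own argument. The paper also splits on whether $c(\emptyset)=c([n])$, reads off Lemmas~\ref{k5} and~\ref{k4} in the first case, and reads off Lemma~\ref{emptyset=n} in the second. Your extra check that a Type~1 coloring forces $k=5$ and Types~2--4-2 force $k=4$ is left implicit in the paper, and it is exactly what justifies applying the matching lemma's ``if'' direction.
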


To obtain an upper bound on the number of colors in an exact coloring of $\mathcal{B}_n$ with no rainbow induced copy of $\mathcal{B}_2$, we introduce some auxiliary notation and use a result of Griggs, Stahl, and Trotter.
For positive integers $u$ and $v$, let $\mathcal{C}_{u, v}$ be a union of $u$ disjoint $v$-chains, where sets from different chains are pairwise incomparable. More precisely, let the family of sets be denoted by
$\{X_{i, j}  :  1 \leq i \leq u,\; 1 \leq j \leq v\}$,
satisfying
$X_{i, 1} < X_{i, 2} < \cdots < X_{i, v}$ for each $i$,
and
$X_{i, j} \nsim X_{i', j'}$ for any $i \ne i'$ and any $j, j'$.
For given $v$ and $n$, let $c_v(n)$ be the maximum integer $u$ such that there exists a copy of $\mathcal{C}_{u, v}$ in $\mathcal{B}_n$.
Griggs, Stahl, and Trotter~\cite{GST84} determined $c_v(n)$ exactly.

\begin{theorem}[Griggs--Stahl--Trotter {\cite{GST84}}]\label{cvn} 
Let $v,n$ be positive integers. Then
$c_v(n) = {n - v + 1 \choose \left\lfloor (n - v + 1)/2 \right\rfloor}$.
\end{theorem}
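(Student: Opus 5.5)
The plan is to prove the two inequalities separately. Write $m=n-v+1$. An explicit construction gives $c_v(n)\ge\binom{m}{\lfloor m/2\rfloor}$. For the reverse inequality, each $v$-chain is compressed to its bottom and top sets, and the Bollobás set-pair inequality is applied.

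\emph{Lower bound.} Split the ground set $[n]$ into $[m]$ and $E=\{m+1,\dots,n\}$, so that $|E|=v-1$. Let $A_1,\dots,A_u$ be the sets of the middle level $\binom{[m]}{\lfloor m/2\rfloor}$, so $u=\binom{m}{\lfloor m/2\rfloor}$. For each $i$, define the $v$-chain
\[
X_{i,j}=A_i\cup\{m+1,\dots,m+j-1\}, \qquad 1\le j\le v .
\]
Take $i\ne i'$ and any $j,j'$. Since $X_{i,j}\cap[m]=A_i$ and $X_{i',j'}\cap[m]=A_{i'}$, a containment between $X_{i,j}$ and $X_{i',j'}$ would force $A_i\subseteq A_{i'}$ or $A_{i'}\subseteq A_i$. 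Both are impossible because $A_i$ and $A_{i'}$ are distinct sets of the same size. Hence we have a copy of $\mathcal{C}_{u,v}$.

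\emph{Upper bound.} Let $\{X_{i,j}\}$ be a copy of $\mathcal{C}_{u,v}$ in $\mathcal{B}_n$. For each $i$, set $B_i=X_{i,1}$ and $T_i=X_{i,v}$. Since $X_{i,1}<\dots<X_{i,v}$ is a strict chain, $|T_i|-|B_i|\ge v-1$. Cross-incomparability gives $B_i\not\subseteq T_j$ for all $i\ne j$. Now define $A_i=B_i$ and $C_i=[n]\setminus T_i$. Then:
\begin{itemize}
\item $A_i\cap C_i=\emptyset$, because $B_i\subseteq T_i$;
\item $A_i\cap C_j\neq\emptyset$ for $i\ne j$, because $B_i\not\subseteq T_j$.
\end{itemize}
This is exactly the hypothesis of Bollobás's set-pair theorem, which yields
\[
\sum_{i=1}^u \binom{|A_i|+|C_i|}{|A_i|}^{-1}\le 1 .
\]
Moreover, $|A_i|+|C_i|=n-(|T_i|-|B_i|)\le m$. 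Each binomial coefficient is therefore at most $\binom{m}{\lfloor m/2\rfloor}$, so each term is at least $\binom{m}{\lfloor m/2\rfloor}^{-1}$. This gives $u\le\binom{m}{\lfloor m/2\rfloor}$.

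The main point needing care is justifying the Bollobás inequality, since it is not stated earlier in the paper. I would either cite it as classical or include the standard permutation proof. For the latter, call a permutation $\pi$ of $[n]$ \emph{good for $i$} if every element of $A_i$ precedes every element of $C_i$ in $\pi$. No permutation can be good for two indices $i\ne j$. To see this, take $x\in A_i\cap C_j$ and $y\in A_j\cap C_i$. Goodness for $i$ forces $x$ to precede $y$, while goodness for $j$ forces $y$ to precede $x$. Finally, the fraction of permutations that are good for $i$ is exactly $\binom{|A_i|+|C_i|}{|A_i|}^{-1}$, and summing these disjoint fractions gives the inequality.
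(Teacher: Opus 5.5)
Your proof is correct and complete. The paper does not prove this statement. It quotes it from Griggs, Stahl, and Trotter and uses it only as a black box in Proposition~\ref{empty-n-k}, so there is no in-paper argument to compare yours against; citing keeps the paper short, while your version makes it self-contained.

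What you give is a standard argument. For the lower bound, you attach the middle layer of $[m]$ to a fixed $(v-1)$-chain on the remaining $v-1$ points. For the upper bound, you compress each chain to the pair (bottom, complement of top) and apply the Bollob\'as set-pair inequality. Your permutation sketch of that inequality is also correct.

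Two small remarks on what your route gives beyond the statement:
\begin{enumerate}
\item Your upper bound uses only $X_{i,1}\not\subseteq X_{j,v}$ for $i\neq j$, not full cross-incomparability. So it actually proves the stronger LYM-type inequality $\sum_i \binom{n-(|X_{i,v}|-|X_{i,1}|)}{|X_{i,1}|}^{-1}\le 1$. This also applies to families of chains of different lengths.
\item The formula tacitly assumes $v\le n+1$; for larger $v$ there are no $v$-chains and $c_v(n)=0$. Your construction and your bound $|T_i|-|B_i|\ge v-1$ both work in exactly that range, so nothing is lost.
\end{enumerate}
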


As an application, we obtain the following upper bound.

\begin{proposition}\label{empty-n-k}
Let $k, n$ be two integers with $n>2$ and $k\geq 4$. Consider an exact $k$-coloring $c$ of $\mathcal{B}_n$ with no rainbow induced copy of $\mathcal{B}_2$. Then
    $k \leq \binom{n}{\lfloor n/2\rfloor} + \binom{n-1}{\lfloor (n-1)/2\rfloor} + \binom{n-2}{\lfloor (n-2)/2\rfloor} + 1, $
\end{proposition}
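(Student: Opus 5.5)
The plan is to split according to whether $c(\emptyset)=c([n])$, and in the harder case to feed three families of pairwise incomparable chains into Theorem~\ref{cvn}.

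\emph{Case $c(\emptyset)\neq c([n])$.} Lemmas~\ref{k5} and~\ref{k4} (and, for small $n$, the same direct argument) give $k\le 5$. For $n\ge 3$ the right-hand side is at least $\binom{3}{1}+\binom{2}{1}+\binom{1}{0}+1=7$, so this case is immediate.

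\emph{Case $c(\emptyset)=c([n])$.} Call this common color $1$. By Lemma~\ref{clm2}, every rainbow chain has length at most $4$. Note that $\{\emptyset,X,Y,[n]\}$ is never rainbow, so Lemma~\ref{lem-x-y} is not available. Instead I would work with rainbow chains ending at $[n]$, as in the proof of Lemma~\ref{emptyset=n}, where every rainbow $4$-chain can be normalised to the form $(W,X,Y,[n])$. For each color $i\ne 1$, let $h(i)\in\{2,3,4\}$ be the maximum length of a rainbow chain with top $[n]$ that contains a set of color $i$. Then fix one such maximal chain $\mathcal{K}_i$ and remove $[n]$ from it, leaving a chain of $v=h(i)-1\in\{1,2,3\}$ sets. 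This sorts the colors $2,\dots,k$ into three classes $\Gamma_1,\Gamma_2,\Gamma_3$ according to $v$.

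The goal is to show that, inside each class $\Gamma_v$, one can choose the chains (possibly after shrinking, or after picking one chain per color class as in Type~5) so that sets from chains belonging to different colors are pairwise incomparable. The chosen chains then form an induced copy of $\mathcal{C}_{|\Gamma_v|,v}$ in $\mathcal{B}_n$. Theorem~\ref{cvn} would then give $|\Gamma_v|\le c_v(n)=\binom{n-v+1}{\lfloor (n-v+1)/2\rfloor}$. Summing over $v=1,2,3$ and adding $1$ for color $1$ yields exactly the stated bound.

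The incomparability is where the work lies, and I expect it to be the main obstacle. Suppose two chains for colors $i\ne j$ in the same class contain comparable sets, or a set of color $i$ is comparable with the chain of color $j$. Then I would try to merge the two chains with $[n]$ into a rainbow chain. This either contradicts the maximality of $h(i)$ or $h(j)$, or, when the merged chain has length $5$, contradicts Lemma~\ref{clm2}. If instead a merge fails because colors repeat, I would use the Type~5 structure: by Lemma~\ref{emptyset=n}, the interval $\mathcal{B}_{(W,Y)}$ of a rainbow $4$-chain is monochromatic and the surrounding families carry only color $1$ or the colors of $W$ and $Y$. For rainbow $3$-chains, Type~5$(2)$ confines all colors near the chain to three.

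Concretely, I would first treat $\Gamma_3$, using the Type~5$(1)$ description to show that two distinct rainbow $4$-chains with disjoint middle colors have incomparable $W,X,Y$ parts. Then I would treat $\Gamma_2$ via Type~5$(2)$. Finally I would treat $\Gamma_1$, whose representatives are single sets and need only be pairwise incomparable. That last step follows because two comparable sets of distinct non-$1$ colors, together with $[n]$, would form a rainbow $3$-chain, contradicting $h=2$.
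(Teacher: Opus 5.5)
There is a genuine gap in the counting for the case $c(\emptyset)=c([n])$. Your case $c(\emptyset)\neq c([n])$ is fine: for $n=3$, Lemma~\ref{lem-x-y} with $k\ge 5$ would produce a $5$-chain in $\mathcal{B}_3$, so $k\le 4$.

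In the second case you give each color its own chain and want the chains of different colors in one class to be unrelated, so that $|\Gamma_v|\le c_v(n)$. But a rainbow chain with top $[n]$ and $v\ge 2$ further sets carries $v$ colors. Colors that share such a chain generally cannot be separated, and the inequality $|\Gamma_v|\le c_v(n)$ is false. Here is a counterexample in $\mathcal{B}_4$:
\begin{itemize}
\item color $\{1\}$ with $2$;
\item color both $\{1,2\}$ and $\{1,3\}$ with $3$;
\item color $\{1,2,3\}$ with $4$;
\item color every other set, including $\emptyset$ and $[4]$, with $1$.
\end{itemize}
A rainbow induced $\mathcal{B}_2$ would have to contain the chain $\{1\}<\{1,j\}<\{1,2,3\}$, so $\{1\}$ is its bottom and $\{1,2,3\}$ its top. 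It would also need a color-$1$ set strictly between them, and none exists. So this coloring is admissible, yet $\Gamma_3=\{2,3,4\}$ while $c_3(4)=\binom{2}{1}=2$. Re-choosing chains cannot help, since any chains for colors $2$ and $4$ contain the comparable sets $\{1\}$ and $\{1,2,3\}$. Even if you counted chains rather than colors within each class, you would only get $|\Gamma_v|\le v\,c_v(n)$, hence $k-1\le c_1(n)+2c_2(n)+3c_3(n)$, which is weaker than the statement.

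Two ideas are missing: count chains rather than colors, and make chains of \emph{different} lengths mutually unrelated, so that truncation yields nested inequalities. The paper does this as follows.
\begin{enumerate}
\item Fix one set of each color other than $c([n])$.
\item Greedily extract a maximal family of $a$ disjoint rainbow $3$-chains, then $b$ disjoint rainbow $2$-chains. The remaining $c$ sets form an antichain, and $k-1=3a+2b+c$.
\item Show that sets from different pieces are incomparable. Let $X$ lie in a piece $\mathcal{C}$ (the longer of the two pieces) and $Y\notin\mathcal{C}$ with $X<Y$.
  \begin{itemize}
  \item If $Y$ is comparable to all of $\mathcal{C}$, you get a longer rainbow chain. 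This contradicts Lemma~\ref{clm2} once $[n]$ is added, or the maximality of the greedy extraction.
  \item Otherwise some $X'\in\mathcal{C}$ has $X'\nsim Y$. This forces $X<X'$ and makes $\{X,X',Y,[n]\}$ a rainbow $\mathcal{B}_2$. Use $\emptyset$ dually when $Y<X$.
  \end{itemize}
  No Type~5 analysis is needed for this step.
\item Take one set from every piece, two sets from every $2$- and $3$-chain, and all three sets from every $3$-chain. This gives copies of $\mathcal{C}_{a+b+c,1}$, $\mathcal{C}_{a+b,2}$ and $\mathcal{C}_{a,3}$.
\item Theorem~\ref{cvn} then gives $a+b+c\le c_1(n)$, $a+b\le c_2(n)$ and $a\le c_3(n)$. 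Adding these yields $k-1=3a+2b+c\le c_1(n)+c_2(n)+c_3(n)$.
\end{enumerate}
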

\begin{proof}
When $c(\emptyset)\neq c([n])$, by Theorem \ref{Structural-B2-1} (1), the coloring $c$ is of one of Types $1$, $2$, $3$-$1$, $3$-$2$, $4$-$1$, or $4$-$2$. Thus, $k\leq 5$. Considering the case $c(\emptyset)= c([n])$, by Theorem \ref{Structural-B2-1} (2), the coloring $c$ is of Types $5$.
For the exact $k$-coloring of $\mathcal{B}_n$, we can find a rainbow family $\{X_i  :  1 \leq i \leq k - 1\}$ in $\mathcal{B}_{(\emptyset, [n])}$ such that $c(X_i)\neq c([n])$ and $c(X_i) \neq c(X_j)$ for $i, j$ with $1\leq i\neq j\leq k-1$. By Lemma~\ref{clm2}, every rainbow chain in $\mathcal{B}_n$ has length at most $4$.
Therefore, every rainbow chain in $\{X_i  :  1 \leq i \leq k - 1\}$ has length at most $3$.

We construct a decomposition of $\{X_i : 1 \leq i \leq k-1\}$ as follows.
First, extract a maximal collection of pairwise disjoint rainbow $3$-chains and denote them by $\mathcal{C}_3^1, \dots, \mathcal{C}_3^a$; remove all their sets. 
Next, from the remaining sets extract a maximal collection of pairwise disjoint rainbow $2$-chains, denoted $\mathcal{C}_2^1, \dots, \mathcal{C}_2^b$, and remove these as well. 
Finally, let $\mathcal{A}_c$ be the family of all sets that remain; then $\mathcal{A}_c$ is an antichain of size $c$. 
Define $\mathcal{C}(a, b, c)=\{\mathcal{C}_3^1, \dots, \mathcal{C}_3^a, \mathcal{C}_2^1, \dots, \mathcal{C}_2^b, \mathcal{A}_c\}$. 

Next, we show that any two sets belonging to different chains or antichains in $\mathcal{C}(a, b, c)$ are incomparable.
We first prove that $X \nsim Y$ for each $i$ with $1 \leq i \leq a$ and each $X \in \mathcal{C}_3^i$, $Y \in \mathcal{C}(a, b, c) \setminus \mathcal{C}_3^i$. Suppose, for contradiction, that for some $i$ there exist $X \in \mathcal{C}_3^i$ and $Y \in \mathcal{C}(a, b, c) \setminus \mathcal{C}_3^i$ such that $Y \sim X$. 
Assume that $Y>X$.
Suppose that there exists $X'\in \mathcal{C}_3^i$ with $X'\nsim Y$.
If $X'<X$, then $X'<X<Y$, contradicting $X'\nsim Y$. Thus, $X'>X$, and hence $\{X, X', Y, [n]\}$ forms a rainbow induced copy of $\mathcal{B}_2$, a contradiction.
{Therefore, $Y \sim X'$ for all $X' \in \mathcal{C}_3^i$, and hence $Y$ together with $\mathcal{C}_3^i$ forms a rainbow 4-chain, contradicting the fact that rainbow chains in $\mathcal{B}_{(\emptyset, [n])}$ without color $c([n])$ have size at most $3$.}
Assume that $Y<X$. {Similarly, we can also obtain a contradiction.} Therefore, such $Y$ cannot exist.

By the same argument in the previous paragraph, we can show that $X \nsim Y$ for each $i$ with $1 \leq i \leq b$, each $X \in \mathcal{C}_2^i$, and each $Y \in \mathcal{A}_c\cup \bigcup_{j=1}^b \mathcal{C}_2^j \setminus \mathcal{C}_2^i$. Therefore, any two sets belonging to different chains or antichains in $\mathcal{C}(a, b, c)$ are incomparable.

Thus, $\mathcal{C}(a, b, c)$ contains subposets isomorphic to $\mathcal{C}_{a+b+c, 1}$, $\mathcal{C}_{a+b, 2}$, and $\mathcal{C}_{a, 3}$. 
Recall that $\mathcal{C}_{u, v}$ is a union of $u$ disjoint $v $-chains, where sets from different chains are pairwise incomparable. 
By Theorem~\ref{cvn}, we have
$$
a + b + c \leq \binom{n}{\lfloor n/2 \rfloor}, \quad
a + b \leq \binom{n-1}{\lfloor (n-1)/2 \rfloor}, \quad
a \leq \binom{n-2}{\lfloor (n-2)/2 \rfloor}.
$$
Since $|\mathcal{C}(a, b, c)| = k - 1 = 3a + 2b + c$, it follows that
$k \leq \binom{n}{\lfloor n/2 \rfloor} + \binom{n-1}{\lfloor (n-1)/2 \rfloor} + \binom{n-2}{\lfloor (n-2)/2 \rfloor} + 1.$
\end{proof}

%%%%%%%%%%%%%%%%%%%%%%%%%%%%%%%%%%%%%%%%%%%%%%%%%%%%%%%%%%%%%%%%%%%%%%%%%%%
\section{Application to Gallai-Ramsey theory}\label{sec_3}
%%%%%%%%%%%%%%%%%%%%%%%%%%%%%%%%%%%%%%%%%%%%%%%%%%%%%%%%%%%%%%%%%%%%%%%%%%%
\newtheorem*{mainthm3}{\rm\bf Theorem~\ref{cBGR-c3}}
\begin{mainthm3}[Restated]
Let $k, s$ be integers with $k\geq 3$ and $s\geq 3$. Then 
\begin{itemize}
    \item[] $(1)$ $\mathrm{GR}_{k}(\mathcal{C}_3:\mathcal{C}_s)=s$, if $3\leq k\leq{s-1\choose \lceil (s-1)/2\rceil}+1$.
    
    \item[] $(2)$ The pair of posets $\mathcal{C}_3, \mathcal{C}_s$ is $(\mathcal{C}_3:\mathcal{C}_s)_k$-good if $k>{s-1\choose \lceil (s-1)/2\rceil}+1$.
\end{itemize}
\end{mainthm3}
\begin{proof}
To prove (1) and (2),  
we first show Claim \ref{claim-1}.
\begin{claim}\label{claim-1}
There exists either a rainbow induced copy of $\mathcal{C}_3$ or a monochromatic induced copy of $\mathcal{C}_N$  for any $N\geq 3$ and any coloring of the sets in $\mathcal{B}_N$ with at least $3$ colors.
\end{claim}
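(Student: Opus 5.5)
We use the structural description of Theorem~\ref{Structural-C3-1}. Fix $N\ge 3$ and a coloring $c$ of $\mathcal{B}_N$ with at least $3$ colors; say it uses exactly $k'\ge 3$ colors, so it is an exact $k'$-coloring. Suppose it has no rainbow induced copy of $\mathcal{C}_3$. Then Theorem~\ref{Structural-C3-1} applies with $n=N\ge 2$, and we obtain two facts. First, $c(\emptyset)=c([N])$; call this common color $1$. Second, any two sets of $\mathcal{B}_{(\emptyset,[N])}$ whose colors are distinct and both different from $1$ are incomparable.

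We then build a monochromatic chain of length $N+1$ in color $1$. Since induced copies of chains are just chains, it suffices to find $N$ comparable sets of color $1$.

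First we locate a set of non-$1$ color. Since $k'\ge 3$, there are sets $A,B\in\mathcal{B}_{(\emptyset,[N])}$ with $c(A)=2$ and $c(B)=3$, and $A\nsim B$.

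Next we take a maximal chain
\[
\emptyset=Z_0<Z_1<\cdots<Z_N=[N]
\]
with $|Z_i|=i$. Every $Z_i$ with $1\le i\le N-1$ whose color is not $1$ must be incomparable to every set whose color is a different non-$1$ color. We choose the chain to pass through $A$, that is, $Z_{|A|}=A$. Then each $Z_i$ is comparable to $A$. So the only non-$1$ color that can appear on the chain is $c(A)=2$: any other non-$1$ color would be on a set comparable to $A$, which is forbidden.

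The remaining task is to bound how many sets of color $2$ a suitably chosen maximal chain must contain. We need a monochromatic chain of $N$ sets, but a maximal chain has only $N+1$ sets. This is too tight if we count via a single chain. The plan instead is to count color-$1$ sets along maximal chains directly, using only two ingredients: the endpoints $\emptyset$ and $[N]$ have color $1$, and colors $2$ and $3$ occupy mutually incomparable sets.

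The key step is to show that some maximal chain contains at most one set of non-$1$ color. Given any maximal chain, suppose it meets two sets of non-$1$ colors. By the incomparability fact, these two sets share a color, say $2$. Then every set of color $3$ is incomparable to both of them. We then try to reroute the chain: we replace the segment between those two sets by a path that avoids color $2$. If we can do this, the chain keeps $N$ sets of color $1$ together with $\emptyset$ and $[N]$, which gives a monochromatic $\mathcal{C}_N$.

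The main obstacle is that a whole interval may be colored $2$. For example, $\mathcal{B}_{[A,A']}$ could be entirely color $2$ and yet still be incomparable to $B$. To handle this, we choose the chain to avoid the down-set and up-set of $B$ as much as possible. Concretely, we pick an element $x\in B\setminus A$ and an element $y\in A\setminus B$. Then we use a chain through sets that contain $y$ early and $x$ late. Sets of color $2$ or $3$ must be pairwise incomparable across the two colors, and we argue that a chain which is comparable to $B$ at most once carries at most one non-$1$ set. A pigeonhole argument over the $N$ inner levels then gives $N$ sets of color $1$, completing the chain of length $N$.
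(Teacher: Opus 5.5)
\textbf{There is a genuine gap.} Your reduction is fine. By Theorem~\ref{Structural-C3-1} both endpoints get color $1$, so any monochromatic $\mathcal{C}_N$ must have color $1$. Since a $\mathcal{C}_N$ in $\mathcal{B}_N$ misses exactly one level, finding one is equivalent to finding a maximal chain with at most one set of color other than $1$. (Your opening sentence asks for a monochromatic chain of length $N+1$; that is too strong and false in general. $N$ sets is the right target.) However, this ``key step'' is the entire content of the claim, and you do not prove it.

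The rerouting idea is never made precise. Rerouting the segment between two color-$2$ sets does not remove those two sets. Worse, the concrete recipe you give fails. Set $c(\{N\})=3$, $c(S)=2$ for every nonempty $S\subseteq[N-1]$, and color $1$ on every other set. This coloring satisfies the conditions of Theorem~\ref{Structural-C3-1}. With $A=\{1\}$, $B=\{N\}$, $x=N$, $y=1$, your chain containing $y$ early and $x$ late is $\emptyset<\{1\}<\{1,2\}<\cdots<[N-1]<[N]$. None of its inner sets is comparable to $B$, yet it carries $N-1$ sets of color $2$. So the assertion that a chain comparable to $B$ at most once carries at most one non-$1$ set is false. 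The good chain here is $\emptyset<\{1\}<\{1,N\}<\{1,2,N\}<\cdots<[N]$, which enters the up-set of $B$ as early as possible, the opposite of your heuristic.

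The missing idea is that comparability is what forces color $1$. A set comparable to a color-$2$ set cannot have color $3$, and vice versa, so you want the chain to pass through sets from which both non-$1$ colors are excluded. The paper arranges this by induction on $N$, splitting $\mathcal{B}_N$ into $\mathcal{B}_{[\emptyset,[N-1]]}$ and $\mathcal{B}_{[\{N\},[N]]}$.
\begin{itemize}
\item If one half uses at least three colors, the induction hypothesis gives a monochromatic $\mathcal{C}_{N-1}$ in it. That chain must contain an endpoint of the half, so it has color $1$ by Theorem~\ref{Structural-C3-1}, and it extends by $[N]$ or $\emptyset$.
\item If a half uses only one color, it already contains a color-$1$ chain of $N$ sets.
\item Otherwise the halves use $\{1,2\}$ and $\{1,3\}$. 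Take $X_0$ of minimum size with color $2$ in the lower half. Sets in $\mathcal{B}_{[\emptyset,X_0)}$ have color $1$ by minimality. Sets in $\mathcal{B}_{[X_0\cup\{N\},[N]]}$ lie in the upper half and are above $X_0$, so they cannot have color $3$ and hence have color $1$. A chain through these two intervals gives $|X_0|+(N-|X_0|)=N$ sets of color $1$.
\end{itemize}
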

\begin{proof}
We proceed by induction on $N$.
When $N=3$, it is easy to show that for any coloring of the sets in $\mathcal{B}_N$ with at least $3$ colors, there exists either a rainbow induced copy of $\mathcal{C}_3$ or a monochromatic induced copy of $\mathcal{C}_3$.
Thus, we assume that for $N\geq 4$ and any coloring of the sets in $\mathcal{B}_{N-1}$ with at least $3$ colors, there exists either a rainbow induced copy of $\mathcal{C}_3$ or a monochromatic induced copy of $\mathcal{C}_{N-1}$.
Consider a coloring $c$ of the sets in $\mathcal{B}_N$ with at least $3$ colors. Suppose, toward a contradiction, that there exists neither a rainbow induced copy of $\mathcal{C}_3$ nor a monochromatic induced copy of $\mathcal{C}_N$.

Without loss of generality, let $c(\emptyset)=1$. 
By Theorem \ref{Structural-C3-1} (1), 
we can obtain that $c(\emptyset)=c([N])=1$.
Then consider the restrictions of $c$ to $\mathcal{B}_{[\emptyset,[N-1]]}$ and to $\mathcal{B}_{[\{N\},[N]]}$. 
Suppose first that at least $3$ colors appear on the sets in $\mathcal{B}_{[\emptyset,[N-1]]}$; the case where at least $3$ colors appear on the sets in $\mathcal{B}_{[\{N\},[N]]}$ is analogous.
By the induction hypothesis applied to the restriction of $c$ to $\mathcal{B}_{[\emptyset,[N-1]]}$, there exists either a rainbow induced copy of $\mathcal{C}_3$ or a monochromatic induced copy of $\mathcal{C}_{N-1}$ in $\mathcal{B}_{[\emptyset,[N-1]]}$.

If there is a rainbow induced copy of $\mathcal{C}_3$ in $\mathcal{B}_{[\emptyset,[N-1]]}$, then it is also a rainbow induced copy of $\mathcal{C}_3$ in $\mathcal{B}_N$, a contradiction. Therefore, there is a monochromatic induced copy of $\mathcal{C}_{N-1}$ in $\mathcal{B}_{[\emptyset,[N-1]]}$.
Since $\mathcal{B}_{(\emptyset, [N-1])}$ contains no induced copy of $\mathcal{C}_{N-1}$, this chain must include $\emptyset$ or $[N-1]$. 
By Theorem \ref{Structural-C3-1} (1), 
$c([N-1])=c(\emptyset)=1$. 
Consequently, there exists a monochromatic induced copy of $\mathcal{C}_{N-1}$ of color $1$ in $\mathcal{B}_{[\emptyset, [N-1]]}$.
Since $c([N])=1$, it follows that there exists a monochromatic induced copy of $\mathcal{C}_{N}$ with color $1$ in $\mathcal{B}_{[\emptyset, [N]]}$, a contradiction.
Therefore, each of $\mathcal{B}_{[\emptyset,[N-1]]}$ and $\mathcal{B}_{[\{N\},[N]]}$ uses at most two colors.

If there is only one color in $\mathcal{B}_{[\emptyset, [N-1]]}$ or $\mathcal{B}_{[\{N\}, [N]]}$, then there exists a monochromatic induced copy of $\mathcal{C}_{N}$, a contradiction.
Hence, there exist $2$ colors in each of $\mathcal{B}_{[\emptyset, [N-1]]}$ and $\mathcal{B}_{[\{N\}, [N]]}$. Note that there exist at least $3$ colors in $\mathcal{B}_N$ and $c(\emptyset)=c([N])=1$.
We assume $c(X)\in\{1, 2\}$ for each $X\in \mathcal{B}_{[\emptyset, [N-1]]}$ and $c(Y)\in\{1, 3\}$ for each $Y\in \mathcal{B}_{[\{N\}, [N]]}$.

Let $\mathcal{X}$ be the family of sets in $\mathcal{B}_{[\emptyset, [N-1]]}$ with color $2$. Note that $\mathcal{X}\neq \emptyset$. Then we can find a set $X_0$ satisfying $|X_0|=\min\{|X| :  X\in\mathcal{X}\}$. Thus, $c(X)=1$ for each $X\in \mathcal{B}_{[\emptyset, X_0)}$.
By Theorem \ref{Structural-C3-1}, we see that $c(Y)=1$ for each $Y\in \mathcal{B}_{[X_0\cup\{N\}, [N])}$.
Note that $c([N])=1$. 
Thus, there exists a monochromatic induced copy of $\mathcal{C}_N$ in $\mathcal{B}_{[\emptyset, X_0)} \cup \mathcal{B}_{[X_0 \cup \{N\}, [N]]}$ with color $1$, a contradiction. 
Thus, the claim holds.
\end{proof}

When $3\leq k\leq{s-1\choose \lceil (s-1)/2\rceil}+1$,
by Claim \ref{claim-1}, there exists a rainbow induced copy of $\mathcal{C}_3$ or a monochromatic induced copy of $\mathcal{C}_{s}$ for any $N\geq s$ and any exact $k$-coloring of the sets in $\mathcal{B}_{N}$. Therefore, 
$\mathrm{GR}_{k}(\mathcal{C}_3:\mathcal{C}_s)\leq s$.

To show that $\mathrm{GR}_{k}(\mathcal{C}_3:\mathcal{C}_s)\geq s$, where $3\leq k\leq {s-1\choose \lceil (s-1)/2\rceil}+1$, we give an exact $k$-coloring $c'$ of $\mathcal{B}_{s-1}$ as follows.  First, we can find $k-1$ different sets $X_1, X_2, \ldots, X_{k-1}$ in ${[s-1] \choose \lfloor(s-1)/2\rfloor}$ and let $c'(X_i)=i+1$ for $1\leq i\leq k-1$, $c'(X)\in [2, k]$ for each $X\in {[s-1] \choose \lfloor(s-1)/2\rfloor}\setminus \{X_i : 1\leq i\leq k-1\}$. Then  let $c'(Y)=1$ for  $Y\in \mathcal{B}_{s-1}\setminus {[s-1] \choose \lfloor(s-1)/2\rfloor}$. 
It is clear that there exists neither a rainbow induced copy of $\mathcal{C}_3$ nor a monochromatic induced copy of $\mathcal{C}_s$ in the exact $k$-coloring $c'$ of $\mathcal{B}_{s-1}$, and hence $\mathrm{GR}_{k}(\mathcal{C}_3:\mathcal{C}_s)= s$.

When $k> {s-1\choose \lceil (s-1)/2\rceil}+1$, we show that the pair of posets $\mathcal{C}_3$ and $\mathcal{C}_s$ is $(\mathcal{C}_3 : \mathcal{C}_s)_k $-good.
Since $k> {s-1\choose \lceil (s-1)/2\rceil}+1\geq 3$, it follows from Claim \ref{claim-1} that for any $N$ with $N\geq s$ and any exact $k$-coloring of the sets in $\mathcal{B}_{N}$, there exists a rainbow induced copy of $\mathcal{C}_3$ or a monochromatic induced copy of $\mathcal{C}_{s}$.

Then, consider each $N \leq s-1$. Since 
$k > \binom{s-1}{\lceil (s-1)/2 \rceil} + 1 \ge \binom{N}{\lceil N/2 \rceil} + 1$,
it follows from Corollary~\ref{cor:upper-bound-k} that every exact $k$-coloring of $\mathcal{B}_N$
contains a rainbow induced copy of $\mathcal{C}_3$.
Therefore, the pair of posets $\mathcal{C}_3$ and $\mathcal{C}_s$ is $(\mathcal{C}_3 : \mathcal{C}_s)_k $-good for all
$k > \binom{s-1}{\lceil (s-1)/2 \rceil} + 1.$
\end{proof}

\newtheorem*{mainthm4}{\rm\bf Theorem~\ref{cBGR-v2}}
\begin{mainthm4}[Restated]
Let $k, s$ be two integers with $k\geq 3$ and $s\geq 2$. Then 
\begin{itemize}
    \item[] $(1)$ $\mathrm{GR}_{3}(\vee_2:\mathcal{C}_s)=2s-1$.
     \item[] $(2)$ {The pair of posets \(\vee_2\) and \(\mathcal{C}_s\) is \((\vee_2:\mathcal{C}_s)_4\)-good for \(s\in\{2,3\}\), and \(\operatorname{GR}_{4}(\vee_2:\mathcal{C}_s)=s\) for all \(s\ge 4\).}  
    \item[] $(3)$ The pair of $\vee_2, \mathcal{C}_s$ is $(\vee_2:\mathcal{C}_s)_k$-good if $k\geq 5$.
\end{itemize}
\end{mainthm4}
\begin{proof}
$(1)$ For the lower bound, we can give an exact $3$-coloring $c_0$ of $\mathcal{B}_{2s-2}$ such that there exists neither a rainbow induced copy of $\vee_2$ nor a monochromatic induced copy of $\mathcal{C}_s$, as follows. Let $c_0(X)=1$ for each $X$ with $0\leq |X|\leq s-2$, $c_0(X)=2$ for each $X$ with $s-1\leq |X|\leq 2s-3$, and $c_0([2s-2])=3$. Then there is neither a rainbow induced copy of $\vee_2$ nor a monochromatic induced copy of $\mathcal{C}_s$.

For the upper bound, we suppose that there exists neither a rainbow induced copy of $\vee_2$ nor a monochromatic induced copy of $\mathcal{C}_s$ in some exact $3$-coloring $c$ of $\mathcal{B}_{N}$ for some $N\geq 2s-1$. Since there exists no rainbow induced copy of $\vee_2$, it follows from Theorem \ref{Structural-V2-1} that 
   one of the following conditions holds:
 \begin{itemize}
    \item[$(i)$] There exists a set $A$ with $|A|\leq n-2$ such that
    all sets in the families $\mathcal{B}_{(A, [N])}, \mathcal{B}_N\setminus \mathcal{B}_{[A, [N]]},\{A\}$ are monochromatically colored so that no two sets from different families share the same color. Moreover, the color of $[N]$ is unrestricted.
    \item[$(ii)$] Exactly two colors appear in $\mathcal{B}_{[\emptyset, [N])}$. The set $[N]$ receive a color, distinct from the two used in $\mathcal{B}_{[\emptyset, [N])}$.
\end{itemize}
When $(i)$ holds, we may fix such a set $A$ and find an element $a\in A$.
Thus $a\in X$ for each $X\in \mathcal{B}_{[A, [N]]}$ and $a\not\in X$ for each $X\in \mathcal{B}_{[\emptyset, [N]\setminus\{a\}]}$, and hence $\mathcal{B}_{[A, [N]]}\cap \mathcal{B}_{[\emptyset, [N]\setminus\{a\}]}=\emptyset$.
Then all sets in $\mathcal{B}_{[\emptyset, [N]\setminus\{a\}]}$ are monochromatically colored. Therefore, $\mathcal{B}_{[\emptyset, [N]\setminus\{a\}]}$ contains a monochromatic induced copy of $\mathcal{C}_s$, a contradiction. 
When $(ii)$ holds, there exists a $(2s-1)$-chain with at most $2$ colors, and hence there is a monochromatic induced copy of $\mathcal{C}_s$, a contradiction.  
Therefore,  $\mathrm{GR}_{k}(\vee_2:\mathcal{C}_s)=2s-1$.

(2) 
{It is easy to see that the pair of posets \(\vee_2\) and \(\mathcal{C}_s\) is \((\vee_2:\mathcal{C}_s)_4\)-good for \(s\in\{2,3\}\). Then, we consider the case $s\geq 4$.}
For the lower bound, we can give an exact $k$-coloring $c_0$ of $\mathcal{B}_{s-1}$ such that there exists neither a rainbow induced copy of $\vee_2$ nor a monochromatic induced copy of $\mathcal{C}_s$, as follows. Let $c_0(\{1\})=2$, $c_0(X)=3$ for each $X\in \mathcal{B}_{(\{1\}, [s-1])}$, $c_0(X)=1$ for each $X\in \mathcal{B}_{s-1}\setminus
 \mathcal{B}_{[\{1\}, [s-1]]}$, and $c_0([s-1])=4$. Note that $s\geq 4$ implies $\mathcal{B}_{(\{1\}, [s-1])}\neq \emptyset$. Then there is neither a rainbow induced copy of $\vee_2$ nor a monochromatic induced copy of $\mathcal{C}_s$.
 
For the upper bound, we suppose that there exists neither a rainbow induced copy of $\vee_2$ nor a monochromatic induced copy of $\mathcal{C}_s$ in some exact $4$-coloring $c$ of $\mathcal{B}_{N}$ for some $N\geq s$. Since there exists no rainbow induced copy of $\vee_2$, it follows from Theorem \ref{Structural-V2-1} that 
 there exists a set $A$ such that all sets in the families $\mathcal{B}_{(A, [N])}, \mathcal{B}_n\setminus \mathcal{B}_{[A, [N]]},\{A\}$ are monochromatically colored so that no two sets from different families share the same color.
By the same argument in $(1)$, we can find an element
$a\in A$ such that $\mathcal{B}_{[\emptyset, [N]\setminus\{a\}]}$ contains a monochromatic induced copy of $\mathcal{C}_s$, a contradiction. Therefore,  $\mathrm{GR}_{k}(\vee_2:\mathcal{C}_s)=s$.

(3) Since $k\geq 5$, it follows from Theorem \ref{Structural-V2-1} that for any $N$ and any exact $k$-coloring of the sets in $\mathcal{B}_{N}$, there exists a rainbow induced copy of $\vee_2$, and hence the pair of posets $\vee_2$ and $\mathcal{C}_s$ is $(\vee_2:\mathcal{C}_s)_k$-good.
\end{proof}

\newtheorem*{mainthm5}{\rm\bf Theorem~\ref{gr2m}}
\begin{mainthm5}[Restated]
Let $k, n$ be two integers with $k\geq 4$ and $n\geq 1$. 
\begin{itemize}
    \item[] $(1)$ If $4\leq k\leq 2^{\mathrm{R}_3\left(\mathcal{B}_n\right)+n}$, then $\mathrm{GR}_k(\mathcal{B}_2:\mathcal{B}_n) \leq \mathrm{R}_3\left(\mathcal{B}_n\right)+n$.
    \item[] $(2)$ If $k>2^{\mathrm{R}_3\left(\mathcal{B}_n\right)+n}$, then the pair of posets $\mathcal{B}_2, \mathcal{B}_n$ is $(\mathcal{B}_2:\mathcal{B}_n)_k$-good.
\end{itemize} 
\end{mainthm5}
\begin{proof}
(1) Since $4\leq k\leq 2^{\mathrm{R}_3\left(\mathcal{B}_n\right)+n}$, there always exist exact $k$-coloring of $\mathcal{B}_n$ for any $N\ge \mathrm{R}_3(\mathcal{B}_n)+n$.
To prove $\mathrm{GR}_k(\mathcal{B}_2:\mathcal{B}_n)\le \mathrm{R}_3(\mathcal{B}_n)+n$, assume for contradiction that for some $N\ge \mathrm{R}_3(\mathcal{B}_n)+n$ there exists an exact $k$-coloring $c$ of $\mathcal{B}_N$ which contains neither a rainbow induced copy of $\mathcal{B}_2$ nor a monochromatic induced copy of $\mathcal{B}_n$.

Suppose that $c(\emptyset)=c([N])$. By Theorem~\ref{Structural-B2-1}~(2), the coloring $c$ is of Type~5. Thus, the maximum length of any rainbow chain in $\mathcal{B}_N$ is at most $4$. From Proposition \ref{empty-n-k}, we have
$$
k \le \binom{N}{\lfloor N/2\rfloor}+\binom{N-1}{\lfloor (N-1)/2\rfloor}+\binom{N-2}{\lfloor (N-2)/2\rfloor}+1.
$$

If there exists a rainbow $4$-chain $(W, X, Y, [N])$  in $\mathcal{B}_N$, it follows from Type~5 (1) that all sets in the families $\mathcal{B}_{[\emptyset, W)}$, $\mathcal{B}_{(W, Y)}$, and $\mathcal{B}_{(Y, [N]]}$ are monochromatically colored.
It is easy to see that $\mathrm{R}_3(\mathcal{B}_n)\geq 3n$. Thus $N\geq 4n$, and hence one of $\mathcal{B}_{[\emptyset, W)}$, $\mathcal{B}_{(W, Y)}$, and $\mathcal{B}_{(Y, [N]]}$ contains a monochromatic induced copy of $\mathcal{B}_n$, a contradiction.

Therefore, the maximum length of any rainbow chain in $\mathcal{B}_N$ is at most $3$.
Let $W$ be a set with $c(W)\neq c(\emptyset)$ such that each set in $\mathcal{B}_{[\emptyset, W)}$ is colored $c(\emptyset)$.
Then from Type 5 (2), there exist at most $3$ colors in $\mathcal{B}_{[W, [N]]}$.
If $|W|\ge n$, then $\mathcal{B}_{[\emptyset, W)}\cup\{[N]\}$ contains a monochromatic induced copy of $\mathcal{B}_n$, a contradiction. If $|W|< n$, i.e., $N-|W|\ge \mathrm{R}_3(\mathcal{B}_n)$, then there exists a monochromatic induced copy of $\mathcal{B}_n$ in $\mathcal{B}_{[W, [N]]}$, also a contradiction.

Suppose that $c(\emptyset)\neq c([N])$, then Theorem~\ref{Structural-B2-1}~(1) holds. In Types 1, 2, 3-1, 3-2, and 4-1, there exists a set $X_0$ such that there exist at most $3$ colors in $\mathcal{B}_N\setminus \mathcal{B}_{[X_0, [N]]}$. Note that $X_0\neq \emptyset$, and hence  $\mathcal{B}_N\setminus \mathcal{B}_{[X_0, [N]]}$ contains an induced copy of $\mathcal{B}_{N-1}$. 
Since $N-1\geq \mathrm{R}_3(\mathcal{B}_n)$, $\mathcal{B}_N\setminus \mathcal{B}_{[X_0, [N]]}$ contains a monochromatic induced copy of $\mathcal{B}_n$, a contradiction.

 In Type 4-2, there exists a set $Y_0$ such that there exist at most $3$ colors in $\mathcal{B}_N\setminus \mathcal{B}_{[\emptyset, Y_0]}$. Similarly, $\mathcal{B}_N\setminus \mathcal{B}_{[\emptyset, Y_0]}$ contains a monochromatic induced copy of $\mathcal{B}_n$, a contradiction. 
Hence no such coloring $c$ exists, proving $\mathrm{GR}_k(\mathcal{B}_2:\mathcal{B}_n)\le \mathrm{R}_3(\mathcal{B}_n)+n$.

(2) 
For any positive intger $N$, consider exact $k$-coloring of $\mathcal{B}_N$. Since $k>2^{\mathrm{R}_3\left(\mathcal{B}_n\right)+n}$, we have $N\geq \mathrm{R}_3(\mathcal{B}_n)+n$. By the same argument in (1), we can prove that for any $N$ and any exact $k$-coloring of the sets in $\mathcal{B}_{N}$, there exists either a rainbow induced copy of $\mathcal{B}_2$ or a monochromatic induced copy of $\mathcal{B}_n$, and hence the pair of posets $\mathcal{B}_2, \mathcal{B}_n$ is $(\mathcal{B}_2:\mathcal{B}_n)_k$-good.
\end{proof}

\section{Application to rainbow Ramsey theory}\label{sec_4}

We are now in a position to give an upper bound for $\operatorname{RR}(\mathcal{B}_m:\mathcal{B}_{n})$.
 \newtheorem*{mainthm6}{\rm\bf Theorem~\ref{rainbow-Boolean-m-n}}
\begin{mainthm6}[Restated]\label{thm:RR-upper}
Let \(m,n\) be positive integers. Then
\(\operatorname{RR}(\mathcal{B}_m:\mathcal{B}_{n})\le m\operatorname{R}_{2^m-1}(\mathcal{B}_{n})+m.\)
\end{mainthm6}

We first provide a sufficient condition for a coloring of \(\mathcal{B}_N\) without rainbow induced copy of \(\mathcal{B}_m\).

\begin{lemma}\label{lem:no-rainbow-Bm-structure}
Let \(m\) and \(n_0\) be two positive integers. Let \(N=mn_0+m\), and define
\(R_{\ell}=[m+(\ell-1)n_0+1,\, m+\ell n_0]\) for \(1\le \ell\le m\).
For \(1\le j\le m\) and \(1\le i_1<\cdots<i_j\le m\), define the Boolean sublattice
\[
X_{i_1,i_2,\ldots,i_j}
=\mathcal{B}_{\bigl[\{i_1,i_2,\ldots,i_j\}\cup R_1\cup\cdots\cup R_{j-1},\;
\{i_1,i_2,\ldots,i_j\}\cup R_1\cup\cdots\cup R_{j}\bigr]}
\subseteq \mathcal{B}_N .
\]
For any coloring \(c\) of \(\mathcal{B}_N\), if \(\mathcal{B}_N\) contains no rainbow induced copy of \(\mathcal{B}_m\), then there exist \(j\) and \(1\le i_1<\cdots<i_j\le m\) such that the restriction of \(c\) to \(X_{i_1,i_2,\ldots,i_j}\) uses at most \(2^m-1\) colors.
\end{lemma}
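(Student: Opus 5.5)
We argue by contraposition: assume every sublattice \(X_{i_1,\ldots,i_j}\) uses at least \(2^m\) colors under \(c\), and build a rainbow induced copy of \(\mathcal{B}_m\) greedily. The copy will be indexed by subsets \(S\subseteq[m]\). We put \(F(\emptyset)\) somewhere in \(\mathcal{B}_{[\emptyset,R_0]}\) with \(R_0\) empty, so in practice \(F(\emptyset)=\emptyset\). For nonempty \(S=\{i_1<\cdots<i_j\}\) we pick \(F(S)\in X_{i_1,\ldots,i_j}\). Thus \(F(S)=S\cup R_1\cup\cdots\cup R_{j-1}\cup T_S\) with \(T_S\subseteq R_j\).

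The first step is to check that this map is an order embedding, whatever the choices of \(T_S\). Intersecting \(F(S)\) with \([m]\) recovers \(S\). Hence \(F(S)\le F(S')\) forces \(S\subseteq S'\). Conversely, suppose \(S\subsetneq S'\), with \(|S|=j<j'=|S'|\). Then \(F(S)\subseteq S\cup R_1\cup\cdots\cup R_j\). This set is contained in \(S'\cup R_1\cup\cdots\cup R_{j'-1}\subseteq F(S')\), since \(j\le j'-1\). So \(F(S)\le F(S')\) if and only if \(S\subseteq S'\). In particular the images are distinct, and any choice of representatives gives an induced copy of \(\mathcal{B}_m\). This layering by the blocks \(R_\ell\) is exactly what the construction is designed for.

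The second step is rainbowness, which we obtain by choosing representatives in order of increasing \(|S|\). We fix an ordering of the \(2^m\) subsets of \([m]\), beginning with \(\emptyset\). When we come to \(S\), at most \(2^m-1\) colors have already been used by the previously chosen sets. By assumption \(X_S\) contains at least \(2^m\) colors, so some element of \(X_S\) has a color not yet used, and we take that element as \(F(S)\). For \(S=\emptyset\) nothing has been used yet, so any choice works. In fact this step only needs \(\ge 2^m-t\) colors at the \(t\)-th stage, so the hypothesis \(\ge 2^m\) is more than enough. The \(2^m\) chosen sets then receive distinct colors, and we have a rainbow induced copy of \(\mathcal{B}_m\), a contradiction.

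The main thing to get right is the comparability check in the first step, in particular the case \(|S|<|S'|\) with \(S\not\subseteq S'\). There incomparability must still hold, and it does because \(S\not\subseteq S'\cap[m]=S'\). The case \(|S|=|S'|\), \(S\ne S'\) is handled by the same trace argument on \([m]\). One should also confirm that the interval defining \(X_{i_1,\ldots,i_j}\) is nonempty, which holds since \(n_0\ge1\). Once the first step is settled, the color-counting step is routine.
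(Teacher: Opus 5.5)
Your proof is correct and is essentially the paper's argument: take $\emptyset$ together with one set from each sublattice $X_S$. The block structure makes this an induced copy of $\mathcal{B}_m$ regardless of the choices, and colors are then picked greedily using the at least $2^m$ colors in each sublattice; you verify the order embedding more carefully than the paper does. The only slip is your parenthetical claim that the $t$-th stage needs only $\ge 2^m-t$ colors. When $t-1$ sets have already been chosen, one needs $\ge t$ colors in $X_S$, but this aside plays no role in the proof.
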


\begin{proof}
Suppose, to the contrary, that for every \(j\) and every \(1\le i_1<\cdots<i_j\le m\), the restriction of \(c\) to \(X_{i_1,i_2,\ldots,i_j}\) uses at least \(2^m\) colors.
First note that each \(X_{i_1,i_2,\ldots,i_j}\) is an induced copy of \(\mathcal{B}_{n_0}\), and there are \(2^m-1\) such sublattices.

Next, for two different sublattices \(X_{i_1,\ldots,i_j}\) and \(X_{i'_1,\ldots,i'_{j'}}\), we have
\(X\subseteq Y\) for any \(X\in X_{i_1,\ldots,i_j}\) and any \(Y\in X_{i'_1,\ldots,i'_{j'}}\) if and only if
\(\{i_1,\ldots,i_j\}\subseteq \{i'_1,\ldots,i'_{j'}\}\).
Therefore, by taking \(\emptyset\) together with one set selected from each sublattice \(X_{i_1,\ldots,i_j}\), we obtain an induced copy of \(\mathcal{B}_m\).
Moreover, since each \(X_{i_1,\ldots,i_j}\) contains at least \(2^m\) colors, we can choose these \(2^m\) sets so that their colors are pairwise distinct, which yields a rainbow induced copy of \(\mathcal{B}_m\), a contradiction.
This contradiction proves the lemma.
\end{proof}

We now apply Lemma~\ref{lem:no-rainbow-Bm-structure} to prove Theorem~\ref{thm:RR-upper}.
\begin{proof}[Proof of Theorem \ref{thm:RR-upper}]
Let \(n_0=\operatorname{R}_{2^m-1}(\mathcal{B}_{n})\) and \(N=mn_0+m\).
Consider an arbitrary coloring \(c\) of the sets in \(\mathcal{B}_N\).
We claim that \(c\) contains either a monochromatic induced copy of \(\mathcal{B}_n\) or a rainbow induced copy of \(\mathcal{B}_m\).

Define \(R_{\ell}=[m+(\ell-1)n_0+1,\, m+\ell n_0]\) for \(1\le \ell\le m\), and for
\(1\le j\le m\) and \(1\le i_1<\cdots<i_j\le m\) let
\[
X_{i_1, i_2, \ldots, i_j}
=\mathcal{B}_{\bigl[\{i_1, i_2, \ldots, i_j\}\cup R_1\cup \cdots \cup R_{j-1},\;
\{i_1, i_2, \ldots, i_j\}\cup R_1\cup \cdots \cup R_j\bigr]}.
\]
As in Lemma~\ref{lem:no-rainbow-Bm-structure}, each \(X_{i_1,\ldots,i_j}\) is an induced copy of \(\mathcal{B}_{n_0}\), and there are \(2^m-1\) such sublattices.

If \(\mathcal{B}_N\) contains a rainbow induced copy of \(\mathcal{B}_m\), then we are done.
Otherwise, by Lemma~\ref{lem:no-rainbow-Bm-structure} there exist \(j\) and
\(1\le i_1<\cdots<i_j\le m\) such that the restriction of \(c\) to
\(X_{i_1,\ldots,i_j}\) uses at most \(2^m-1\) colors.
Since \(n_0=\operatorname{R}_{2^m-1}(\mathcal{B}_{n})\), it follows that
\(X_{i_1,\ldots,i_j}\) contains a monochromatic induced copy of \(\mathcal{B}_n\).
Therefore \(\operatorname{RR}(\mathcal{B}_m:\mathcal{B}_{n})\le N
= m\operatorname{R}_{2^m-1}(\mathcal{B}_{n})+m\), completing the proof.
\end{proof}

\newtheorem*{mainthm7}{\rm\bf Theorem~\ref{th-Bm2}}
\begin{mainthm7}[Restated]
For the Boolean lattice $\mathcal{B}_n$ with $n\geq 1$, we have
$$
\mathrm{R}_3\left(\mathcal{B}_n\right)\leq \mathrm{RR}(\mathcal{B}_2:\mathcal{B}_n) \leq \mathrm{R}_3\left(\mathcal{B}_n\right)+n.
$$   
\end{mainthm7}
\begin{proof}
It is clear that $\mathrm{RR}(\mathcal{B}_2:\mathcal{B}_n)\geq \mathrm{R}_3\left(\mathcal{B}_n\right)$.
Let $N=\mathrm{R}_3\left(\mathcal{B}_n\right)+n$.
If at most $3$ colors are used in $\mathcal{B}_N$, 
it is clear that there exists a monochromatic induced copy of $\mathcal{B}_n$.
If at least $4$ colors are used in $\mathcal{B}_N$, then by Theorem
\ref{gr2m}, there exists either a rainbow induced copy of $\mathcal{B}_2$ or a monochromatic induced copy of $\mathcal{B}_n$.
Therefore, $\mathrm{RR}(\mathcal{B}_2:\mathcal{B}_n) \leq \mathrm{R}_3\left(\mathcal{B}_n\right)+n$.
\end{proof}

As observed by Chang, Gerbner, Li, Methuku, Nagy, Patk\'os, and Vizer \cite{CGLMNPV22}, 
if $\mathcal{P}$ is uniformly induced Lubell-bounded, then ${\rm R}_k(\mathcal{P})=k \cdot e(\mathcal{P})$ and $\operatorname{RR}(\mathcal{Q}:\mathcal{P}) \ge e(\mathcal{P})(|\mathcal{Q}| - 1) + g(\mathcal{Q})$ for each poset $\mathcal{Q}$. Note that $\mathrm{RR}(\vee_2:\mathcal{C}_1)=0$.
\newtheorem*{mainthm8}{\rm\bf Theorem~\ref{v2-p}}
\begin{mainthm8}[Restated]
Let $\mathcal{P}$ be a uniformly induced Lubell-bounded poset, other than $\mathcal{C}_1$. Then $\mathrm{RR}(\vee_2:\mathcal{P})=2e(\mathcal{P})+1$.
\end{mainthm8}
\begin{proof}
Recall that $\operatorname{RR}(\mathcal{Q}:\mathcal{P}) \ge e(\mathcal{P})(|\mathcal{Q}| - 1) + g(\mathcal{Q})$ for each poset $\mathcal{Q}$. Therefore, $\mathrm{RR}(\vee_2:\mathcal{P})\geq 2e(\mathcal{P})+1$.

To show $\mathrm{RR}(\vee_2:\mathcal{P})\leq 2e(\mathcal{P})+1$, we suppose that there exists neither a rainbow induced copy of $\vee_2$ nor a monochromatic induced copy of $\mathcal{P}$ in some coloring $c$ of $\mathcal{B}_{N}$, where $N=2e(\mathcal{P})+1$. 
If at most $2$ colors are used to color the sets in $\mathcal{B}_{N}$, then since $N>{\rm R}_2(\mathcal{P})=2e(\mathcal{P})$, there is a monochromatic induced copy of $\mathcal{P}$, a contradiction. Therefore, at least $3$ colors are used in the coloring of $\mathcal{B}_{N}$.

Since there exists no rainbow induced copy of $\vee_2$, it follows from Theorem \ref{Structural-V2-1} that 
one of the following conditions holds:
 \begin{itemize}
    \item[$(1)$] There exists a set $A$ such that
    all sets in the families $\mathcal{B}_{(A, [N])}, \mathcal{B}_n\setminus \mathcal{B}_{[A, [N]]},\{A\}$ are monochromatically colored so that no two sets from different families share the same color.
If $k=3$, then the color of $[N]$ is unrestricted; if $k=4$, then $[N]$ receive a color different from those in $\mathcal{B}_{[\emptyset, [N])}$.
    \item[$(2)$] Exactly two colors appear in $\mathcal{B}_{[\emptyset, [N])}$. The set $[N]$ receive a color, distinct from the two used in $\mathcal{B}_{[\emptyset, [N])}$.
\end{itemize}
When $(1)$ holds, we may fix such a set $A$ and find an element $a\in A$.
Thus $a\in X$ for each $X\in \mathcal{B}_{[A, [N]]}$ and $a\not\in X$ for each $X\in \mathcal{B}_{[\emptyset, [N]\setminus\{a\}]}$, and hence $\mathcal{B}_{[A, [N]]}\cap \mathcal{B}_{[\emptyset, [N]\setminus\{a\}]}=\emptyset$.
Then all sets in $\mathcal{B}_{[\emptyset, [N]\setminus\{a\}]}$ are monochromatically colored. 
Since 
$|[N]\setminus\{a\}|
=2e(\mathcal{P})\geq  \operatorname{Lu}_{N-1}(\mathcal{P})$, it follows that $\mathcal{B}_{[\emptyset, [N]\setminus\{a\}]}$ contains a monochromatic induced copy of $\mathcal{P}$, a contradiction. 

When $(2)$ holds. There use two colors for coloring of $\mathcal{B}_{N}\setminus\{N\}$. Since ${\rm R}_2(\mathcal{P})=2 e(\mathcal{P})=N-1$,  there is a monochromatic induced copy of $\mathcal{P}$, a contradiction. 
Therefore, $\mathrm{RR}(\vee_2:\mathcal{P})= 2e(\mathcal{P})+1$.
\end{proof}

\end{document}